\theoremstyle{plain}
    \newtheorem{thm}{Theorem}[section]
    \newtheorem{cor}[thm]   {Corollary}
    \newtheorem{prop}[thm]  {Proposition}
    \newtheorem{remark}[thm] {Remark}
    \newtheorem{lem}[thm]   {Lemma}
    \newtheorem{atheorem}{Theorem}
    \newtheorem{acorollary}{Corollary}
\theoremstyle{definition}
    \newtheorem{defn}[thm]  {Definition}
    \newtheorem{nota}[thm]{Notation}
\newcommand{\cA}{\mathcal{A}} 
\newcommand{\fb}{\mathfrak{b}} 
\newcommand{\C}{\mathbb{C}} 
\newcommand{\mD}{\mathring{D}} 
\newcommand{\F}{\mathbb{F}} 
\newcommand{\cF}{\mathcal{F}} 
\newcommand{\bH}{\mathbb{H}} 
\newcommand{\cH}{\mathcal{H}} 
\newcommand{\bP}{\mathbb{P}} 
\newcommand{\Q}{\mathbb{Q}} 
\newcommand{\R}{\mathbb{R}} 
\newcommand{\cS}{\mathcal{S}} 
\newcommand{\fS}{\mathfrak{S}} 
\newcommand{\fw}{\mathfrak{w}} 
\newcommand{\Z}{\mathbb{Z}} 
\newcommand{\Br}{\mathfrak{Br}} 
\newcommand{\Hur}{\mathrm{Hur}}
\newcommand{\Hn}{\mathrm{Hn}}
\newcommand{\qq}{\sslash} 
\DeclareSymbolFontAlphabet{\mathbb}{AMSb}
\DeclareSymbolFontAlphabet{\mathbbl}{bbold}
\newcommand{\one}{\mathbbl{1}} 
\newcommand{\ua}{\underline{a}} 
\newcommand{\ualpha}{\underline{\alpha}} 
\newcommand{\pa}[1]{\left(#1\right)}
\newcommand{\set}[1]{\left\{#1\right\}}
\DeclarePairedDelimiter\floor{\lfloor}{\rfloor}
\renewcommand{\phi}{\varphi}
\renewcommand{\epsilon}{\varepsilon}
\DeclareMathOperator{\Aut}{Aut} 
\DeclareMathOperator{\Hom}{Hom} 
\newcommand{\Tor}{\mathrm{Tor}}
\DeclareMathOperator{\lst}{lst} 
\DeclareMathOperator{\rst}{rst} 
\newcommand{\Spec}{\mathrm{Spec}}
\begin{document}

\title[Polynomial stability of Hurwitz spaces]{Polynomial stability of the homology of Hurwitz spaces}

\author{Andrea Bianchi}
\email{anbi@math.ku.dk}
\address{Department of Mathematical Sciences, University of Copenhagen \newline
Universitetsparken 5, Copenhagen, 2100, Denmark}  

 \author{Jeremy Miller}
  \email{jeremykmiller@purdue.edu}
\address{Purdue University, Department of Mathematics\newline
150 North University, West Lafayette IN, 47907, USA}

\subjclass[2020]{
20F36, 
55R80, 
55T05, 
55U10, 
55U15 
}

\date{\today}

\begin{abstract}
For a finite group $G$ and a conjugation-invariant subset $Q\subseteq G$, we consider the Hurwitz space $\mathrm{Hur}_n(Q)$ parametrising branched covers of the plane with $n$ branch points, monodromies in $G$ and local monodromies in $Q$. For $i\ge0$ we prove that $\bigoplus_n H_i(\mathrm{Hur}_n(Q))$ is a finitely generated module over the ring $\bigoplus_n H_0(\mathrm{Hur}_n(Q))$. As a consequence, we obtain polynomial stability of homology of Hurwitz spaces: taking homology coefficients in a field, the dimension of $H_i(\mathrm{Hur}_n(Q))$ agrees for $n$ large enough with a quasi-polynomial in $n$, whose degree is easily bounded in terms of $G$ and $Q$. Under suitable hypotheses on $G$ and $Q$, we prove classical homological stability for certain sequences of components of Hurwitz spaces.
Our results generalise previous work of Ellenberg-Venkatesh-Westerland, and rely on techniques introduced by them and by Hatcher-Wahl.
\end{abstract}

\maketitle
 
\section{Introduction}
Let $G$ be a finite group and let $Q\subseteq G$ be a conjugation-invariant subset.
For $n\ge0$, the Hurwitz space $\Hur_n(Q)$ considered in this article is
a certain homotopy quotient of the set $Q^n$ of $n$-tuples of elements in $Q$ by an action of the braid group $\Br_n$: see Definition \ref{defn:Hurwitz}. The homotopy type of $\Hur_n(Q)$ coincides with that of the moduli space of certain ``decorated'' branched covers of the complex plane $\C$; see Subsection \ref{subsec:connectionalggeo} for more details, and for the link between the Hurwitz spaces of this article and the Hurwitz spaces usually considered in algebraic geometry.

We are broadly interested in stability properties of homology of components of Hurwitz spaces. Suitable stability results for $H_i(\Hur_n(Q))$ can find applications in enumerative number theory: the main instance of this is are the work of Ellenberg-Venkatesh-Westerland on the Cohen-Lenstra heuristics \cite{EVW:homstabhur} and the work of Ellenberg-Tran-Westerland on the Malle conjecture \cite{ETW:shufflealgebras}.
The results of this article are an attempt to generalise the topological part of \cite{EVW:homstabhur}, as we do not require $Q\subseteq G$ to be a single conjugacy class with the ``non-splitting property'', and we consider homology in any Noetherian ring $R$; yet we have not been able to find a counterpart to the nice linear stability ranges from \cite{EVW:homstabhur}, making our results of a more qualitative than quantitative nature; in Subsection \ref{subsec:connectionalggeo} we briefly explain why this prevents applications of our results in enumerative number theory.

\subsection{Statement of results}
The disjoint union $\Hur(Q)=\coprod_{n\ge0}\Hur_n(Q)$ has a natural structure of topological monoid,\footnote{The multiplication is strict in the model of Hurwitz spaces that we use; in other natural models one would only have an $E_1$-algebra.} which is recalled in Subsection \ref{subsec:topmonoid}.
As a consequence, $H_*(\Hur(Q))=\bigoplus_{i\ge0,n\ge0}H_i(\Hur_n(Q))$ admits a natural structure of bigraded ring, where the two gradings are the homological degree $i$ and the ``weight'' $n$. In particular the direct sum
\[
 A=H_0(\Hur(Q))=\bigoplus_{n\ge0}H_0(\Hur_n(Q))
\]
admits a structure of graded ring, and for all $i\ge0$ and commutative ring $R$, the direct sum $H_i(\Hur(Q);R)=\bigoplus_{n\ge0}H_i(\Hur_n(Q);R)$ admits a structure of $A\otimes R$-bimodule, in particular of left $A\otimes R$-module. Our first main result is the following.
\begin{atheorem}
 \label{thm:main1}
 Let $R$ be a Noetherian commutative ring. For $i\ge0$, the module $H_i(\Hur(Q);R)$ is finitely generated over $A\otimes R=H_0(\Hur(Q);R)$.
\end{atheorem}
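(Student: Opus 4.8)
The plan is to reduce the theorem to a statement of \emph{generation in bounded weight}, and then to establish that statement by combining a Hatcher--Wahl style high-connectivity argument with an induction on homological degree in the spirit of Ellenberg--Venkatesh--Westerland.

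The first step is a finiteness observation in each fixed weight. Since $Q^n$ is a finite set and $\Hur_n(Q)$ is a homotopy quotient of $Q^n$ by the braid group action, the space $\Hur_n(Q)$ has finitely many path components, each of which is a $K(\Gamma,1)$ for $\Gamma$ the $\Br_n$-stabiliser of a point of $Q^n$; as a finite-index subgroup of $\Br_n$ such a $\Gamma$ is of type $F_\infty$, so $H_i(\Hur_n(Q);R)$ is a finitely generated $R$-module for every $i$ and $n$. It follows that, to prove Theorem~\ref{thm:main1}, it suffices to exhibit for each $i$ a bound $N=N(i)$ such that $H_i(\Hur(Q);R)$ is generated as an $A\otimes R$-module by classes of weight at most $N$: only finitely many weights $n\le N$ then contribute, and each contributes a finitely generated $R$-module. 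Writing $(A\otimes R)_{>0}$ for the positive-weight part, this is equivalent to showing that the module of indecomposables $\Tor^{A\otimes R}_0(R,H_i(\Hur(Q);R))=H_i(\Hur(Q);R)\big/(A\otimes R)_{>0}\cdot H_i(\Hur(Q);R)$ is concentrated in bounded weight.

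To control these indecomposables I would introduce, for each $n$, an augmented semisimplicial space $\mathcal Z_\bullet\to\Hur_n(Q)$ of arc/splitting type: a $p$-simplex is a point of $\Hur_n(Q)$ together with a choice of $p+1$ disjoint embedded arcs splitting off $p+1$ of the branch points towards the boundary, and the face maps forget arcs. The monoid structure on $\Hur(Q)$ identifies each space of $p$-simplices, up to homotopy, with a product of a lower-weight Hurwitz space and a standard factor recording the split-off strands, so that on homology the associated chain complex is built from modules induced up from lower weight. Assembling these complexes over all $n$ yields a resolution of $H_*(\Hur(Q);R)$ by such induced modules, so that the connectivity of $\mathcal Z_\bullet$ translates directly into the vanishing in large weight of the indecomposables described above.

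The main obstacle is to prove that $\mathcal Z_\bullet$, equivalently the underlying complex of arcs, is highly connected with connectivity growing linearly in $n$, and crucially that this holds uniformly for an arbitrary conjugation-invariant $Q$ rather than only for a single non-splitting conjugacy class; this is the analogue of the Hatcher--Wahl connectivity theorem, and the passage to general $Q$ is where the combinatorics demands the most work. Granting this connectivity, I would feed it into the resulting spectral sequence and argue by induction on the homological degree $i$: the base case $i=0$ is the tautology that $A\otimes R=H_0(\Hur(Q);R)$ is cyclic, generated in weight $0$, while in the inductive step the relevant $E^1$-page is assembled from the modules $H_{i'}(\Hur(Q);R)$ with $i'<i$, finitely generated by the inductive hypothesis. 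At this point the Noetherian hypothesis on $R$ enters, ensuring that finite generation is inherited by the subquotients on the $E^\infty$-page from the finitely generated $E^1$-page, and hence that $H_i(\Hur(Q);R)$ is generated in weight at most some $N(i)$. This completes the induction and the proof.
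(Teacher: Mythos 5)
Your overall architecture does match the paper's: reduce to generation in bounded weight (using that each $H_i(\Hur_n(Q);R)$ is a finitely generated $R$-module, since $\Hur_n(Q)$ is a finite cover of $B\Br_n$ up to homotopy), resolve $\Hur_n(Q)$ by the Hatcher--Wahl arc-complex semisimplicial space, identify the $p$-simplices with $Q^{p+1}\times\Hur_{n-p-1}(Q)$, and induct on homological degree using the resulting spectral sequence of $A\otimes R$-modules. However, you misplace the difficulty. The connectivity of the resolution is not "the main obstacle" and does not depend on $Q$ at all: the semisimplicial object is $\cA_{n,\bullet}\times Q^n$ with the diagonal $\Br_n$-action, so the $Q$-decoration is a product factor carried along passively, and contractibility of $|\cA_{n,\bullet}|$ is a known theorem, quoted in the paper (Lemma \ref{lem:damiolini}) from work of Damiolini and Hatcher--Vogtmann. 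The failure of the non-splitting property for general $Q$ is an \emph{algebraic} difficulty, not a combinatorial one about arcs.

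The genuine gap is in your inductive step. You claim that because the rows of the $E^1$-page are built from the finitely generated modules $H_{i'}(\Hur(Q);R)$, $i'\le \nu$, Noetherianity "ensures that finite generation is inherited by the subquotients on the $E^\infty$-page \dots and hence that $H_i(\Hur(Q);R)$ is generated in weight at most some $N(i)$." The "hence" is a non-sequitur. Noetherianity does give that each $E^2_{p,q}=H_p(K_*(H_q(\Hur(Q);R)))$ is a finitely generated $A\otimes R$-module, but a finitely generated $A\otimes R$-module (for instance $A\otimes R$ itself) can be nonzero in every weight; so finite generation alone yields no weight bound on the $E^2$-terms in rows $q\le\nu$, and without such a bound you cannot conclude that the differential $d^1_{0,\nu+1}\colon E^1_{0,\nu+1}\to E^1_{-1,\nu+1}$ is surjective in high weight, since higher differentials emanating from those rows could instead be what kills $E^2_{-1,\nu+1}$, which is exactly the module of indecomposables you need to vanish. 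The missing ingredient is the paper's Lemma \ref{lem:trivialaction} (adapted from Ellenberg--Venkatesh--Westerland): for any $A\otimes R$-bimodule $M$ with a $G$-twist — as $H_q(\Hur(Q);R)$ has, by Lemma \ref{lem:Gtwist} — the homology of the Koszul-like complex $K_*(M)$ is a \emph{trivial} right $A\otimes R$-module, i.e.\ annihilated by all positive-weight elements. This is proved by the explicit chain homotopy $(a_0,\dots,a_p)\otimes\mu\mapsto(-1)^{p+1}(a_0,\dots,a_p,a)\otimes\mu^a$, which uses the conjugation action of $G$ and the identity $m\cdot[a]=[a]\cdot m^a$; it is also the reason the right-module structure (coming from the monoid acting on the right of the semisimplicial space, Proposition \ref{prop:rightaction}) is the relevant one, with the $G$-twist reconciling left and right finite generation. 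Triviality plus Noetherian finite generation forces each such $E^2_{p,q}$ to be finitely generated over the weight-zero subring $R$, hence concentrated in bounded weight, and only then does convergence of the spectral sequence to zero deliver the surjectivity of $d^1_{0,\nu+1}$ in high weight. Your proposal has no counterpart to this step, and without it the induction does not close.
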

A quantitative consequence of the previous theorem is the following
corollary: see Definition \ref{defn:kGQ} for the constant $k(G,Q)\ge1$. We will also make use of the notion of quasi-polynomials, see Subsection \ref{subsec:quasipolynomials}.

\begin{acorollary}
\label{cor:main1}
 Let $\F$ be a field and let $i\ge0$. Let $\ell\ge1$ be such that $a^\ell=\one\in G$ for all $a\in Q$. Then there is a quasi-polynomial $p^\F_i(t)$
 of degree at most $k(G,Q)-1$ and period dividing $\ell$ such that, for $n$ large enough,
 $\dim_{\F}H_i(\Hur_n(Q);\F)=(p^\F_i)_{[n]_\ell}(n)$.
\end{acorollary}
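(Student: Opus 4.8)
The plan is to derive the corollary from Theorem~\ref{thm:main1} by translating finite generation of the module into a statement about the Hilbert function of a graded module over a graded ring, and then invoking the general theory of Hilbert quasi-polynomials. So the first step is to understand the graded ring $A\otimes\F=H_0(\Hur(Q);\F)$. Since $H_0(\Hur_n(Q))$ is the free abelian group on the set $\pi_0(\Hur_n(Q))$ of components, the ring $A$ is the $\Z$-algebra spanned by path components of the monoid $\Hur(Q)$, with multiplication induced by concatenation. I expect this to be a commutative (or at least well-behaved) ring of Krull dimension controlled by $k(G,Q)$; concretely, I would identify $\pi_0(\Hur_n(Q))$ with orbits of $Q^n$ under the braid action, i.e.\ with the ``components'' counted by the monoid of connected components, and show that $A\otimes\F$ is a finitely generated $\F$-algebra whose weight-grading has a Hilbert quasi-polynomial of degree exactly $k(G,Q)-1$.

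The second step is the reduction to commutative algebra. By Theorem~\ref{thm:main1}, $H_i(\Hur(Q);\F)$ is a finitely generated graded module over $A\otimes\F$. The weight grading $n$ is the relevant grading, and $\dim_\F H_i(\Hur_n(Q);\F)$ is precisely the Hilbert function of this graded module in weight $n$. The key general fact I would invoke is that for a finitely generated graded module over a finitely generated graded $\F$-algebra generated in possibly several degrees, the Hilbert function agrees for large $n$ with a quasi-polynomial, whose period divides the lcm of the generator degrees and whose degree is one less than the Krull dimension of the module (hence bounded by the dimension of the ring). So I would need two inputs: (i) $A\otimes\F$ is Noetherian and finitely generated as an $\F$-algebra, so that finitely generated modules have well-defined Hilbert quasi-polynomials, and (ii) a bound on the Krull dimension of $A\otimes\F$, namely that it is at most $k(G,Q)$, which gives the degree bound $k(G,Q)-1$.

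For the period, I expect the number $\ell$ with $a^\ell=\one$ for all $a\in Q$ to control the ``generators in each weight'' of the ring $A$: the relevant stabilizing elements of $A$ (the analogues of the elements $U$ in \cite{EVW:homstabhur} whose multiplication implements stabilization) will live in weights that are multiples of a fixed number dividing $\ell$, so that the Hilbert function becomes genuinely polynomial along each residue class modulo $\ell$. Concretely, I would pin down central elements of $A\otimes\F$ in weight dividing $\ell$ and argue that localizing or passing to the associated graded along these elements makes the Hilbert function eventually agree with a quasi-polynomial of period dividing $\ell$; the notation $[n]_\ell$ in the statement indicates that the coefficients of the quasi-polynomial depend only on $n \bmod \ell$.

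The main obstacle I anticipate is establishing the precise degree bound $k(G,Q)-1$ rather than merely \emph{some} polynomial degree. Pure finite generation over $A\otimes\F$ only gives that the Hilbert function is eventually quasi-polynomial of degree at most $\dim_{\F\text{-Krull}}(A\otimes\F)-1$, so everything hinges on computing (or sharply bounding) the Krull dimension of the component ring $A\otimes\F$ and matching it to the combinatorial quantity $k(G,Q)$ from Definition~\ref{defn:kGQ}. I would expect this to require an explicit description of $A$ in terms of the monoid structure on $\pi_0$ of Hurwitz spaces -- likely relating it to stable components indexed by pairs (a subgroup/subset generated by the monodromies, a boundary-monodromy invariant) -- and a dimension count showing that the number of algebraically independent weight-graded generators is exactly $k(G,Q)$. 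The secondary difficulty is verifying that $A\otimes\F$ is Noetherian so that the Hilbert quasi-polynomial machinery applies; this should follow from $A$ being a finitely generated $\Z$-algebra, which in turn I would extract from finiteness of $G$ and the combinatorial model for $\pi_0$.
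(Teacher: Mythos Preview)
Your outline is essentially the paper's argument, and you have correctly identified both obstacles. One clarification is important: the ring $A$ is \emph{not} commutative in general (the relation is $[q_i][q_j]=[q_j][q_i^{q_j}]$), so speaking of its Krull dimension or invoking Hilbert-function theory directly over $A\otimes\F$ is delicate. The paper resolves this exactly along the lines of your ``central elements in weight dividing $\ell$'' remark: one introduces the subring $B\subseteq A$ generated by the elements $[q_i]^\ell$, checks that these are central (Lemma~\ref{lem:relationsB}), that $B$ is commutative and concentrated in weights divisible by $\ell$, and that $A$ is a finitely generated $B$-module (Lemma~\ref{lem:Anoetherian}). Thus $H_i(\Hur(Q);\F)$ is finitely generated over the commutative graded $\F$-algebra $B\otimes\F$, and standard Hilbert-function theory applies; the period bound $\ell$ is then automatic.

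For the degree bound, the paper does not compute a Krull dimension abstractly but instead proves directly (Proposition~\ref{prop:Agrowth}) that $\dim_\Z B_{\ell n}$ agrees with a polynomial of degree exactly $k(G,Q)-1$ for large $n$, via an explicit combinatorial upper and lower count on $\pi_0^\ell(\Hur(Q))$ using the invariants of Subsection~\ref{subsec:invcomponents}. This is precisely the ``dimension count showing that the number of algebraically independent weight-graded generators is exactly $k(G,Q)$'' that you anticipated, carried out at the level of the monoid $\pi_0^\ell(\Hur(Q))$ rather than via a structural description of stable components.
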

We note that the condition $k(G,Q)=1$ is the ``non-splitting property'' for  the couple $(G,Q)$, introduced in \cite{EVW:homstabhur}: if $k(G,Q)=1$ then $Q$ is a single conjugacy class in $G$ and for every subgroup $H\subseteq G$, the intersection $Q\cap H$ is either empty or a single conjugacy class in $H$. In this case, $\dim_{\F}H_i(\Hur_n(Q);\F)$ is eventually equal to a quasi-polynomial in $n$ of degree $0$, i.e. it eventually coincides with a periodic function of $n$: this is an abstract version of homology stability, in the sense that for $n,n'$ large enough with respect to $i$, if $n-n'$ is divisible by $\ell$ then the vector spaces $H_i(\Hur_n(Q);\F)$ and $H_i(\Hur_{n'}(Q);\F)$ are abstractly isomorphic, but no explicit isomorphism is provided. We remark that in the case $k(G,Q)=1$, Theorem \ref{thm:main1} is proved in \cite{EVW:homstabhur}, under the additional hypotheses that homology is taken with coefficients in $\Q$ (or at least $\Z[\frac 1{|G|}]$): more precisely, for a suitable constant $D\ge1$ an explicit map $H_i(\Hur_n(Q);\Q)\to H_i(\Hur_{n+D}(Q);\Q)$ is constructed, and for suitable constants $A,B\ge1$ this map is shown to be an isomorphism provided that $n\ge Ai+B$, i.e. a linear stability range is proved.
The given stable isomorphism $H_i(\Hur_n(Q);\Q)\to H_i(\Hur_{n+D}(Q);\Q)$ is defined as a \emph{sum} of maps induced in homology by topological stabilisation maps $\Hur_n(Q)\to \Hur_{n+D}$.

The philosophy that stability results should be formulated in terms of finiteness results as modules over algebras was popularized by the field of representation stability (see e.g. \cite{CEF:FImodules, CEFN:FImodules}). As is common in representation stability, we leverage the fact that the algebra governing stability $A\otimes R$ is Noetherian. This helps us analyze a certain spectral sequence constructed using an arc complex introduced by Hatcher-Wahl \cite{hatcherwahl}. These spectral sequences also appeared in work of Ellenberg-Venkatesh-Westerland \cite{EVW:homstabhur}: our viewpoint on stability allows us to analyze this spectral sequence in new cases, in particular when the non-splitting property of $Q\subseteq G$ fails. We achieve this by considering at the same time several possible stabilisation maps (algebraically, this corresponds to working over polynomial rings in several variables), an idea appearing already in \cite{ADCK:edgestab,KMT:extremal}.

For $\omega \in G$ let us denote by $\Hur(Q)_\omega\subset\Hur(Q)$ the subspace characterised by the total monodromy being equal to $\omega$ (see Definition \ref{defn:totmon}).
The ring $A=H_0(\Hur(Q))$ contains a subring $A_\one=H_0(\Hur(Q)_\one)$, and
for $\omega \in G$, multiplication in $H_*(\Hur(Q))$ makes the submodule $H_i(\Hur(Q)_\omega)\subset H_i(\Hur(Q))$ into a left $A_\one$-module, for all $i\ge0$. Our second main result is the following.
\begin{atheorem}
\label{thm:main2}
Let $R$ be a Noetherian commutative ring. Let $i\ge0$ and let $\omega\in G$; then $H_i(\Hur(Q)_\omega;R)$ is finitely generated over $A_\one\otimes R=H_0(\Hur(Q)_\one;R)$.
\end{atheorem}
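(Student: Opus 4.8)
The plan is to deduce Theorem \ref{thm:main2} from Theorem \ref{thm:main1} by exploiting the monoid decomposition of $\Hur(Q)$ according to total monodromy. First I would observe that the total monodromy defines a map of monoids $\Hur(Q) \to G$, where $G$ is viewed as a discrete monoid under multiplication; this induces a decomposition $\Hur(Q) = \coprod_{\omega \in G} \Hur(Q)_\omega$ as topological spaces, compatible with the monoid structure in the sense that multiplication carries $\Hur(Q)_{\omega} \times \Hur(Q)_{\omega'}$ into $\Hur(Q)_{\omega\omega'}$. Taking homology, this realises $H_*(\Hur(Q);R)$ as a $G$-graded ring, with $A_\one = H_0(\Hur(Q)_\one;R)$ the degree-$\one$ part of the subring $A \otimes R$ in homological degree $0$, and each $H_i(\Hur(Q)_\omega;R)$ a graded piece that is in particular an $A_\one \otimes R$-submodule.

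The key algebraic step is then a Noetherianity-and-finiteness argument. By Theorem \ref{thm:main1}, $H_i(\Hur(Q);R)$ is finitely generated as a module over $A \otimes R$. I would like to conclude that each $G$-homogeneous summand $H_i(\Hur(Q)_\omega;R)$ is finitely generated over the identity component $A_\one \otimes R$. For this I would invoke that $A \otimes R$ is itself finitely generated as a module over its subring $A_\one \otimes R$: indeed $A = H_0(\Hur(Q))$ is a finitely generated $A_\one$-module because the total monodromy takes only finitely many values in the finite group $G$, and $A_\one$ already contains enough elements (coming from $0$-dimensional homology classes of tuples with trivial total monodromy) to generate each $H_0(\Hur(Q)_\omega)$ over it — this last claim requires checking and is where I expect to spend effort. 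Granting it, $A \otimes R$ is module-finite over the Noetherian ring $A_\one \otimes R$, hence $A_\one \otimes R$ is Noetherian and $A \otimes R$ is Noetherian as an $A_\one \otimes R$-module.

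Combining these, $H_i(\Hur(Q);R)$ is a finitely generated $A \otimes R$-module, and $A \otimes R$ is a finitely generated $A_\one \otimes R$-module, so by transitivity $H_i(\Hur(Q);R)$ is finitely generated over $A_\one \otimes R$. Since $A_\one \otimes R$ is Noetherian and $H_i(\Hur(Q)_\omega;R)$ is an $A_\one \otimes R$-submodule of the finitely generated module $H_i(\Hur(Q);R)$, it follows that $H_i(\Hur(Q)_\omega;R)$ is itself finitely generated over $A_\one \otimes R$, which is the assertion of the theorem.

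The main obstacle I anticipate is justifying that $A \otimes R$ is module-finite over $A_\one \otimes R$ — equivalently, that each $H_0(\Hur(Q)_\omega)$ is finitely generated over $A_\one$. The subtlety is that an arbitrary product of elements of $Q$ with total monodromy $\omega$ need not factor through a configuration with trivial total monodromy in a way that is visible at the level of $H_0$; one must argue combinatorially with the braid-group orbits on $Q^n$ that every path component of $\Hur_n(Q)_\omega$ is reachable, up to multiplication by a fixed finite set of generators landing in components with total monodromy $\omega$, from the monoid $A_\one$. A clean way to organise this is to fix a finite generating set of $Q$-tuples realising each value $\omega \in G$ and each residue class of $n$ modulo the relevant period, and then show these finitely many elements generate $A$ over $A_\one$; the finiteness of $G$ is what makes this a finite list.
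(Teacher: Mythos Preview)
Your approach is essentially the same as the paper's: deduce the result from Theorem \ref{thm:main1} by showing that $A\otimes R$ is module-finite over $A_\one\otimes R$, then pass to the $\omega$-graded piece of the finite direct sum $H_i(\Hur(Q);R)=\bigoplus_{g\in G}H_i(\Hur(Q)_g;R)$. The obstacle you flag is resolved in the paper (Lemma \ref{lem:Anoetherian}) via an intermediate commutative subring $B\subseteq A_\one$ generated by the elements $[q_i]^\ell$: a pigeonhole argument shows that any monomial in the $[q_i]$ of length exceeding $|Q|(\ell-1)$ contains $\ell$ copies of some $[q_j]$, which the relations $[q_i][q_j]=[q_j][q_i^{q_j}]$ allow one to gather as a left factor $[q_j]^\ell\in B$, proving $A$ is finite over $B$ and a fortiori over $A_\one$.
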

A quantitative consequence of the previous theorem is the following
(see Definition \ref{defn:kGQomega} for the constant $k(G,Q,\omega)\ge1$).
\begin{acorollary}
 \label{cor:main2}
 Let $\F$ be a field and let $i\ge0$. Let $\ell\ge1$ be such that $a^\ell=\one\in G$ for all $a\in Q$. Then there is a quasi-polynomial $p^\F_{i,\omega}$
 of degree at most $k(G,Q,\omega)-1$ and period dividing $\ell$ such that, for $n$ large enough,
 $\dim_{\F}H_i(\Hur_n(Q)_\omega;\F)=(p^\F_{i,\omega})_{[n]_\ell}(n)$.
\end{acorollary}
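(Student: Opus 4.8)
The plan is to deduce Corollary \ref{cor:main2} from Theorem \ref{thm:main2} by standard, if delicate, commutative algebra of graded modules. Write $S=A_\one\otimes\F=H_0(\Hur(Q)_\one;\F)$ and $M=H_i(\Hur(Q)_\omega;\F)$. Both are graded by the weight, with $S_0=H_0(\Hur_0(Q)_\one;\F)=\F$ and $M_n=H_i(\Hur_n(Q)_\omega;\F)$, and $M$ is a graded $S$-module. By Theorem \ref{thm:main2}, $M$ is finitely generated over $S$. The statement to prove is therefore purely about the Hilbert function $h_M(n)=\dim_\F M_n$: that $h_M$ agrees, for $n$ large, with a quasi-polynomial of degree at most $k(G,Q,\omega)-1$ and period dividing $\ell$, in the sense of Subsection \ref{subsec:quasipolynomials}.

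First I would record that $S$ is well behaved. Each $A_\omega=H_0(\Hur(Q)_\omega)$ is finitely generated over $A_\one$ by Theorem \ref{thm:main2} with $i=0$, and since $G$ is finite the $G$-grading $A=\bigoplus_{\omega\in G}A_\omega$ exhibits $A$ as module-finite over $A_\one$; as $A\otimes\F$ is Noetherian, Eakin--Nagata gives that $A_\one\otimes\F=S$ is Noetherian. Being a Noetherian $\N$-graded $\F$-algebra with $S_0=\F$, it is a finitely generated $\F$-algebra. Consequently $\dim M$, the Krull dimension of $S/\!\operatorname{Ann}(M)$, is finite and the Hilbert function of $M$ is eventually a quasi-polynomial of degree $\dim M-1$.

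The two quantitative inputs are then as follows. For the degree I would reduce to the inequality $\dim M\le k(G,Q,\omega)$, which is the dimension-theoretic content that Definition \ref{defn:kGQomega} is designed to encode: $k(G,Q,\omega)$ is a combinatorial avatar of the dimension of the $\omega$-sector over $A_\one\otimes\F$, and the point is to check that the support of $M=H_i(\Hur(Q)_\omega;\F)$ in $\Spec S$ has dimension at most $k(G,Q,\omega)$. For the period I would produce a homogeneous system of parameters for $M$ whose members have degree dividing $\ell$. The natural candidates are the \emph{stabilisation classes}: for $a\in Q$ the weight-$\ell$ tuple $(a,\dots,a)$ has total monodromy $a^\ell=\one$, hence defines a class $[a^{\times\ell}]\in S_\ell$. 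I would show that finitely many such degree-$\ell$ classes, via their role in the stabilisation maps of \cite{EVW:homstabhur,hatcherwahl}, cut out the irrelevant ideal on the support of $M$, i.e.\ form a system of parameters of degree $\ell$. Feeding an h.s.o.p.\ $\theta_1,\dots,\theta_d$ with $d=\dim M\le k(G,Q,\omega)$ and $\deg\theta_j\mid\ell$ into the Hilbert--Serre theorem expresses the Hilbert series of $M$ as $P(t)/\prod_{j=1}^d(1-t^{\deg\theta_j})$ with $P\in\Z[t,t^{-1}]$; a partial-fractions argument then yields that $h_M$ is eventually a quasi-polynomial of degree $d-1\le k(G,Q,\omega)-1$ and period dividing $\mathrm{lcm}_j(\deg\theta_j)\mid\ell$, which is precisely the claim.

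The main obstacle I anticipate is the control of the period, i.e.\ producing the system of parameters in degree dividing $\ell$. Finite generation over $S$ only guarantees an h.s.o.p.\ in \emph{some} degrees, and a priori the Veronese subalgebra $S^{(\ell)}=\bigoplus_m S_{m\ell}$ --- over which $S$ is automatically integral, since $s^\ell\in S^{(\ell)}$ for homogeneous $s$, hence module-finite --- need not be generated in its lowest degree $\ell$. Thus the heart of the argument is the geometric input that the classes $[a^{\times\ell}]$ already generate enough of $S^{(\ell)}$ on the support of $M$, which is where a Conway--Parker-type stabilisation statement for the components of $\Hur(Q)$ must enter; matching this to the combinatorial constant $k(G,Q,\omega)$ of Definition \ref{defn:kGQomega} is the step requiring the most care.
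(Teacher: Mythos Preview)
Your framework is right --- reduce to a Hilbert-function statement for a finitely generated graded module --- but you have inverted the difficulty of the two ingredients and are missing the one genuinely new idea.

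\medskip
\textbf{Period.} You identify the period bound as ``the main obstacle'', worrying that the Veronese $S^{(\ell)}$ might not be generated in its lowest degree. But the paper already provides exactly the subring you want: $B\subset A_\one$ is by definition generated by the classes $[a]^\ell$, $a\in Q$ (Definition \ref{defn:B}), hence is a quotient of a polynomial ring on $|Q|$ generators in degree $\ell$; and Lemma \ref{lem:Anoetherian} shows $A_\one$ is module-finite over $B$. Combined with Theorem \ref{thm:main2}, this makes $M=H_i(\Hur(Q)_\omega;\F)$ finitely generated over $B\otimes\F$, and the period $\mid\ell$ follows immediately from Hilbert--Serre. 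No Conway--Parker input, no analysis of $S^{(\ell)}$, is needed.

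\medskip
\textbf{Degree.} This is where the real work lies, and your proposal does not contain the key idea. You write that $k(G,Q,\omega)$ is ``a combinatorial avatar of the dimension of the $\omega$-sector'' and that ``matching this to the combinatorial constant \dots\ is the step requiring the most care'', but give no mechanism for why the support of $M$ should see at most $k(G,Q,\omega)$ independent parameters. Finite generation over $B$ alone only gives $\dim M\le |Q|$, which recovers Corollary \ref{cor:main1} but not Corollary \ref{cor:main2}.

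The paper's mechanism is Proposition \ref{prop:HiBomegamodule}: on $\Hur(Q)_\omega$ one has $\lst(a)^\ell\simeq\rst(a)^\ell\simeq\rst(a^\omega)^\ell\simeq\lst(a^\omega)^\ell$ (via Lemmas \ref{lem:Gtwist} and \ref{lem:lstarstag}), so that $[a]^\ell$ and $[a^\omega]^\ell$ act identically on $M$. The $B\otimes\F$-action therefore factors through the explicit quotient $B_\omega\otimes\F=B\otimes\F/([a]^\ell-[a^\omega]^\ell:a\in Q)$. This is the step that uses $\omega$ at all; nothing in your outline does. Proposition \ref{prop:Bomegagrowth} then bounds $\dim B_\omega\le k(G,Q,\omega)$ by an elementary normal-form count in the monoid $\pi_0^\ell(\Hur(Q))$ modulo the extra $\omega$-conjugation relations: for any subgroup $H$ generated by elements of $Q$, adjoining conjugation by $\omega$ collapses the $H$-conjugacy classes in $Q\cap H$ to $\langle H,\omega\rangle$-conjugacy classes, of which there are at most $k(G,Q,\omega)$. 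Feeding this into Hilbert--Serre gives the degree bound.

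\medskip
In short: drop the Eakin--Nagata and Veronese detours, work directly over $B$, and supply the missing relation $[a]^\ell\equiv[a^\omega]^\ell$ on $H_i(\Hur(Q)_\omega;\F)$, which is what actually ties $\omega$ to the bound $k(G,Q,\omega)$.
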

We compare Theorems \ref{thm:main1} and \ref{thm:main2} through the following example: let $G=\fS_d$ be the symmetric group on $d$ elements, and let $Q\subset G$ denote the conjugacy class of transpositions. Then $k(G,Q)=\floor{d/2}$, and for $d\ge4$ Theorem \ref{thm:main1} only predicts quasi-polynomial growth in $n$ of the Betti numbers $\dim_{\F}H_i(\Hur_n(Q);\F)$; in particular these Betti numbers can attain arbitrarily high values for $n\to\infty$.
However, if $\omega=(1,\dots,d)$ is the standard \emph{long cycle}, then $k(G,Q,\omega)=1$ and Theorem \ref{thm:main2} predicts that the Betti number $\dim_{\F}H_i(\Hur_n(Q)_\omega;\F)$ eventually coincides with a periodic function of $n$.

Our third, main result is a classical homological stability result for certain sequences of components of Hurwitz spaces.
\begin{atheorem}
 \label{thm:main3}
 Let $R$ be a commutative ring. Assume that $Q\subset G$ is a single conjugacy class and that there is an element $\omega\in G$ which is \emph{large} with respect to $Q$ (see Definition \ref{defn:omegalarge}). Let $i\ge0$, and let $\ell\ge1$ be such that $a^\ell=\one\in G$ for all $a\in Q$.
 For $a\in Q$ denote by $\lst(a)$ the map $\Hur(Q)\to\Hur(Q)$ induced by left multiplication by a point in $\Hur_1(Q)_a$; see also Definition \ref{defn:stabmap}.\footnote{In our model for Hurwitz spaces, $\Hur_1(Q)_a$ consists of precisely one point.}
Then for $n$ sufficiently large compared to $i$ the stabilisation map
\[
 \lst(a)^\ell_*\colon H_i(\Hur_n(Q)_\omega;R)\to H_i(\Hur_{n+\ell}(Q)_\omega;R)
\]
is independent of $a\in Q$ and is an isomorphism.
\end{atheorem}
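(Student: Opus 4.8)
The plan is to prove the statement by the classical homological stability machinery of Randal-Williams--Wahl and Hatcher--Wahl, applied to the semisimplicial resolution of $\Hur_n(Q)_\omega$ by the arc complexes that already underlie the spectral sequences used for Theorems \ref{thm:main1} and \ref{thm:main2}. The stabilisation map $\lst(a)^\ell_*$ is multiplication by the weight-$\ell$ class $x_a=[(a,\dots,a)]\in H_0(\Hur_\ell(Q)_\one;R)$; the exponent $\ell$ is forced because a single $\lst(a)$ shifts the total monodromy from $\omega$ to $a\omega$, whereas $a^\ell=\one$ returns us to the component $\Hur_{n+\ell}(Q)_\omega$. Since I want an isomorphism over an arbitrary commutative ring $R$, I would avoid any Betti-number counting argument and instead deduce surjectivity and injectivity of $\lst(a)^\ell_*$ directly from a connectivity estimate, using Corollary \ref{cor:main2} only as a consistency check: largeness of $\omega$ gives $k(G,Q,\omega)=1$, hence eventual periodicity of the Betti numbers, which is exactly what a stable isomorphism predicts.

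First I would fix $a\in Q$ and set up the destabilisation complex: an augmented semisimplicial space over $\Hur_n(Q)_\omega$ whose $p$-simplices record $p+1$ disjoint arcs along which one may split off a factor $x_a$, so that the $E^1$-page of the associated spectral sequence has entries built from $H_*(\Hur_{n-(p+1)\ell}(Q)_\omega;R)$ with $d^1$ a signed sum of iterated stabilisation maps $\lst(a)^\ell_*$. Granting a connectivity bound of the shape ``the complex is roughly $\tfrac{n}{\ell}$-connected'', the standard double induction on $(i,n)$ then runs: surjectivity of $\lst(a)^\ell_*$ in degree $i$ for $n$ large follows from the vanishing of the relevant $E^\infty$-term, and injectivity follows from surjectivity one homological degree and one stabilisation step lower, exactly as in the Randal-Williams--Wahl template. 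Because this argument uses only the connectivity of a space, it is valid with coefficients in any commutative ring and produces an explicit stable isomorphism rather than an abstract one.

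Next I would address independence of $a$. As $Q$ is a single conjugacy class, any $a'\in Q$ can be written $a'=gag^{-1}$, and the largeness hypothesis guarantees enough room to realise the conjugating word inside the component indexed by $\omega$. Using the Hurwitz (braid) relations to slide the conjugating letters past the stabilised block of $\ell$ copies of $a$, one checks that $\lst(a)^\ell_*$ and $\lst(a')^\ell_*$ agree on $H_i(\Hur_n(Q)_\omega;R)$ once $n$ is in the stable range; this is the usual argument that the stabilisation is independent of the chosen stabilising object.

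The main obstacle is the connectivity estimate for the arc/destabilisation complex in the component $\Hur_n(Q)_\omega$, together with the bookkeeping that every destabilisation keeps us inside this same component. This is precisely where both hypotheses are essential: that $Q$ be a single conjugacy class, and that $\omega$ be large, which forces $k(G,Q,\omega)=1$ and thereby collapses the several stabilising directions available in the proof of Theorem \ref{thm:main2} to a single one, so that multiplication by the one element $x_a$ is genuinely cofinal. I expect the connectivity to be extracted from the high connectivity of the Hatcher--Wahl arc complex by a link/colimit argument, with largeness (Definition \ref{defn:omegalarge}) ensuring that the local monodromy data can always be completed to a point of the $\omega$-component; quantifying this connectivity linearly in $n$ is the step that controls the threshold ``$n$ sufficiently large compared to $i$''.
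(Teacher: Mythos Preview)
Your proposal takes a genuinely different route from the paper, and the central step you flag as ``the main obstacle'' is in fact a real gap that the paper does not attempt to fill.

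The paper does \emph{not} prove Theorem~\ref{thm:main3} by constructing a destabilisation complex for the stabilisation $\lst(a)^\ell$ and establishing a connectivity bound. Instead, it deduces Theorem~\ref{thm:main3} \emph{algebraically} from Theorem~\ref{thm:main2}, as follows. First assume $R$ Noetherian. By Theorem~\ref{thm:main2} and Lemma~\ref{lem:Anoetherian}, $H_i(\Hur(Q)_\omega;R)$ is a finitely generated $B\otimes R$-module. The key step (Lemma~\ref{lem:HiCmodule}) is purely ring-theoretic: for any class $x=[a]^\ell y$ one has both $[b]^\ell x=[b^\omega]^\ell x$ (Proposition~\ref{prop:HiBomegamodule}) and $[b]^\ell x=[b^a]^\ell x$ (relations in $B$), and since $\left<\omega,a\right>=G$ and $Q$ is a single conjugacy class, this forces $[b]^\ell x=[c]^\ell x$ for all $b,c\in Q$. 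Hence in weights above a generating bound $\bar n$, the $B\otimes R$-action factors through the quotient $C\otimes R$, where $C=B/([q_i]^\ell-[q_j]^\ell)\cong\Z[t]$. A finitely generated module over the Noetherian ring $R[t]$ is finitely presented, so multiplication by $[a]^\ell$ is eventually an isomorphism, and by construction this map is independent of $a$. The general ring $R$ is then handled by running the argument over $\Z$ and invoking universal coefficients.

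Your outline, by contrast, requires an augmented semisimplicial space over $\Hur_n(Q)_\omega$ whose $p$-simplices are $\Hur_{n-(p+1)\ell}(Q)_\omega$ and whose geometric realisation is roughly $n/\ell$-connected. No such complex appears in the paper: the arc complex $\cA_{n,\bullet}$ of Section~\ref{sec:arccomplex} removes points one at a time, has $p$-simplices $Q^{p+1}\times\Hur_{n-p-1}(Q)$ (not restricted to the $\omega$-component, and not stratified by blocks of $\ell$ identical labels), and is \emph{contractible} rather than linearly connected. Passing from this to the complex you describe is not a routine link argument: one must isolate, inside a configuration with arbitrary $Q$-labels, embedded ``blocks of $\ell$ copies of $a$'', and it is unclear why the resulting complex should be highly connected, or even why its face maps land back in the $\omega$-component without further argument. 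You correctly identify this as the crux, but you do not resolve it, and the paper gives no tools for doing so.

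Your independence-of-$a$ argument is closer in spirit to what the paper actually does (it is essentially Lemma~\ref{lem:HiCmodule}), but note that the paper uses this identity as the \emph{main engine} of the proof, not as a secondary observation after an isomorphism has been established. The moral is that once Theorem~\ref{thm:main2} is in hand, no further geometry is needed: largeness collapses the several generators $[q_i]^\ell$ of $B$ to a single one on the relevant module, reducing everything to finitely generated modules over a polynomial ring in one variable.
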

Theorem \ref{thm:main3} applies for instance to the following settings:
\begin{itemize}
 \item $G=\fS_p$ for a prime number $p$, $Q$ is the conjugacy class of transpositions, and $\omega=(1,\dots,p)$ is the long cycle;
 \item $G=\Z/d\rtimes\Z/2$ for $d$ odd, $Q$ is the conjugacy class of involutions, and $\omega$ is a generator of $\Z/d$.
\end{itemize}
We remark that Tietz \cite[Theorem 2]{Tietz} also obtains a homology stability result (with an explicit linear stable range) for the integral homology of certain components of Hurwitz spaces, also generalising \cite{EVW:homstabhur}; even though our notion of ``large element of a group with respect to a conjugacy class'' seems comparable with his notion of ``collection of conjugacy classes that invariably generate a group'', we consider our result rather disjoint from his result.

Theorem \ref{thm:main3} can be applied together with the group-completion theorem in order to compute some stable homology groups, giving a partial, positive answer to \cite[Conjecture 1.5]{EVW:homstabhur}.
\begin{nota}
 For $\alpha\in\pi_0(\Hur(Q))$, we denote by $\Hur_\alpha(Q)\subseteq\Hur(Q)$ the connected component $\alpha$.
\end{nota}
\begin{acorollary}
\label{cor:main3}
Let $R$ be a commutative ring.
Let $G,Q,\omega$ be as in Theorem \ref{thm:main3}, and let $a\in Q$.
Fix $\alpha\in\pi_0(\Hur(Q)_\omega)\subseteq\pi_0(\Hur(Q))$.
Then for $n$ sufficiently large compared with $i$ the map induced by the group completion induces a homology isomorphism
\[
 H_i(\Hur_{\hat a^{\ell n}\alpha};R)\cong H_i(\Omega_0 B\Hur(Q);R),
\]
where $\Omega_0 B\Hur(Q)$ denotes the zero component of the group completion of $\Hur(Q)$.
\end{acorollary}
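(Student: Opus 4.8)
The plan is to combine the homological stability of Theorem \ref{thm:main3} with the group-completion theorem. Write $M=\Hur(Q)$ for the topological monoid and recall that the group-completion theorem of McDuff--Segal identifies the Pontryagin ring of the group completion: the natural map $M\to\Omega BM$ induces an isomorphism
\[
H_*(M;R)[\pi_0(M)^{-1}]\xrightarrow{\ \cong\ }H_*(\Omega BM;R),
\]
where the left-hand side is the localisation of the Pontryagin ring at the multiplicative set $\pi_0(M)$. Passing to a fixed component of $\Omega BM$ amounts to taking the part of this localisation lying over a prescribed class in the Grothendieck group $\mathrm{Gr}(\pi_0(M))=\pi_0(\Omega BM)$; in particular $H_*(\Omega_0BM;R)$ is the part lying over $0$.

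First I would realise the homology of a component of $\Omega BM$ as a telescope. Fix the component $\alpha\in\pi_0(\Hur(Q)_\omega)$ and consider the mapping telescope of the iterated left stabilisation
\[
\Hur_\alpha(Q)\xrightarrow{\lst(a)^\ell}\Hur_{\hat a^\ell\alpha}(Q)\xrightarrow{\lst(a)^\ell}\Hur_{\hat a^{2\ell}\alpha}(Q)\xrightarrow{\lst(a)^\ell}\cdots.
\]
Its homology is $\operatorname{colim}_n H_*(\Hur_{\hat a^{\ell n}\alpha}(Q);R)$, which is the localisation of $\bigoplus_\beta H_*(\Hur_\beta(Q);R)$ at the single element $\hat a^\ell$, restricted to the components reachable from $\alpha$. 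Since $a^\ell=\one$, each space in the telescope has total monodromy $\omega$, so after translating by the invertible class $[\alpha]^{-1}$ in the grouplike space $\Omega BM$ the telescope maps to the zero component $\Omega_0BM$.

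Next I would check that this telescope computes all of $H_*(\Omega_0BM;R)$ and not merely a localisation at one element. Here the hypotheses of Theorem \ref{thm:main3} --- that $Q$ is a single conjugacy class and that $\omega$ is large with respect to $Q$ --- are used to guarantee that $\hat a^\ell$ is cofinal: every class over $0\in\mathrm{Gr}(\pi_0(M))$ relevant to the $\alpha$-component is reached, up to stabilisation, by further multiplication by $\hat a^\ell$, so inverting $\hat a^\ell$ already inverts enough of $\pi_0(M)$. This is also the step where I would verify the centrality hypothesis of the group-completion theorem: the independence of $\lst(a)^\ell_*$ from $a\in Q$ in Theorem \ref{thm:main3}, together with the braiding interchanging left and right stabilisation in the stable range, shows that $\hat a^\ell$ acts centrally on $H_*(M;R)$ in the range where stability holds, which is what the theorem requires.

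Finally, Theorem \ref{thm:main3} asserts that the maps $\lst(a)^\ell_*$ are isomorphisms once $n$ is large compared with $i$. Therefore the colimit defining $H_i$ of the telescope is attained at a finite stage, giving
\[
H_i(\Hur_{\hat a^{\ell n}\alpha}(Q);R)\cong\operatorname*{colim}_m H_i(\Hur_{\hat a^{\ell m}\alpha}(Q);R)\cong H_i(\Omega_0BM;R)
\]
for all sufficiently large $n$, which is the assertion. I expect the main obstacle to be the middle step: reconciling the single-element localisation computed by the $\hat a^\ell$-telescope with the full group completion, i.e. proving cofinality of $\hat a^\ell$ and the centrality needed to apply the group-completion theorem. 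This is precisely where the largeness of $\omega$ and the single-conjugacy-class hypothesis, rather than merely the stability statement, must be brought to bear.
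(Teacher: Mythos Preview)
Your overall strategy is the paper's: apply the group-completion theorem, realise the zero component of $\Omega B\Hur(Q)$ as a telescope, and invoke Theorem \ref{thm:main3} to see that the telescope stabilises. Two points need sharpening, and the paper's argument shows exactly how.

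First, the Ore condition required by the group-completion theorem is a global hypothesis on $H_*(\Hur(Q);R)$, not something that holds only ``in the stable range'', and it does not come from Theorem \ref{thm:main3}. In fact $\hat a^\ell$ is already central in $\pi_0(\Hur(Q))$ for every $a\in Q$ (since $\hat b\cdot\hat a^\ell=\hat a^\ell\cdot\widehat{b^{a^\ell}}=\hat a^\ell\cdot\hat b$), so $[a]^\ell$ is central in the Pontryagin ring. The paper verifies the Ore condition for the \emph{full} multiplicative set $\pi_0(\Hur(Q))$ using the $G$-twist on $H_*(\Hur(Q);R)$ together with the relation between $\lst$ and $\rst$; this is structural and has nothing to do with stability.

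Second, your cofinality step is the genuine gap. Inverting the single element $\hat a^\ell$ does not, on the whole ring, invert all of $\pi_0(\Hur(Q))$, so you cannot directly identify your $\hat a^\ell$-telescope with the group completion. The paper avoids this by first passing to the element $\fw=\hat q_1^\ell\cdots\hat q_m^\ell$: since each $\hat q_i^\ell$ is central and divides $\fw$, inverting $\fw$ inverts every $\hat q_i^\ell$ and hence every $\hat q_i$, so $H_*(\Hur(Q);R)[\fw^{-1}]$ agrees with the full localisation at $\pi_0(\Hur(Q))$. Thus the $\fw$-telescope based at $\alpha$ already computes $H_i(\Omega_0 B\Hur(Q);R)$. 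Theorem \ref{thm:main3} then enters only to compare telescopes: on $\Hur(Q)_\omega$ and for large $n$, each $\lst(q_j)^\ell_*$ equals $\lst(a)^\ell_*$ and is an isomorphism, so multiplication by $[\fw]$ equals $(\lst(a)^\ell_*)^m$. The $\fw$-telescope is therefore an index-$m$ cofinal subdiagram of your $\hat a^\ell$-telescope, the two colimits agree, and your final paragraph finishes the proof.
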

The rational homology of a component $\Omega_0 B\Hur(Q)$ of $\Omega B\Hur(Q)$ is $\Q$ in degrees 0 and 1, and 0 in all other degrees. This statement is first implicitly claimed in \cite[Conjecture 1.5]{EVW:homstabhur}, and a strategy of proof is contained in \cite[Subsection 5.6]{EVW:homstabhurII}, where it is shown that the statement follows from \cite[Theorem 2.8.1]{EVW:homstabhurII}. However, we believe that the proof of \cite[Theorem 2.8.1]{EVW:homstabhurII} contains a gap, which probably can be fixed. We refer to \cite[Corollary 5.4]{ORW:Hurwitz} and \cite[§6.5]{Bianchi:Hur3} for alternative proofs of the statement.

\begin{remark}
 After a first version of this article was circulated, Davis-Schlank \cite[Proposition 3.36]{DavisSchlank} proved that for a field $\F$ and for a finitely generated, weighted $A\otimes\F$-module $M$, the dimensions $\dim_\F(M_n)$ agree, for $n$ large, with a \emph{polynomial} in $n$ of degree bounded as in Corollary \ref{cor:main1}. Our Theorem \ref{thm:main1} allows to apply this result to $M=H_i(\Hur(Q);\F)$ and thus improve the statement of Corollary \ref{cor:main1} by replacing the word ``quasi-polynomial'' with the word ``polynomial''.
\end{remark}

\subsection{Connections to algebraic geometry and enumerative number theory}
\label{subsec:connectionalggeo}
The space $\Hur_n(Q)$ considered in this article is homotopy equivalent to the certain moduli space of regular branched covers of the complex plane $p\colon \cF\to\C$ endowed with the following data:
\begin{enumerate}
 \item an identification of the deck transformation group $\Aut(\cF,p)$ with $G$;
 \item a trivialisation over a suitable lower half-plane $\bH\subset\C$ of the restricted $G$-principal bundle $p^{-1}(\bH)\cong G\times \bH$, up to replacing $\bH$ by smaller and smaller half-planes,
\end{enumerate}
such that there are precisely $n$ branch points, and such that local monodromies around the branch points have values in $Q$. A branched cover is ``regular'' if its group of deck transformations acts transitively on each fibre; we use the term ``$G$-cover'' for a regular branched cover endowed with a decoration as (1) above.

Romagny-Wewers \cite{RomagnyWewers} describe how to construct, for $n\ge0$ and a finite group $G$, a scheme $\cH_{n,G}$ of finite type over $\Spec(\Z)$ whose associated analytic space $\cH_{n,G}(\C)$ can be identified with the moduli space of branched $G$-covers as above, but with $\bP^1_\C$ as target,
without the decoration (2), and without the restriction that local monodromies be in $Q$. The construction already appears in Wewers' PhD thesis \cite{Wewers}, and it is preceeded by work of Fulton \cite{Fulton} and of Fried-V\"olklein \cite{FriedVoelklein}: Fulton constructs for $d\ge1$ and $n\ge0$ a scheme $\cH_{d,n}$ of finite type over $\Spec(\Z)$ whose complex points isomorphism classes of degree-$d$ simple branched covers of $\bP^1_\C$ with $n$ branch points, and Fried-V\"olklein construct the basechange $\cH_{n,G}\times_{\Spec(\Z)}\Spec(\Q)$, which is of finite type over $\Spec(\Q)$.

Ellenberg-Venkatesh-Westerland \cite{EVW:homstabhur} show that at least when $Q\subset G$ is a \emph{rational} conjugation-invariant subset (that is, for all $a\in Q$ and all $n\ge1$ coprime with the order of $a$ in $G$, one has $a^n\in Q$), then Wewers' Hurwitz scheme $\cH_{n,G}$ contains a subscheme $\cH_{n,G}^Q$ whose complex points are isomorphism classes of branched $G$-covers of $\bP^1_\C$ with local monodromies lying in $Q$. Similarly, assuming again that $Q$ is rational, they show the existence of a subscheme $\Hn_{G.n}^Q$ of $\cH_{n+1,G}$
whose complex points are isomorphism classes of branched $G$-covers of $\C$ with local monodromies in $Q$.
The only missing ``decoration'' is (2) in the list above, namely a trivialisation of the $G$-cover over some lower half-plane; it follows that $\Hn^Q_{G,n}(\C)$ should be thought of as corresponding to the quotient $\Hur_n(Q)/G$, where $G$ acts by global conjugation on the set $Q^n$, the action is compatible with that of $\Br_n$, and hence we obtain an induced action of $G$ on $\Hur_n(Q)$. It would be interesting to know, without any restriction on $Q\subseteq G$, whether there exists a scheme $\tilde{\Hn}^Q_{G,n}$, possibly of finite type over $\Spec(\Z)$ (and ideally, an affine scheme) whose associated analytic space $\tilde{\Hn}^Q_{G,n}(\C)$ is homotopy equivalent to $\Hur_n(Q)$.

In \cite{EVW:homstabhur}, rational homological stability for $\Hur_n(Q)$ is proved under the non-splitting hypothesis, and an \emph{explicit linear stability range} is provided. This leads to an exponential upper bound on the size of the rational cohomology of $\Hur_n(Q)$ that does not depend on $n$: there is a constant $C>0$ such that in each degree $i\ge0$ one has $\dim_\Q H^i(\Hur_n(Q);\Q)<C^i$. This in turn leads to a similar upper bound on the size of $H^i(\Hur_n(Q)/G;\Q)\cong H^i(\Hn_{G,n}^Q(\C);\Q)$, when $Q$ is a rational conjugacy class of $G$. For a finite field $\F_q$ with algebraic closure $\bar\F_q$, the latter upper bound can be used to control the size of the \'etale cohomology of the basechange $\Hn_{G,n}^Q\times_{\Spec(\Z)}\Spec(\bar\F_q)$. One can then use the Grothendieck-Lefschetz trace formula to express the $|\Hn^Q_{G,n}(\F_q)|$ as the alternating sum of the traces of the $\F_q$-Frobenius acting on the \'etale cohomology of $\Hn_{G,n}^Q\otimes_{\Spec(\Z)}\Spec(\bar\F_q)$. The above bounds on the dimension of the \'etale cohomology groups, together with the Deligne bounds on the size of the eigenvalues of the Frobenius, give an estimate of $|\Hn^Q_{G,n}(\F_q)|$, and describe, at least for $q$ large, how $|\Hn^Q_{G,n}(\F_q)|$ grows for $n\to\infty$.

The lack of an explicit linear stability range in the results of this article seems to exclude the possibility to employ our results in a similar framework as that of \cite{EVW:homstabhur}.
However, if an explicit linear stable range could be established, these kinds of polynomial stability results plausibly would have implications for point count problems.

\subsection{Acknowledgments}
The first author would like to thank B\'eranger Seguin for a useful conversation on the algebraic-geometric theory of Hurwitz spaces, and Oscar Randal-Williams for a conversation about stability phenomena in the presence of several stabilisation maps. Both authors would like to thank Jordan Ellenberg and Craig Westerland for useful comments on a first draft of the article.

Andrea Bianchi was supported by the Danish National Research Foundation through the Centre for Geometry and Topology (DNRF151) and the European Research Council under the European Union Horizon 2020 research and innovation programme (grant agreement No. 772960).

Jeremy Miller was supported in part by NSF grant DMS-2202943 and a Simons Foundation Collaboration Grants for Mathematicians.

\section{Preliminaries}
\label{sec:preliminaries}
Throughout the article, $G$ denotes a finite group and $Q\subseteq G$ a conjugation-invariant subset.
We denote by $q_1,\dots,q_m$ the elements of $Q$, where $m=|Q|$, and we let $\ell\ge1$ be a positive integer such that $a^\ell=\one\in G$ for each $a\in Q$: for instance, we could take $\ell$ to be the least common multiple of the multiplicative orders in the group $G$ of the elements of $Q$.
\subsection{Hurwitz spaces as homotopy quotients}
For $n\ge0$, we denote by $\Br_n$ the Artin braid group on $n$ strands,
with generators $\sigma_1,\dots,\sigma_{n-1}$ satisfying the usual braid and commuting relations.
The group $\Br_n$ acts on the set $Q^n$ of $n$-tuples of elements of $Q$ as follows: for $1\le i\le n-1$, the standard generator $\sigma_i\in\Br_n$ sends
\[
 \sigma_i\colon (a_1,\dots,a_n)\mapsto (a_1,\dots,a_{i-1},a_{i+1},a_i^{a_{i+1}},a_{i+2},\dots,a_n),
\]
where, for $a,b\in Q$, we denote $a^b=b^{-1}ab\in Q$. The fact that the action is well-defined is a consequence of the fact that $Q$, with the operation of conjugation restricted from $G$, is a quandle.
The following is the definition of Hurwitz spaces that we are going to use throughout the article.
\begin{defn}
 \label{defn:Hurwitz}
For $n\ge0$, we define the Hurwitz space $\Hur_n(Q)$ as the homotopy quotient
\[
 \Hur_n(Q)=Q^n\qq\Br_n.
\]
\end{defn}
A priori, a homotopy quotient is only defined as a homotopy type; in this article we realise homotopy quotients by the standard bar construction; for example $\Hur_n(Q)$, as a concrete topological space, is the geometric realisation of the simplicial set $B_\bullet(*,\Br_n,Q^n)$.

\subsection{Components of Hurwitz spaces}
By definition, $\Hur_n(Q)$ is the homotopy quotient of the set $Q^n$ by the action of the discrete group $\Br_n$: it follows that $\pi_0(\Hur_n(Q))$ is in natural bijection with the set of orbits of the action of $\Br_n$ on the set $Q^n$.
\begin{defn}
 For $\ua=(a_1,\dots,a_n)\in Q^n$ we denote by $\Hur_n(Q,\ua)\subset\Hur_n(Q)$ the component corresponding to the orbit of $\ua$ under the action of $\Br_n$.
\end{defn}
The space $\Hur_n(Q,\ua)$ is aspherical.
Let us denote by $\Br_n\cdot\ua\subset Q^n$ the orbit of $\ua$ under the braid group action: then $\Hur_n(Q,\ua)$ is canonically homeomorphic to $(\Br_n\cdot \ua)\qq\Br_n=|B_\bullet(*,\Br_n,\Br_n\cdot\ua)|$, and the fundamental group of $(\Br_n\cdot \ua)\qq\Br_n$ based at the 0-simplex $\ua$ is canonically isomorphic to the subgroup of $\Br_n$ stabilising $\ua$, which we denote $\Br_n(\ua)\subseteq\Br_n$.

\subsection{Topological monoid structure}
\label{subsec:topmonoid}
For $n,m\ge0$, consider the standard concatenating map of sets
$Q^n\times Q^m\overset{\cong}{\to} Q^{n+m}$
and the standard concatenating map of braid groups $\Br_n\times\Br_m\hookrightarrow\Br_{n+m}$.
If we let $\Br_n\times\Br_m$ act on $Q^{n+m}$ through its inclusion into $\Br_{n+m}$, we have that the map $Q^n\times Q^m\overset{\cong}{\to} Q^{n+m}$ is $(\Br_n\times\Br_m)$-equivariant; we also say that the map of sets $Q^n\times Q^m\overset{\cong}{\to} Q^{n+m}$ is equivariant with respect to the map of groups $\Br_n\times\Br_m\hookrightarrow\Br_{n+m}$. This equivariance gives rise to a map between the homotopy quotients
\[
 \Hur_n(Q)\times\Hur_m(Q)\cong (Q^n\times Q^m)\qq(\Br_n\times\Br_m)\to Q^{n+m}\qq\Br_{n+m}=\Hur_{n+m}(Q),
\]
and these maps assemble into a topological monoid structure on the disjoint union
\[
\Hur(Q)=\coprod_{n\ge0}\Hur_n(Q),
\]
with unit given by the unique point of $\Hur_0(Q)$.

Taking connected components, we obtain a discrete monoid $\pi_0(\Hur(Q))$; if we denote by $\hat q_i$ the connected component of $\Hur(Q)$ given by the (contractible) space $\Hur_1(Q,q_i)$, for all $q_i\in Q$, we have the following presentation by generators and relations of $\pi_0(\Hur(Q))$ as an associative, unital monoid:
\[
 \pi_0(\Hur(Q))\cong\left<\hat q_1,\dots, \hat q_m\  |\  \hat q_i\cdot\hat q_j=\hat q_j\cdot \widehat{q_i^{q_j}}\right>.
\]
\begin{defn}\label{defn:pi0ell}
 We denote by $\pi_0^\ell(\Hur(Q))\subseteq\pi_0(\Hur(Q))$ the unital submonoid generated by the elements $\hat q_j^\ell$ for $1\le i\le m$.
\end{defn}
\begin{lem}\label{lem:relationsB}
The monoid $\pi_0^\ell(\Hur(Q))$ is commutative, and for all $1\le i,j\le m$ the following relation holds in $\pi_0^\ell(\Hur(Q))$:
\[
 \hat q_i^\ell\cdot\hat q_j^\ell=\widehat{q_i^{q_j}}^\ell\cdot\hat q_j^\ell
\]
\end{lem}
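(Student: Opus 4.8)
The plan is to work entirely inside the monoid $\pi_0(\Hur(Q))$ via its presentation $\langle \hat q_1,\dots,\hat q_m \mid \hat q_i\hat q_j = \hat q_j\widehat{q_i^{q_j}}\rangle$, treating the defining relations as rewriting rules and feeding in the hypothesis $a^\ell=\one$ for $a\in Q$. Fixing $1\le i,j\le m$ and writing $a=q_i$, $b=q_j$, the first step is to record two commutation identities obtained by iterating the single defining relation. By a straightforward induction on $k\ge 1$ I would prove
\[
 \hat a^k\,\hat b = \hat b\,\widehat{a^b}^{\,k} \qquad\text{and}\qquad \hat a\,\hat b^k = \hat b^k\,\widehat{a^{b^k}},
\]
where $a^{b^k}=b^{-k}ab^k$; the base case $k=1$ is the defining relation and each inductive step reapplies it once. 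Throughout I use that $Q$ is conjugation-invariant, so that $a^b,a^{b^k}\in Q$ and every hatted symbol above is a legitimate generator.

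The second step inserts the hypothesis $b^\ell=\one$ into the second identity at $k=\ell$: since then $a^{b^\ell}=a$, it gives $\hat a\,\hat b^\ell=\hat b^\ell\,\hat a$, i.e. \emph{every} generator $\hat a$ commutes with \emph{every} $\ell$-th power $\hat b^\ell$. Commutativity of $\pi_0^\ell(\Hur(Q))$ is then immediate, since its generators are the $\hat q_j^\ell$: moving the block $\hat q_j^\ell$ leftward through each of the $\ell$ factors of $\hat q_i^\ell$ yields $\hat q_i^\ell\hat q_j^\ell=\hat q_j^\ell\hat q_i^\ell$.

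For the asserted relation I would begin on the left and apply the first identity at $k=\ell$ to push a single $\hat q_j$ across $\hat q_i^\ell$:
\[
 \hat q_i^\ell\,\hat q_j^\ell = \bigl(\hat q_i^\ell\,\hat q_j\bigr)\hat q_j^{\,\ell-1} = \hat q_j\,\widehat{q_i^{q_j}}^{\,\ell}\,\hat q_j^{\,\ell-1}.
\]
Now $\widehat{q_i^{q_j}}^{\,\ell}$ is the $\ell$-th power of the generator indexed by $q_i^{q_j}\in Q$, so by the second step $\hat q_j$ commutes with it; commuting $\hat q_j$ to the right through $\widehat{q_i^{q_j}}^{\,\ell}$ and reabsorbing it into $\hat q_j^{\,\ell-1}$ produces $\widehat{q_i^{q_j}}^{\,\ell}\hat q_j^\ell$, which is exactly the right-hand side.

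I expect the main subtlety to be \emph{resisting the temptation to cancel}. The target looks as though it should reduce, by cancelling $\hat q_j^\ell$, to the equality $\hat q_i^\ell=\widehat{q_i^{q_j}}^{\,\ell}$; but $\pi_0(\Hur(Q))$ is only a monoid, cancellation is unavailable, and in fact $\hat q_i^\ell=\widehat{q_i^{q_j}}^{\,\ell}$ is \emph{false} in general (already for transpositions in $\fS_3$, where $\widehat{(12)}^{\,2}$ and $\widehat{(23)}^{\,2}$ are distinct components). It is equally easy to prove only the weaker commutativity statement $\hat q_i^\ell\hat q_j^\ell=\hat q_j^\ell\hat q_i^\ell$ and mistake it for the claim. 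What makes the argument go through is that the first identity \emph{forces} the conjugated generator $q_i^{q_j}$ to appear, after which one commutes the leftover $\hat q_j$ past its $\ell$-th power—rather than cancelling—while carrying the trailing $\hat q_j^\ell$ unchanged throughout.
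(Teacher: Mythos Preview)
Your proof is correct and is essentially the same as the paper's. The paper simply asserts the two identities $\hat q_j\hat q_i^\ell=\hat q_i^\ell\widehat{q_j^{q_i^\ell}}$ and $\hat q_i^\ell\hat q_j=\hat q_j\widehat{q_i^{q_j}}^\ell$ without writing out the inductions, then concludes centrality of each $\hat q_i^\ell$ and derives the relation exactly as you do, by multiplying on the right by $\hat q_j^{\ell-1}$.
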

\begin{proof}
 We note that the elements $\hat q_i^\ell$ are central elements of the monoid $\pi_0(\Hur(Q))$: indeed for all $j$ we have $\hat q_j\cdot \hat q_i^\ell=\hat q_i^\ell\cdot \widehat{q_j^{q_i^\ell}}=\hat q_i^\ell\cdot\hat q_j$. This implies in particular that $\pi_0^\ell(\Hur(Q))$ is a commutative monoid.
 
 Moreover for all $i,j$ we have in $\pi_0(\Hur(Q))$ the equality $\hat q_i^\ell\cdot\hat q_j=\hat q_j\cdot \widehat{q_i^{q_j}}^\ell$, which by the previous argument is also equal to $\widehat{q_i^{q_j}}^\ell\cdot \hat q_j$; multiplying both sides on right by $\hat q_j^{\ell-1}$ we obtain the described relations among the generators of $\pi_0^\ell(\Hur(Q))$.
\end{proof}

\subsection{Constants attached to finite groups}
The following constants, attached to a group $G$, a conjugation-invariant subset $Q\subset G$, and an element $\omega\in G$, will be used to bound the degree of the quasi-polynomials that govern the growth of the homology of Hurwitz spaces.
\begin{defn}
\label{defn:kGQ}
 We denote by $k(G,Q)\ge1$ the maximum, for $H\subseteq G$ ranging among all subgroups of $G$, of the number of conjugacy classes in $H$ in which the conjugation-invariant subset $Q\cap H\subseteq H$ decomposes.
\end{defn}
\begin{defn}
\label{defn:kGQomega}
We denote by $k(G,Q,\omega)\ge1$ the maximum, for $H\subseteq G$ ranging among all subgroups of $G$ containing $\omega$, of the number of conjugacy classes in $H$ in which the conjugation-invariant subset $Q\cap H\subseteq H$ decomposes.
\end{defn}

\begin{defn}
\label{defn:omegalarge}
 Let $\omega\in G$ and assume that $Q\subset G$ is a single conjugacy class. We say that $\omega$ is \emph{large} with respect to $Q$ if for every $a\in Q$ the elements $\omega$ and $a$ generate $G$.
\end{defn}
Note that if $\omega\in G$ is large with respect to a conjugacy class $Q\subset G$, then in particular $k(G,Q,\omega)=1$. As an example, let $p$ be a prime number, let $G=\fS_p$ and let $Q$ be the conjugacy class of transpositions: then the long cycle $(1,\dots,p)$ is large with respect to $Q$. Another example is the following: let $d$ be an odd number and let $G=\Z/d\rtimes\Z/2$ be the $d$\textsuperscript{th} dihedral group; let $Q$ be the conjugacy class of involutions; then any generator of $\Z/d\subset G$ is large with respect to $Q$.

\subsection{Invariants of components}\label{subsec:invcomponents}
The action of $\Br_n$ on $Q^n$ preserves the following invariants defined on the set $Q^n$:
\begin{itemize}
 \item the \emph{total monodromy}: this invariant associates with an $n$-tuple $(a_1,\dots,a_n)$ the product $\omega:=a_1\dots a_n\in G$;
 \item the \emph{image subgroup}: this invariant associates with $(a_1,\dots,a_n)$ the subgroup $H:=\left<a_1,\dots,a_n\right>\subseteq G$;
 \item the \emph{conjugacy class partition}: let $(a_1,\dots,a_n)\in Q^n$ and let $H=\left<a_1,\dots,a_n\right>$ as above; let $Q_1,\dots,Q_s\subset Q\cap H$ be the conjugacy classes in $H$ in which the conjugation invariant subset $Q\cap H$ splits; this invariant associates with $(a_1,\dots,a_n)$ the splitting $n=n_1+\dots+n_s$, where $n_i$ is the cardinality of $\set{a_1,\dots,a_n}\cap Q_i$. This is called the ``multidiscriminant'' in \cite{EVW:homstabhurII}.
\end{itemize}
Each of the above invariant gives rise to an invariant of connected components of $\Hur(Q)$; we will introduce notation only for the first invariant.
\begin{defn}
\label{defn:totmon}
 For $\omega\in G$ and for $n\ge0$ we denote by $\Hur_n(Q)_\omega\subset\Hur_n(Q)$ the union of connected components corresponding to $n$-tuples $(a_1,\dots,a_n)$ with total monodromy $\omega$.
 Similarly, we denote $\Hur(Q)_\omega=\coprod_{n\ge0}\Hur_n(Q)_\omega$.
\end{defn}

\subsection{Quasi-polynomials}\label{subsec:quasipolynomials}
 Let $\ell\ge1$, and for $n\in\Z$ denote by $[n]_\ell\in\Z/\ell$ the class of $n$ modulo $\ell$. We will use the following notion of quasi-polynomial.
\begin{defn}
A quasi-polynomial of period dividing $\ell$, denoted $p_{[-]_\ell}(t)$, is the datum of $\ell$ polynomials $p_{[1]_\ell}(t),\dots,p_{[\ell]_\ell}(t)\in\Q[t]$.

The degree of a non-zero quasi-polynomial is the maximum among the degrees of the polynomials $p_{[i]_\ell}(t)$; the degree of the zero quasi-polynomial is set to be $-\infty$.

A quasi-polynomial induces a function $\Z\to\Q$, given by sending $n\mapsto p_{[n]_\ell}(n)$.
\end{defn}
As one can see, the argument $n\in\Z$ must be input twice, once as index of the polynomial, once as value of the variable $t$, in order to evaluate a quasi-polynomial at $n$.

\section{Noetherian rings}
In this short section we prove that $A=H_0(\Hur(Q))$ is a Noetherian ring, and give a proof of Theorem \ref{thm:main2} assuming Theorem \ref{thm:main1}.

\subsection{Several subrings of \texorpdfstring{$A$}{A}}
The ring $A$ admits the following presentation by generators and relation as an associative ring (compare with the presentation of $\pi_0(\Hur(Q))$ from Subsection \ref{subsec:topmonoid}):
 \[
  A=\Z\left< [q_1],\dots, [q_m]\  |\  [q_i][q_j]=[q_j][q_i^{q_j}]\right>,
 \]
where $[q_i]\in H_0(\Hur_1(Q))$ is defined as the ground class of the (contractible) space $\Hur_1(Q,q_i)$.

We introduce two subrings of $A$.
\begin{defn}\label{defn:B}
We denote by $B\subseteq A$ the subring generated by the elements $[q_i]^\ell$, for $1\le i\le m$.
Equivalently, $B$ is the monoid ring of the monoid $\pi_0^\ell(\Hur(Q))$ from Definition \ref{defn:pi0ell}.
\end{defn}
\begin{defn}\label{defn:Aone}
Recall Definition \ref{defn:totmon}. We denote by $A_\one\subseteq A$ the monoid ring of the submonoid
 \[
\pi_0(\Hur(Q)_\one)\subseteq\pi_0(\Hur(Q)).
\]
\end{defn}
We observe that $B\subseteq A_\one$, as each generator $\hat q_i^\ell\in \pi_0^\ell(\Hur(Q))$ has total monodromy equal to $q_i^\ell=\one\in G$. We also observe that $A_\one$ is a central subring of $A$, as a consequence of the fact that $\pi_0(\Hur(Q)_\one)$ is a central submonoid of $\pi_0(\Hur(Q))$: to see this, let $a\in Q$ and let $(a_1,\dots,a_n)\in Q^n$ be such that $a_1\dots a_n=\one\in G$; then $\hat a\cdot (\hat a_1\dots\hat a_n)=(\hat a_1\dots\hat a_n)\cdot\widehat {a^{a_1\dots a_n}}$ by the relations holding in $\pi_0(\Hur(Q))$, and we have 
$\widehat {a^{a_1\dots a_n}}=\widehat{a^\one}=\hat a$.
\begin{lem}
 \label{lem:Anoetherian}
The associative ring $A$ is finitely generated as a $B$-module. As a consequence we have the following:
\begin{enumerate}
 \item $A$ is Noetherian: a sub-module of a finitely generated left or right $A$-module is finitely generated.
 \item $A_\one$ is also finitely generated as a $B$-module, and is also a Noetherian ring.
 \item $A$ is finitely generated as a $A_\one$-module.
\end{enumerate}
\end{lem}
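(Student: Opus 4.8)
The plan is to reduce the entire lemma to its first assertion, that $A$ is finitely generated as a (left, and by centrality equivalently two-sided) $B$-module, and then to deduce (1)--(3) by standard commutative-algebra arguments over a central Noetherian subring. First I would record that $B$ is itself Noetherian: by Lemma \ref{lem:relationsB} the monoid $\pi_0^\ell(\Hur(Q))$ is commutative and finitely generated by $\hat q_1^\ell,\dots,\hat q_m^\ell$, so $B$ is a quotient of the polynomial ring $\Z[x_1,\dots,x_m]$ and hence Noetherian by the Hilbert basis theorem. Since $B\subseteq A_\one$ and $A_\one$ is central in $A$, the subring $B$ is central in $A$. Granting that $A$ is module-finite over $B$, every finitely generated $A$-module is finitely generated over the Noetherian ring $B$, hence is a Noetherian $B$-module, so all of its $A$-submodules are finitely generated over $B$ and a fortiori over $A$; this gives (1). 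For (2), $A_\one$ is a $B$-submodule of the Noetherian $B$-module $A$, hence finitely generated over $B$, and being module-finite over the central Noetherian subring $B$ it is itself Noetherian. For (3), any $B$-module generating set of $A$ also generates $A$ over the larger ring $A_\one\supseteq B$.

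It remains to prove that $A$ is finitely generated as a $B$-module. Here I use the weight grading $A=\bigoplus_{n\ge0}A_n$, where $A_n=H_0(\Hur_n(Q))$ is the free abelian group on the finite set $\pi_0(\Hur_n(Q))$, together with the fact that $B$ is generated in weight $\ell$ by the classes $[q_i]^\ell$. The goal is to find an integer $N$ such that
\[
A_n=\sum_{i=1}^m [q_i]^\ell\cdot A_{n-\ell}\qquad\text{for all }n\ge N.
\]
Granting this, a straightforward induction on $n$ shows that the $B$-submodule generated by the finite-rank abelian group $\bigoplus_{0\le j<N}A_j$ is all of $A$: for $n\ge N$ one has $A_n\subseteq B\cdot A_{n-\ell}$, and $A_{n-\ell}$ already lies in this submodule by the inductive hypothesis. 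Since $\bigoplus_{0\le j<N}A_j$ has finite rank (indeed $\sum_{j<N}|\pi_0(\Hur_j(Q))|\le\sum_{j<N}|Q|^j$), this exhibits $A$ as a finitely generated $B$-module.

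The core of the argument is the inclusion $A_n\subseteq\sum_i[q_i]^\ell A_{n-\ell}$, which by $\Z$-linearity amounts to the combinatorial claim that, for $n$ large, every tuple $\ua=(a_1,\dots,a_n)\in Q^n$ is $\Br_n$-equivalent to one having $\ell$ copies of a single element $q_i\in Q$ as its first $\ell$ entries. I would prove this in two elementary steps. Since each $a_j$ takes one of the $|Q|=m$ values, pigeonhole shows that as soon as $n>(\ell-1)m$ some value $q_i$ occurs at least $\ell$ times among the entries of $\ua$. Then, using the generator formula $\sigma_j\colon(\dots,a_j,a_{j+1},\dots)\mapsto(\dots,a_{j+1},a_j^{a_{j+1}},\dots)$, note that applying $\sigma_j$ moves the entry $a_{j+1}$ one step to the left \emph{without changing its value}; sliding the chosen $\ell$ occurrences of $q_i$ to the front one at a time (each slide only conjugates the entries it passes) produces a $\Br_n$-equivalent tuple of the form $(q_i,\dots,q_i,\ast,\dots,\ast)$, that is, an element left-divisible by $\hat q_i^\ell$. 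Taking $N=(\ell-1)m+1$ then completes the reduction. I expect the only genuinely delicate points to be bookkeeping ones: verifying that the sliding moves preserve the value $q_i$ of the entries being collected, and checking that module-finiteness over the central Noetherian ring $B$ transfers Noetherianity to $A$ and $A_\one$. The combinatorial heart is a pure pigeonhole argument, and in particular the constant $k(G,Q)$ plays no role here, entering only in the quantitative degree bounds of the corollaries.
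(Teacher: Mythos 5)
Your proposal is correct and follows essentially the same route as the paper: the same reduction of (1)--(3) to module-finiteness of $A$ over the central Noetherian ring $B$ (realized as a quotient of $\Z[x_1,\dots,x_m]$), and the same pigeonhole-plus-sliding argument for that module-finiteness, where your braid-move description of sliding an occurrence of $q_i$ leftward (conjugating only the entries it passes) is exactly the paper's rewriting of products via the relations $[q_i][q_j]=[q_j][q_i^{q_j}]$. The only cosmetic difference is that you organize generation by weight, proving $A_n=\sum_i [q_i]^\ell\cdot A_{n-\ell}$ for $n>(\ell-1)m$, whereas the paper states directly that the products $[a_1]\cdots[a_k]$ with $k\le m(\ell-1)$ generate $A$ over $B$; the bound and the substance are identical.
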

\begin{proof}
The commutative ring $B$ is Noetherian, as it is a quotient of a polynomial ring over $\Z$ with $m$ variables (in the same way as $B$ is a quotient of a free abelian monoid on $m$ generators). Thus if we prove that $A$ is a finitely generated $B$-module, we immediately have that $A$ is Noetherian: for if $M$ is a finitely generated (left or right) $A$-module and $M'\subset M$ is a submodule, then $M$ is also finitely generated over $B$, and by Noetherianity of $B$ we have that $M'$ is finitely generated over $B$, and a fortiori over $A$. This proves that the main statement implies (1); it also implies (2), as $A_\one$ is a sub-$B$-module of $A$, and $B$ is Noetherian; Noetherianity of $A_\one$ then follows from the same argument used to prove (1). Finally, (3) follows from the inclusion $B\subseteq A_\one$ together with the fact that $A$ is finitely generated as a $B$-module.

We now prove the main statement. Recall that $\pi_0^\ell(\Hur(Q))$ is a central submonoid in $\pi_0(\Hur(Q))$: this implies that $B$ is a central subring of $A$.
Our next goal is to show that $A$ is a finitely generated $B$-module: more precisely, the products $[a_1]\cdot \dots\cdot[a_k]$ with $k\le m(\ell-1)$ and $a_1,\dots,a_k\in Q$ suffice to generate $A$ over $B$. For this, let $[a_1]\cdot \dots\cdot[a_n]$ be any product of generators of $A$, and assume $n>m(\ell-1)$. By the pigeonhole principle there is $q_j\in Q$ and there are $\ell$ indices $i_1,\dots,i_\ell$ such that $a_{i_1}=\dots=a_{i_\ell}=q_j$. We can then use the relations in $A$ and rewrite
$[a_1]\cdot \dots\cdot[a_n]$ in the form $[q_j]^\ell\cdot [a'_1]\cdot \dots\cdot[a'_{n-\ell}]$, where the sequence $a'_1,\dots,a'_{n-\ell}$ is obtained by removing from $a_1,\dots,a_n$ the elements $a_{i_1},\dots,a_{i_\ell}$, and by suitably conjugating the remaining elements by powers of $q_j$. This concludes the proof that $A$ is a finitely generated $B$-module.
\end{proof}
If $R$ is a commutative ring, we can tensor $A,A_\one,B$ with $R$ and obtain the rings $A\otimes R=H_0(\Hur(Q);R)$, $A_\one\otimes R=H_0(\Hur(Q)_\one;R)$ and $B=R[\pi_0^\ell(\Hur(Q))]$. Lemma \ref{lem:Anoetherian} gives the following corollary.
\begin{cor}\label{cor:Anoetherian}
 Let $R$ be a commutative ring; then $A\otimes R$ and $A_\one\otimes R$ are finitely generated $B\otimes R$-modules, and $A\otimes R$ is a finitely generated $A_\one\otimes R$-module.
 If $R$ is Noetherian, then all rings $A\otimes R$, $A_\one\otimes R$ and $B\otimes R$ are Noetherian.
\end{cor}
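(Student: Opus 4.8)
The plan is to deduce the entire corollary from Lemma \ref{lem:Anoetherian} by base change along the ring map $\Z\to R$. The one structural fact I would invoke repeatedly is that tensoring with $R$ over $\Z$ preserves module-finiteness: if a module $M$ is finitely generated over a ring $S$, then $M\otimes_\Z R$ is finitely generated over $S\otimes_\Z R$, with the same (images of) generators. Everything in the statement is a formal consequence of this observation together with the Hilbert basis theorem.

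First I would record the finite generation claims. Lemma \ref{lem:Anoetherian} asserts that $A$ is finitely generated over $B$, that $A_\one$ is finitely generated over $B$, and that $A$ is finitely generated over $A_\one$. Applying $-\otimes_\Z R$ and the observation above yields at once that $A\otimes R$ and $A_\one\otimes R$ are finitely generated over $B\otimes R$, and that $A\otimes R$ is finitely generated over $A_\one\otimes R$. Concretely, the finite set of products $[a_1]\cdots[a_k]$ with $k\le m(\ell-1)$ produced in the proof of Lemma \ref{lem:Anoetherian} continues to generate $A\otimes R$ over $B\otimes R$ after base change.

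For the Noetherian claims I would first treat $B\otimes R$. Since $B$ is a quotient of the polynomial ring $\Z[x_1,\dots,x_m]$, the ring $B\otimes R$ is a quotient of $R[x_1,\dots,x_m]$; by the Hilbert basis theorem $R[x_1,\dots,x_m]$ is Noetherian whenever $R$ is, and quotients of Noetherian commutative rings are Noetherian, so $B\otimes R$ is Noetherian. To conclude that $A\otimes R$ and $A_\one\otimes R$ are Noetherian, I would then repeat verbatim the argument from the proof of Lemma \ref{lem:Anoetherian}: because $B$ is central in $A$, the ring $B\otimes R$ is central in $A\otimes R$, so any finitely generated left or right $A\otimes R$-module $M$ is finitely generated over $B\otimes R$; Noetherianity of $B\otimes R$ then forces every submodule of $M$ to be finitely generated over $B\otimes R$, hence a fortiori over $A\otimes R$. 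The identical reasoning applies with $A_\one\otimes R$ in place of $A\otimes R$, using $B\subseteq A_\one$. I do not expect a genuine obstacle here, since the corollary is a formal base-change consequence of the lemma; the only point requiring mild care is the non-commutativity of $A$ and $A_\one$, which is handled exactly as in the lemma by appealing to the centrality of $B$.
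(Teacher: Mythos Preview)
Your proposal is correct and matches the paper's approach: the paper gives no separate proof, simply stating that the corollary follows from Lemma \ref{lem:Anoetherian}, and your argument spells out exactly that deduction by base change. One small inaccuracy worth noting is that $A_\one$ is in fact commutative (it is central in $A$, as observed just before Lemma \ref{lem:Anoetherian}), so the ``mild care'' about non-commutativity applies only to $A\otimes R$; this does not affect your argument.
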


\subsection{Proof of Theorem \ref{thm:main2} assuming Theorem \ref{thm:main1}}
Let $R$ be a Noetherian ring and let $i\ge0$; then by Theorem \ref{thm:main1} $H_i(\Hur(Q);R)$ is finitely generated over $A\otimes R$; by Lemma \ref{lem:Anoetherian} $A\otimes R$ is finitely generated as an $A_\one\otimes R$-module, and hence also $H_i(\Hur(Q);R)$ is finitely generated as an $A_\one\otimes R$-module. We have a finite direct sum decomposition of $A_\one\otimes R$-modules
\[
 H_i(\Hur(Q);R)\cong \bigoplus_{g\in G}  H_i(\Hur(Q)_g;R),
\]
and hence each direct summand is a finitely generated $A_\one\otimes R$-module.

\section{Arc complexes}
\label{sec:arccomplex}
Hatcher and Wahl \cite{hatcherwahl} introduced, for $n\ge0$, an augmented semisimplicial set $\cA_{n,\bullet}$, whose initial application was an alternative proof of homology stability of braid groups \cite[Proposition 1.7]{hatcherwahl}, originally proved by Arnol'd \cite{arnold} (see also \cite[§5.6]{RWW}). In the following definition we recall this construction and generalise it to our Hurwitz setting; this is approach has been already used in \cite{EVW:homstabhur}
\subsection{High connectivity of augmented arc complexes}
We denote by $D=[0,1]^2\subset\R^2$ the standard unit square in the complex plane, endowed with basepoint $*=(0,0)$, and denote by $\mD=(0,1)^2$ the interior of $D$. We also let $I=[0,1]\times \set{0}$ be the bottom edge of $D$.

For $n\ge0$ and $1\le i\le n$ we let $\bar z_{n,i}=(i/(n+1),1/2)\in\mD$, and we denote $\bar P_n=\set{\bar z_{n,i}}$.
\begin{defn}
\label{defn:arccomplex}
Let $n\ge0$. For $p\ge-1$ we denote by $\cA_{n,p}$ the set of isotopy classes of collections of $p+1$ arcs $\alpha_0,\dots,\alpha_p\colon[0,1]\to D$ satisfying the following conditions:\footnote{Two collections of arcs are considered isotopic if they are connected by an isotopy through collections of arcs with the required properties.}
\begin{itemize}
 \item each arc $\alpha_i$ is an embedding of $[0,1]$ in $D$, sending $0$ to a point of $I$, $(0,1)$ inside $\mD\setminus\bar P_n$, and $1$ to a point of $\bar P_n$;
 \item the arcs have disjoint images, also at their endpoints, and using the natural orientation of $I$ we have $\alpha_0(0)<\dots<\alpha_p(0)$.
\end{itemize}
In particular, $\cA_{n,-1}$ is a singleton (the empty collection of arcs), and $\cA_{n,p}$ is empty for $p\ge n$. Forgetting arcs makes the collection $\cA_{n,\bullet}$ into an augmented semisimplicial set.

We denote by $\cA_n(Q)_\bullet$ the augmented semisimplicial set $\cA_{n,\bullet}\times Q^n$, whose set of $p$-simplices is $\cA_{n,p}\times Q^n$.

The braid group $\Br_n$ acts both on the augmented semisimplicial set $\cA_{n,\bullet}$ and on the set $Q^n$, hence it acts diagonally on $\cA_n(Q)_\bullet$ by automorphisms of augmented semisimplicial sets; taking levelwise the homotopy quotient we obtain an augmented semisimplicial space $\cA_n(Q)_\bullet\qq\Br_n$.
\end{defn}
The braid group $\Br_n=\Gamma_{0,1}^{(n)}$ acts both on the augmented semisimplicial set $\cA_{n,p}$ and on the set $Q^n$, and hence there is a diagonal action of $\Br_n$ on $\cA_n(Q)_\bullet$.
The geometric realisation of $\cA_{n,\bullet}$ is contractible,
i.e. the augmentation $|\cA_{n,\bullet\ge0}|\to\cA_{n,-1}$ is a
homotopy equivalence, the second space being a point \cite[Theorem 2.48]{Damiolini} (see also \cite[Proposition 3.2]{HV:tethers}).

It also follows that the augmentation $|\cA_n(Q)_{\bullet\ge0}|\to \cA_n(Q)_{-1}$ is a homotopy equivalence, the second space being the set $Q^n$,
and it further follows that the augmentation
$|\cA_n(Q)_{\bullet\ge0}\qq\Br_n|\to\cA_n(Q)_{-1}\qq\Br_n$ is a
homotopy equivalence, the second space being the space $\Hur_n(Q)$.
In the next subsection we will analyse more closely the augmented semisimplicial space $\cA_n(Q)_\bullet\qq\Br_n$.
\begin{nota}
\label{nota:cSndot}
 We denote by $\cS^n_\bullet$ the augmented semisimplicial space $\cA_n(Q)_\bullet\qq\Br_n$, leaving $Q$ implicit.
\end{nota}
We record the previous discussion as a lemma for future reference.
\begin{lem}\label{lem:damiolini}
The augmentation $|\cS^n_\bullet|\to\cS^n{-1}$ is a homotopy equivalence.
\end{lem}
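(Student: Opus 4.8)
The statement simply records the chain of homotopy equivalences already assembled in the discussion above, so the plan is to make that chain precise, paying attention to $\Br_n$-equivariance and to the interchange of geometric realisation with the homotopy quotient.

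First I would take as input the contractibility result of Damiolini \cite[Theorem 2.48]{Damiolini}: the augmentation $|\cA_{n,\bullet\ge0}|\to\cA_{n,-1}$ is a homotopy equivalence onto a point. Crucially, this map is $\Br_n$-equivariant, because $\Br_n$ acts on the augmented semisimplicial set $\cA_{n,\bullet}$ by automorphisms and hence commutes with the augmentation. Crossing with the discrete set $Q^n$ gives a $\Br_n$-equivariant identification $|\cA_n(Q)_{\bullet\ge0}|\cong|\cA_{n,\bullet\ge0}|\times Q^n$ (realisation commutes with products by a discrete set, being in this case a disjoint union over $Q^n$), under which the augmentation $|\cA_n(Q)_{\bullet\ge0}|\to\cA_n(Q)_{-1}=Q^n$ becomes a homotopy equivalence times the identity on $Q^n$, hence again a $\Br_n$-equivariant homotopy equivalence.

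The final step is to apply $(-)\qq\Br_n$. Using that geometric realisation commutes with the homotopy quotient (both being built from colimits, i.e.\ bar constructions), I would identify $|\cS^n_\bullet|=|\cA_n(Q)_{\bullet\ge0}\qq\Br_n|$ with $|\cA_n(Q)_{\bullet\ge0}|\qq\Br_n$, and identify the target $\cS^n_{-1}=\cA_n(Q)_{-1}\qq\Br_n=Q^n\qq\Br_n$ with $\Hur_n(Q)$. It then remains to observe that the homotopy quotient functor $(-)\qq\Br_n$ sends $\Br_n$-equivariant maps that are (non-equivariantly) homotopy equivalences to homotopy equivalences; this follows by comparing the Borel fibration sequences $X\to X\qq\Br_n\to B\Br_n$ for source and target, applying the five lemma to the associated long exact sequences of homotopy groups (the map on fibres being a weak equivalence and the map on bases the identity), and upgrading from weak to genuine homotopy equivalence since all the spaces in sight have the homotopy type of CW complexes.

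I do not expect a genuine obstacle here: the lemma is a bookkeeping statement whose content is exactly the ``it also follows $\dots$ and it further follows $\dots$'' remarks preceding it. The only points requiring care are the two standard interchanges just mentioned---that the realisation of the levelwise homotopy quotient agrees with the homotopy quotient of the realisation, and that $(-)\qq\Br_n$ preserves homotopy equivalences---both of which are routine.
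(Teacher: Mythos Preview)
Your proposal is correct and follows exactly the approach of the paper: the lemma is stated there with the remark ``We record the previous discussion as a lemma for future reference,'' and that discussion is precisely the three-step chain (Damiolini's contractibility of $|\cA_{n,\bullet}|$, crossing with $Q^n$, passing to the levelwise homotopy quotient) that you spell out. You have merely supplied the routine justifications for the two interchanges that the paper leaves implicit.
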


\subsection{The augmented semisimplicial space \texorpdfstring{$\cS^n_\bullet$}{Sndot}}
For $-1\le p\le n-1$, the space of $p$-simplices in $\cS^n_\bullet$ is $(\cA_{n,p}\times Q^n)\qq\Br_n$, in particular it is the homotopy quotient of a set by an action of a discrete group: it has therefore the homotopy type of a disjoint union of aspherical spaces, one for each orbit of the diagonal action of $\Br_n$ on $\cA_{n,p}\times Q^n$. The action of $\Br_n$ on $\cA_{n,p}$ is transitive. Let $\bar\ualpha_{n,p}=(\bar\alpha_{n,p,0},\dots,\bar\alpha_{n,p,p})$ be the isotopy class of the collection of $p+1$ straight vertical segments joining $I$ with the points $\bar{z}_{n,1},\dots,\bar{z}_{n,p+1}\in\bar P_n$ (see Figure \ref{fig:stdsimplex}); then the stabiliser of $\bar\ualpha_{n,p}$
is the subgroup of $\Br_n$ generated by the last $n-p-2$ standard generators, which is isomorphic to $\Br_{n-p-1}$.
\begin{figure}[ht]
 \begin{tikzpicture}[scale=4,decoration={markings,mark=at position 0.3 with {\arrow{>}}}]
  \fill[black, opacity=.2] (0,0) rectangle (1,1);
  \draw[black] (0,0) rectangle (1,1);
  \node at (0,0) {$*$};
  \node at (.2,.5){$\bullet$};
  \node at (.4,.5){$\bullet$};
  \node at (.6,.5){$\bullet$};
  \node at (.8,.5){$\bullet$};
  \node at (.2,.55){$\bar{z}_{4,1}$};
  \node at (.4,.55){$\bar{z}_{4,2}$};
  \node at (.6,.55){$\bar{z}_{4,3}$};
  \node at (.8,.55){$\bar{z}_{4,4}$};
  \draw[thin, looseness=1] (.1,0) to[out=90, in=-90] node{$\bar\alpha_{4,2,0}$} (.2,.5); 
  \draw[thin, looseness=1] (.5,0) to[out=90, in=-90] node{$\bar\alpha_{4,2,1}$} (.4,.5); 
  \draw[thin, looseness=1] (.9,0) to[out=90, in=-90] node{$\bar\alpha_{4,2,2}$} (.6,.5); 
\end{tikzpicture}
 \caption{The 2-simplex $\bar\ualpha_{4,2}$.}
 \label{fig:stdsimplex}
\end{figure}
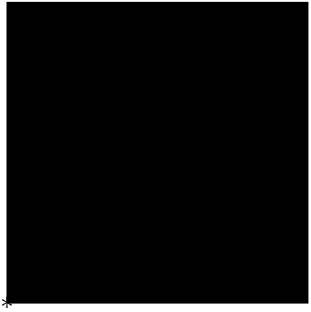

\begin{nota}
We denote by $\one_n\subset\Br_n$ the trivial subgroup of the $n$\textsuperscript{th} braid group. For $0\le p\le n$ we consider $\Br_p\times\Br_{n-p}$ as a subgroup of $\Br_n$, generated by all standard generators except the $p$\textsuperscript{th}.
In particular we denote by $\one_p\times\Br_{n-p}\subset\Br_n$ the subgroup of $\Br_n$ generated by the last $n-p-1$ standard generators.
\end{nota}
\begin{lem}
\label{lem:htypeAnQpqqBrn}
 The spaces $\cS^n_p$ and $Q^{p+1}\times\Hur_{n-p-1}(Q)$ are homotopy equivalent. More precisely, the natural map
$Q^{p+1}\times\Hur_{n-p-1}(Q)\to \cS^n_p=\cA_n(Q)_p\qq\Br_n$ induced by the inclusion of sets
$\set{\bar\ualpha_{n,p}}\times Q^n\subset \cA_{n,p}\times Q^n$ and the inclusion of groups $\Br_{n-p-1}\cong\one_{p+1}\times\Br_{n-p-1}\subset\Br_n$, is a homotopy equivalence.
\end{lem}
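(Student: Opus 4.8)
The plan is to exhibit the map in the statement as an instance of a change-of-groups (Shapiro-type) equivalence for homotopy quotients of sets by discrete groups. First I would record, as already observed above, that $\Br_n$ acts transitively on $\cA_{n,p}$ and that the stabiliser of the standard simplex $\bar\ualpha_{n,p}$ is the subgroup $H:=\one_{p+1}\times\Br_{n-p-1}$ generated by $\sigma_{p+2},\dots,\sigma_{n-1}$; thus as a $\Br_n$-set we have $\cA_{n,p}\cong\Br_n/H$, with $\bar\ualpha_{n,p}$ corresponding to the coset of the identity. Next I would produce a $\Br_n$-equivariant bijection
\[
\Br_n\times_H Q^n\;\xrightarrow{\ \cong\ }\;\cA_{n,p}\times Q^n,\qquad [g,v]\mapsto (g\cdot\bar\ualpha_{n,p},\,g\cdot v),
\]
with inverse sending $(x,w)$, where $x=g\cdot\bar\ualpha_{n,p}$, to $[g,g^{-1}w]$ (independent of the chosen $g$ since two choices differ by an element of $H$); here $Q^n$ on the left is restricted to the $H$-action. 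This identifies $\cA_n(Q)_p=\cA_{n,p}\times Q^n$ with the induced $\Br_n$-set $\Br_n\times_H Q^n$.

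The second step is the equivalence
\[
 Q^n\qq H\;\xrightarrow{\ \simeq\ }\;(\Br_n\times_H Q^n)\qq\Br_n=\cS^n_p,
\]
induced by the $H$-equivariant inclusion $v\mapsto[e,v]$ together with the inclusion $H\subset\Br_n$. Modelling $(-)\qq\Gamma$ by the Borel construction $E\Gamma\times_\Gamma(-)$ (which agrees with the bar construction $|B_\bullet(*,\Gamma,-)|$ used in the paper), this is the standard chain
\[
 (\Br_n\times_H Q^n)\qq\Br_n=E\Br_n\times_{\Br_n}(\Br_n\times_H Q^n)\cong E\Br_n\times_H Q^n\simeq EH\times_H Q^n=Q^n\qq H,
\]
where the middle identification is associativity of the balanced product and the final equivalence uses that the free contractible $\Br_n$-space $E\Br_n$ is also a model for $EH$. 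Under the bijection of the first step, the inclusion $v\mapsto[e,v]$ corresponds precisely to $v\mapsto(\bar\ualpha_{n,p},v)$, i.e. to the inclusion $\set{\bar\ualpha_{n,p}}\times Q^n\subset\cA_{n,p}\times Q^n$ of the statement.

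Finally I would analyse the $H$-action on $Q^n$. Since the generators $\sigma_{p+2},\dots,\sigma_{n-1}$ of $H$ only modify the coordinates $p+2,\dots,n$, under the splitting $Q^n=Q^{p+1}\times Q^{n-p-1}$ the group $H=\one_{p+1}\times\Br_{n-p-1}$ acts trivially on the first factor and by the standard $\Br_{n-p-1}$-action on the second. As the action splits as a product, so does the homotopy quotient:
\[
 Q^n\qq H\cong\bigl(Q^{p+1}\qq\one_{p+1}\bigr)\times\bigl(Q^{n-p-1}\qq\Br_{n-p-1}\bigr)=Q^{p+1}\times\Hur_{n-p-1}(Q),
\]
where $Q^{p+1}\qq\one_{p+1}=Q^{p+1}$ and the second factor is $\Hur_{n-p-1}(Q)$ by Definition \ref{defn:Hurwitz}. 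Composing with the equivalence of the second step yields exactly the map of the statement and shows it is a homotopy equivalence. The step I expect to require the most care is the second one: verifying that the abstract change-of-groups equivalence is induced by precisely the inclusions named in the statement, i.e. matching the Borel- (or bar-) construction model of the map with the concrete inclusion $\set{\bar\ualpha_{n,p}}\times Q^n\hookrightarrow\cA_{n,p}\times Q^n$ together with $H\hookrightarrow\Br_n$. This is a naturality check, but one must ensure that the bijection $\Br_n\times_H Q^n\cong\cA_{n,p}\times Q^n$ is compatible with the chosen basepoint coset and introduces no twist by an element of $H$.
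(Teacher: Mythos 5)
Your proposal is correct, but it takes a different route from the paper. The paper's proof is a hands-on orbit-and-stabiliser analysis: it shows that every $\Br_n$-orbit in $\cA_{n,p}\times Q^n$ meets the slice $\set{\bar\ualpha_{n,p}}\times Q^n$, determines exactly when two points of that slice lie in the same orbit (namely when their first $p+1$ entries agree and their tails lie in the same $\Br_{n-p-1}$-orbit), and identifies the stabiliser of $(\bar\ualpha_{n,p};\ua)$ as $\one_{p+1}\times\Br_{n-p-1}(a_{p+1},\dots,a_n)$; since both sides are homotopy quotients of sets by discrete groups, hence disjoint unions of aspherical spaces, matching $\pi_0$ and $\pi_1$ componentwise gives the equivalence. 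You instead package the same two inputs (transitivity of $\Br_n$ on $\cA_{n,p}$ and the stabiliser computation $H=\one_{p+1}\times\Br_{n-p-1}$) into the equivariant identification $\cA_{n,p}\times Q^n\cong\Br_n\times_H Q^n$ and invoke the general change-of-groups equivalence $(G\times_H X)\qq G\simeq X\qq H$, then split the $H$-action as a product. Your argument is more structural: it never uses that the fibres are discrete (so it would work with any $H$-space in place of $Q^n$) and avoids enumerating orbits, at the cost of the model-comparison checks you flag (bar versus Borel construction, $E\Br_n$ as a model for $EH$, and the naturality check that the abstract equivalence really is the stated map). The paper's concrete bookkeeping, by contrast, is not wasted effort: essentially the same orbit/stabiliser computation is re-run in the proof of Proposition \ref{prop:descriptiondi} with $\bar\ualpha_{n,p}^{\hat i}$ in place of $\bar\ualpha_{n,p-1}$, where the explicit description of stabilisers (and their conjugates by $\fb_{n,p,i}$) is what makes the face maps computable; if you adopt your proof, you would still need to extract that data there.
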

\begin{proof}
By the previous discussion, each orbit of the action of $\Br_n$ on $\cA_{n,p}\times Q^n$ contains elements of the form $(\bar\ualpha_{n,p};\ua)$. Moreover,
any two elements $(\bar\ualpha_{n,p};\ua)$ and $(\bar\ualpha_{n,p};\ua')$ in the same $\Br_n$-orbit can be transformed into each other by the action of a suitable element in $\one_{p+1}\times\Br_{n-p-1}$: this implies the equality $a_i=a'_i$ for $0\le i\le p$, and it also implies that the subsequences $(a_{p+1},\dots,a_n)$ and $(a'_{p+1},\dots,a'_n)$ belong to the same orbit of the action of $\Br_{n-p-1}$ on $Q^{n-p-1}$. Viceversa, two elements $(\bar\ualpha_{n,p};\ua)$ and $(\bar\ualpha_{n,p};\ua')$ satisfying the previous requirements can be transformed into each other by the action of $\Br_{n-p-1}\subset\Br_n$, and thus belong to the same orbit of the action of $\Br_n$ on $\cA_n(Q)_p$.

We conclude by remarking that the stabiliser of a single element $(\bar\ualpha_{n,p};\ua)\in\cA_n(Q)_p$ is the subgroup $\one_{p+1}\times \Br_{n-p-1}(a_{p+1},\dots,a_n)\subseteq\one_{p+1}\times\Br_{n-p-1}\subseteq\Br_n$.
\end{proof}
In particular, as already remarked, the space of $(-1)$-simplices $\cS^n_{-1}$ is homotopy equivalent, and in fact canonically homeomorphic, to $\Hur_n(Q)$.

Let us now look at the face maps $d_i\colon \cS^n_p\to \cS^n_{p-1}$, for $p\ge0$ and $0\le i\le p$ (in the case $p=0$ we denote by $d_0$ the augmentation).
\begin{defn}
\label{defn:stabmap}
Let $a\in Q$ and $n\ge0$, and consider $\Br_n\cong \one_1\times\Br_n$ as a subgroup of $\Br_{n+1}$. The map $a\times -\colon Q^n\to Q^{n+1}$, sending $(a_1,\dots,a_n)\mapsto(a,a_1,\dots,a_n)$, is equivariant with respect to the action of $\Br_n$ on $Q^n$ and, by restriction, on $Q^{n+1}$. It thus induces a map on homotopy quotients
\[
 \lst(a)\colon\Hur_n(Q)\to\Hur_{n+1}(Q).
\]
We define similarly a map $\rst(a)\colon \Hur_n(Q)\to\Hur_{n+1}(Q)$ by considering the map of sets $-\times a\colon Q^n\to Q^{n+1}$, sending $(a_1,\dots,a_n)\mapsto(a_1,\dots,a_n,a)$, which is equivariant with respect to the inclusion of groups $\Br_n\times\one_1\subset\Br_{n+1}$.
\end{defn}
\begin{defn}
Let $0\le p\le n-1$ and $0\le i\le p$.
We define a map of spaces
 \[
 \lst_i\colon Q^{p+1}\times\Hur_{n-p-1}(Q)\to Q^p\times\Hur_{n-p}(Q)
 \]
as the map induced on homotopy quotients by the map of sets $Q^n\overset{\cong}{\to} Q^n$ given by
 \[
  (a_0,\dots,a_{n-1})\mapsto (a_0,\dots,a_{i-1},a_{i+1},\dots,a_p,a_i^{a_{i+1}\dots a_p},a_{p+1},\dots,a_{n-1}),
 \]
 which is equivariant with respect to the inclusion of groups $\one_{p+1}\times \Br_{n-p-1}\subset\one_p\times\Br_{n-p}$.
\end{defn}
\begin{prop}\label{prop:descriptiondi}
The following diagram is commutative up to homotopy, where the vertical maps are the homotopy equivalences given by Lemma \ref{lem:htypeAnQpqqBrn}.
 \[
  \begin{tikzcd}
   Q^{p+1}\times\Hur_{n-p-1}(Q)\ar[r,"{\lst_i}"]\ar[d,"\simeq"] &
   Q^p\times\Hur_{n-p}(Q)\ar[d,"\simeq"]\\
   \cS^n_p\ar[r,"{d_i}"] & \cS^n_{p-1}.
  \end{tikzcd}
 \]
\end{prop}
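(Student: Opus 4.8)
The plan is to make both vertical homotopy equivalences of Lemma~\ref{lem:htypeAnQpqqBrn} --- call them $\phi_p$ and $\phi_{p-1}$ --- completely explicit at the level of the bar construction, and then to exhibit a single braid that carries one composite around the square into the other, with that braid itself supplying the homotopy. First I would record that the face map $d_i$ of $\cA_{n,\bullet}$ simply forgets the $i$-th arc, so that $d_i$ applied to the standard simplex $\bar\ualpha_{n,p}$ produces the configuration $d_i\bar\ualpha_{n,p}$ of straight vertical segments joining $I$ to $\bar z_{n,1},\dots,\bar z_{n,i},\bar z_{n,i+2},\dots,\bar z_{n,p+1}$ (the standard arcs with the one ending at $\bar z_{n,i+1}$ deleted). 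Since $\phi_p$ sends $\big((a_0,\dots,a_p);\xi\big)$ to the class of $(\bar\ualpha_{n,p};\ua)$, where $\ua=(a_0,\dots,a_{n-1})$ is any extension of $(a_0,\dots,a_p)$ by a representative of $\xi\in\Hur_{n-p-1}(Q)$, the composite $d_i\circ\phi_p$ lands on the class of $(d_i\bar\ualpha_{n,p};\ua)$ in $\cS^n_{p-1}$.

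Next I would identify the restandardising braid. The configuration $d_i\bar\ualpha_{n,p}$ agrees with the standard $(p-1)$-simplex $\bar\ualpha_{n,p-1}$ except that $\bar z_{n,i+1}$ now carries no arc while $\bar z_{n,i+2},\dots,\bar z_{n,p+1}$ still do. The braid $\beta=\sigma_{i+1}\sigma_{i+2}\cdots\sigma_p\in\Br_n$ slides the unpinned point $\bar z_{n,i+1}$ rightward past the $p-i$ still-pinned points, pushing their arcs one step to the left and depositing the freed point in position $p+1$; by inspection $\beta\cdot d_i\bar\ualpha_{n,p}=\bar\ualpha_{n,p-1}$. Iterating the braid action formula, each crossing conjugates the value originally at $\bar z_{n,i+1}$ by the next value it overtakes, so that
\[
\beta\cdot\ua=\big(a_0,\dots,a_{i-1},a_{i+1},\dots,a_p,\;a_i^{a_{i+1}\cdots a_p},\;a_{p+1},\dots,a_{n-1}\big),
\]
which is exactly the set-level map underlying $\lst_i$. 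Hence $\beta\cdot(d_i\bar\ualpha_{n,p};\ua)=(\bar\ualpha_{n,p-1};\beta\cdot\ua)$, and this last class is precisely the image of $\big((a_0,\dots,a_p);\xi\big)$ under $\phi_{p-1}\circ\lst_i$.

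Finally I would upgrade this equality of $\Br_n$-orbits to the asserted homotopy. Both composites are induced by the \emph{same} group inclusion $\one_{p+1}\times\Br_{n-p-1}\hookrightarrow\Br_n$ together with the two equivariant set maps $\ua\mapsto(d_i\bar\ualpha_{n,p};\ua)$ and $\ua\mapsto(\bar\ualpha_{n,p-1};\beta\cdot\ua)$, which differ by left multiplication by the fixed braid $\beta$. Crucially, $\beta$ only involves the generators $\sigma_{i+1},\dots,\sigma_p$, whose indices all differ by at least two from those of the generators $\sigma_{p+2},\dots,\sigma_{n-1}$ of $\one_{p+1}\times\Br_{n-p-1}$; hence $\beta$ centralises this subgroup, and the standard conjugation homotopy of the bar construction --- insertion of the $1$-simplex labelled by $\beta$ --- furnishes a simplicial homotopy between $d_i\circ\phi_p$ and $\phi_{p-1}\circ\lst_i$. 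I expect the genuine work to be the combinatorial bookkeeping of the middle step: one must check simultaneously that the geometric slide leaves the arcs on $\bar z_{n,1},\dots,\bar z_{n,i}$ untouched and transports those on $\bar z_{n,i+2},\dots,\bar z_{n,p+1}$ exactly onto the standard positions $\bar z_{n,i+1},\dots,\bar z_{n,p}$, and that the conjugations accumulated by the \emph{same} braid reproduce the $\lst_i$ formula on the nose. Reconciling these two effects of $\beta$ is the crux, and is exactly the point at which the compatibility between the arc combinatorics and the quandle action is used.
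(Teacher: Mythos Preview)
Your argument is correct and follows essentially the same route as the paper's: identify the restandardising braid carrying $d_i\bar\ualpha_{n,p}$ to $\bar\ualpha_{n,p-1}$, check that its action on $Q^n$ reproduces the $\lst_i$ formula, and then use that acting by a fixed group element on a homotopy quotient is homotopic to the identity. One caution on conventions: with the paper's left action the braid should be written $\beta=\sigma_p\sigma_{p-1}\cdots\sigma_{i+1}$ (this is the paper's $\fb_{n,p,i}$); your product $\sigma_{i+1}\cdots\sigma_p$, read as a left action, would apply $\sigma_p$ first and does \emph{not} yield the $\lst_i$ formula, even though your verbal description and claimed output are exactly right. Your final step---noting that $\beta$ centralises $\one_{p+1}\times\Br_{n-p-1}$, so the two equivariant set maps differ by a central element and the bar-construction homotopy applies directly---is a slightly cleaner packaging than the paper's, which routes through the conjugated stabiliser $(\one_p\times\Br_{n-p})^{\fb_{n,p,i}}$ before invoking the same homotopy.
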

\begin{proof}
Let $\bar\ualpha_{n,p}^{\hat i}\in\cA_{n,p-1}$ be the collection of arcs $(\bar\alpha_{n,p,0},\dots,\hat{\bar\alpha}_{n,p,i},\dots,\bar\alpha_{n,p,p})$ obtained from $\bar\ualpha_{n,p}$ by forgetting $\bar\alpha_{n,p,i}$ (see Figure \ref{fig:facesimplex}, left): then in $\cA_{n,\bullet}$ we have $d_i(\bar\ualpha_{n,p})=\bar\ualpha_{n,p}^{\hat i}$.
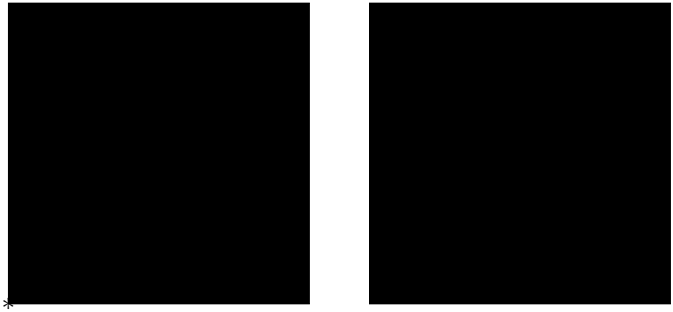
\begin{figure}[ht]
 \begin{tikzpicture}[scale=4,decoration={markings,mark=at position 0.3 with {\arrow{>}}}]
  \fill[black, opacity=.2] (0,0) rectangle (1,1);
  \draw[black] (0,0) rectangle (1,1);
  \node at (0,0) {$*$};
  \node at (.2,.5){$\bullet$};
  \node at (.4,.5){$\bullet$};
  \node at (.6,.5){$\bullet$};
  \node at (.8,.5){$\bullet$};
  \node at (.2,.55){$\bar{z}_{4,1}$};
  \node at (.4,.55){$\bar{z}_{4,2}$};
  \node at (.6,.55){$\bar{z}_{4,3}$};
  \node at (.8,.55){$\bar{z}_{4,4}$};
  \draw[thin, looseness=1] (.5,0) to[out=90, in=-90] node{$\bar\alpha_{4,2,1}$} (.4,.5); 
  \draw[thin, looseness=1] (.9,0) to[out=90, in=-90] node{$\bar\alpha_{4,2,2}$} (.6,.5);
 \begin{scope}[shift={(1.2,0)}]
  \fill[black, opacity=.2] (0,0) rectangle (1,1);
  \draw[black] (0,0) rectangle (1,1);
  \node at (.2,.5){$\bullet$};
  \node at (.4,.5){$\bullet$};
  \node at (.6,.5){$\bullet$};
  \node at (.8,.5){$\bullet$};
  \draw[looseness=1,->] (.2,.55) to[out=50,in=130] (.6,.55);
  \draw[looseness=1,->] (.39,.45) to[out=-130,in=-50] (.2,.45);
  \draw[looseness=1,->] (.6,.45) to[out=-130,in=-50] (.41,.45);
\end{scope}
\end{tikzpicture}
 \caption{On left, the 1-simplex $\bar\ualpha_{4,2}^{\hat 0}$; on right, the product of standard generators $\fb_{4,2,0}=\sigma_2\sigma_1$.}
 \label{fig:facesimplex}
\end{figure}

Consider moreover the product of standard generators
\[
\fb_{n,p,i}=\sigma_p\sigma_{p-1}\sigma_{p-2}\dots\sigma_{i+1}\in\Br_{p+1}\times\one_{n-p-1}\subset\Br_n,
\]
depicted in Figure \ref{fig:facesimplex}, right: this is the empty product, i.e. the neutral element in $\Br_n$, for $i=p$. Then the action of $\fb_{n,p,i}\in\Br_n$ on $\cA_{n,p-1}$ sends $\bar\ualpha_{n,p}^{\hat i}\mapsto\bar\ualpha_{n,p-1}$. It follows that the stabiliser of $\bar\ualpha_{n,p}^{\hat i}$ in $\Br_n$ is
$\pa{\one_p\times\Br_{n-p}}^{\fb_{n,p,i}}$. Repeating the argument of Lemma \ref{lem:htypeAnQpqqBrn} with $\bar\ualpha_{n,p}^{\hat i}$ instead of $\bar\ualpha_{n,p-1}$, we obtain the following: the inclusion of sets
$\set{\bar\ualpha_{n,p}^{\hat i}}\times Q^n\subset \cA_{n,p-1}\times Q^n$ and the inclusion of groups $\pa{\one_p\times\Br_{n-p}}^{\fb_{n,p,i}}\subset\Br_n$ give rise to a homotopy equivalence
\[
 \pa{\set{\bar\ualpha_{n,p}^{\hat i}}\times Q^n}\qq \pa{\one_p\times\Br_{n-p}}^{\fb_{n,p,i}} \overset{\simeq}{\to} \cA_n(Q)_{p-1}\qq\Br_n=\cS^n_{p-1},
\]
and the following square commutes on the nose, where the top map is induced on homotopy quotients by the obvious bijection $\set{\bar\ualpha_{n,p}}\times Q^n\cong \set{\bar\ualpha_{n,p}^{\hat i}}\times Q^n$, which is equivariant with respect to the injective group homomorphism
$\one_{p+1}\times \Br_{n-p-1}\cong\pa{\one_{p+1}\times \Br_{n-p-1}}^{\fb_{n,p,i}}\subset \pa{\one_p\times\Br_{n-p}}^{\fb_{n,p,i}}$:
 \[
  \begin{tikzcd}
   Q^{p+1}\times\Hur_{n-p-1}(Q)\ar[r]\ar[d,"\simeq"] &
   \pa{\set{\bar\ualpha_{n,p}^{\hat i}}\times Q^n}\qq \pa{\one_p\times\Br_{n-p}}^{\fb_{n,p,i}} \ar[d,"\simeq"]\\
   \cS^n_p=\cA_n(Q)_p\qq\Br_n\ar[r,"{d_i}"] & \cS^n_{p-1}=\cA_n(Q)_{p-1}\qq\Br_n.
  \end{tikzcd}
 \]
We can then construct a strictly commutative square, whose vertical maps are homotopy equivalences and whose horizontal maps are homeomorphisms
 \[
  \begin{tikzcd}[column sep=35pt]
   \pa{\set{\bar\ualpha_{n,p}^{\hat i}}\times Q^n}\qq \pa{\one_p\times\Br_{n-p}}^{\fb_{n,p,i}} \ar[r,"\fb_{n,p,i}\cdot-","\cong"']\ar[d,"\simeq"] &
   \pa{\set{\bar\ualpha_{n,p-1}}\times Q^n}\qq \pa{\one_p\times\Br_{n-p}} \ar[d,"\simeq"]\\
   \cS^n_{p-1}=\cA_n(Q)_{p-1}\qq\Br_n\ar[r,"\fb_{n,p,i}\cdot-","\cong"'] & \cS^n_{p-1}=\cA_n(Q)_{p-1}\qq\Br_n.
  \end{tikzcd}
 \]
The top horizontal map is induced by the bijection of sets $\fb_{n,p,i}\colon \pa{\set{\bar\ualpha_{n,p}^{\hat i}}\times Q^n} \cong \pa{\set{\bar\ualpha_{n,p-1}}\times Q^n}$, given by the action of $\fb_{n,p,i}$, together with the isomorphism of groups
$\pa{\one_p\times\Br_{n-p}}^{\fb_{n,p,i}}\cong \pa{\one_p\times\Br_{n-p}}$, given by conjugation by $\fb_{n,p,i}^{-1}$ inside $\Br_n$; note that the aforementioned bijection of sets sends
\[
(\bar\ualpha_{n,p}^{\hat i}; a_0,\dots, a_{n-1})\mapsto (\bar\ualpha_{n,p-1};a_0,\dots,a_{i-1},a_{i+1},\dots,a_p,a_i^{a_{i+1}\dots a_p},a_{p+1},\dots,a_{n-1}).
\]

The bottom horizontal map has a similar description, but involving the automorphism of the set $\cA_n(Q)_{p-1}$ given by the action of $\fb_{n,p,i}$, together with the inner automorphism of the group $\Br_n$ give by conjugation by $\fb_{n,p,i}^{-1}$.

We now appeal to the following standard fact: if a group $G$ acts on a set $X$ and if $g\in G$, then the homeomorphism $g\cdot-\colon X\qq G\to X\qq G$, induced by the action of $g$ on $X$ and by conjugation by $g^{-1}$ on $G$, is homotopic to the identity of $X\qq G$. In our case, the bottom horizontal map $\fb_{n,p,i}\cdot-$ in the last square is homotopic to the identity of $\cS^n_{p-1}$.

We conclude by gluing the two squares along their common vertical side, after noticing that the composition of the two top horizontal maps is precisely $\lst_i$.
\end{proof}
Note that the map $\lst_i\colon Q^{p+1}\times\Hur_{n-p-1}(Q)\to Q^p\times\Hur_{n-p}(Q)$, as a map with codomain a product, can be described by giving two maps $Q^{p+1}\times\Hur_{n-p-1}(Q)\to Q^p$ and $Q^{p+1}\times\Hur_{n-p-1}(Q)\to \Hur_{n-p}(Q)$. The first of these maps is the projection $Q^{p+1}\times\Hur_{n-p-1}(Q)\to Q^{p+1}$ followed by the map 
$Q^{p+1}\to Q^p$ given by $(a_0,\dots,a_p)\mapsto (a_0,\dots,\hat a_i,\dots,a_p)$. The second of these maps, restricted to the slice $(a_0,\dots,a_p)\times \Hur_{n-p-1}(Q)\cong \Hur_{n-p-1}(Q)$, is the stabilisation map $\lst(a_i^{a_{i+1}\dots a_p})$.

\subsection{Action of \texorpdfstring{$\Hur(Q)$}{Hur(Q)} on \texorpdfstring{$\cS_\bullet$}{Sdot}}
\begin{nota}
\label{nota:cSdot}
 Recall Notation \ref{nota:cSndot}. We denote by $\cS_\bullet$ the disjoint union of augmented semisimplicial spaces $\coprod_{n\ge0}\cS^n_\bullet$, again leaving $Q$ implicit.
\end{nota}

For all $n,m\ge0$ there is a map of augmented semisimplicial sets $\cA_{n,\bullet}\to \cA_{n+m,\bullet}$ given by ``adding $m$ points on the right'': more precisely, we send the isotopy class of the collection of arcs $\alpha_0,\dots,\alpha_p\colon[0,1]\to D$, representing a $p$-simplex in $\cA_{n,p}$, to the isotopy class of the collection of arcs $\chi^m_n\circ\alpha_0,\dots,\chi^m_n\circ\alpha_p\colon [0,1]\to D$, where $\chi^m_n\colon D\to D$ sends $(x,y)\mapsto (\frac{m+1}{m+n+1}x,y)$. We consider the action of $\Br_n\times\Br_m$ on $\cA_{n,\bullet}$ given by projecting $\Br_n\times\Br_m\to\Br_n$ and then letting $\Br_n$ act; then the map $\cA_{n,\bullet}\to \cA_{n+m,\bullet}$ is equivariant with respect to the actions of $\Br_n\times\Br_m$ on $\cA_{n,\bullet}$ and of $\Br_{n+m}$ on $\cA_{n+m,\bullet}$, along the inclusion
of groups $\Br_n\times \Br_m\subset\Br_{n+m}$.

Similarly, the identification of sets $Q^n\times Q^m\cong Q^{n+m}$ is equivariant with respect to the action of $\Br_n\times\Br_m$ and $\Br_{n+m}$ on the two sets, respectively. Taking cartesian products, we obtain a map of augmented semisimplicial sets 
\[
\cA_n(Q)_\bullet\times Q^m\to \cA_{n+m}(Q)_\bullet,
\]
which is equivariant with respect to the action of $\Br_n\times\Br_m$ and $\Br_{n+m}$ on the two augmented semisimplicial sets. Taking homotopy quotients, we finally obtain a map of augmented semisimplicial spaces
\[
 \cS^n_\bullet\times\Hur_m(Q)\to\cS^{n+m}_\bullet.
\]
The following is immediate.
\begin{prop}
\label{prop:rightaction}
 The maps $\cS^n_\bullet\times\Hur_m(Q)\to\cS^{n+m}_\bullet$ constructed above assemble into a right action of the topological monoid $\Hur(Q)$ on the augmented semisimplicial space $\cS_\bullet$.
\end{prop}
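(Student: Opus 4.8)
The plan is to verify the action axioms one level below, at the level of the augmented semisimplicial sets $\cA_n(Q)_\bullet$ equipped with their $\Br_n$-actions, and then transport the result through the homotopy-quotient construction. Recall that the structure maps $\cS^n_\bullet\times\Hur_m(Q)\to\cS^{n+m}_\bullet$ arise by applying the bar construction $B_\bullet(*,-,-)$ to the $(\Br_n\times\Br_m)$-equivariant maps of sets $\cA_n(Q)_\bullet\times Q^m\to\cA_{n+m}(Q)_\bullet$, along the concatenation inclusions $\Br_n\times\Br_m\subset\Br_{n+m}$. Since the bar construction is functorial and the monoid structure on $\Hur(Q)$ is strict in our model, it would suffice to exhibit a strict right action of the graded monoid $\coprod_{m\ge0}Q^m$ (with concatenation) on the graded augmented semisimplicial set $\coprod_{n\ge0}\cA_n(Q)_\bullet$ that is equivariant along the inclusions $\Br_n\times\Br_m\subset\Br_{n+m}$; applying $B_\bullet$ levelwise would then yield the desired strict action of $\Hur(Q)$ on $\cS_\bullet$.

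For associativity I would fix $n,m,k\ge0$ and check that the two composites
\[
\cA_n(Q)_\bullet\times Q^m\times Q^k\to\cA_{n+m+k}(Q)_\bullet,
\]
one combining the $Q^m$-factor first and then the $Q^k$-factor, the other first concatenating $Q^m\times Q^k\cong Q^{m+k}$, agree. On the $Q$-coordinates this is the strict associativity of concatenation $Q^n\times Q^m\times Q^k\cong Q^{n+m+k}$, and on the group coordinates it is the strict associativity of the concatenation inclusions $\Br_n\times\Br_m\times\Br_k\subset\Br_{n+m+k}$. It remains to treat the arc coordinate, where the first composite is induced by $\chi^k_{n+m}\circ\chi^m_n\colon D\to D$ and the second by $\chi^{m+k}_n\colon D\to D$. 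These two self-maps of $D$ differ (their horizontal scaling factors are $\frac{m+1}{m+n+1}\cdot\frac{k+1}{k+n+m+1}$ and $\frac{m+k+1}{m+k+n+1}$ respectively), but both compress a given arc system into the left portion of $D$ and place the $m+k$ new points to its right in the prescribed order, so they induce the \emph{same} map on the sets $\cA_{n,p}$ of isotopy classes. Unitality is the analogous observation that $\chi^0_n$, although not the identity of $D$, induces the identity on isotopy classes, while concatenation with $Q^0=\{*\}$ and with $\one_0\subset\Br_0$ is strictly neutral.

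The one genuine subtlety, and the point I would be most careful about, is precisely this passage from self-maps of $D$ to maps on isotopy classes: the scaling homeomorphisms $\chi$ compose associatively and act as the identity for $m=0$ only after passing to $\pi_0$ of the relevant spaces of arc embeddings, i.e. only on $\cA_{n,\bullet}$ rather than on $D$ itself. Once this is granted, the three axioms hold strictly at the level of $\coprod_n\cA_n(Q)_\bullet$, and applying the bar construction levelwise---which preserves products and sends the equivariant strict action to a strict action of the realised topological monoid $\Hur(Q)$ on $\cS_\bullet$---completes the argument.
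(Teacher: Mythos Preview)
Your argument is correct, and in fact more than the paper offers: the paper simply declares the proposition ``immediate'' and gives no proof. Your strategy---verify the monoid action strictly at the level of the $\Br_n$-sets $\cA_n(Q)_\bullet$ acted on by the graded monoid $\coprod_m Q^m$, then push through the bar construction, which preserves products---is exactly the natural way to justify such a statement.

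One remark on the ``genuine subtlety'' you flag. You correctly compute that, with the scaling factor $\tfrac{m+1}{m+n+1}$ printed in the paper, the maps $\chi^m_n$ neither compose strictly nor give $\chi^0_n=\mathrm{id}_D$; in fact with that factor $\chi^m_n$ does not even send $\bar P_n$ onto the first $n$ points of $\bar P_{n+m}$, so the image arcs are not literally simplices of $\cA_{n+m,\bullet}$ without a further isotopy. This strongly suggests a typo: the intended factor is $\tfrac{n+1}{n+m+1}$, which does send $\bar z_{n,i}\mapsto\bar z_{n+m,i}$, and with it one has $\chi^k_{n+m}\circ\chi^m_n=\chi^{m+k}_n$ on the nose and $\chi^0_n=\mathrm{id}_D$, so the action is strict already before passing to isotopy classes. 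Your isotopy-class argument is nonetheless a valid and robust way to handle the construction as written.
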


\section{The spectral sequence argument}
Let $R$ be a commutative ring.
Recall that for an augmented semisimplicial space $X_\bullet$ there is a spectral sequence in homology, with first page $E^1_{p,q}=H_q(X_p;R)$ and limit $H_{p+q+1}(X_{-1},|X|;R)$.
For $X=\cS^n_\bullet$ we get $E^1_{p,q}=H_q(\cS^n_p;R)$, and the limit is $H_{p+q+1}(\cS^n_{-1},|\cS^n_\bullet|;R)$, which is zero for all $p,q\in\Z$ by Lemma \ref{lem:damiolini}.

For $p,q\ge0$, the $E^1$-differential $d^1_{p,q}\colon E^1_{p,q}\to E^1_{p-1,q}$ is given by the map
\[
\sum_{j=0}^p (-1)^j(d_j)_*\colon H_q(\cS^n_p;R)\to H_q(\cS^n_{p-1};R),
\]
i.e. it is the alternating sum of the maps induced in homology by the face maps (and by the augmentation).

By Proposition \ref{prop:rightaction}, we can put together these spectral sequences for varying $n\ge0$, obtaining a spectral sequence of right $H_*(\Hur(Q);R)$-modules, and in particular of right $A\otimes R$-modules. We will use this to prove Theorem \ref{thm:main1}.

\subsection{The Koszul-like complex}\label{subsec:Koszulcomplex}
We aim at proving Theorem \ref{thm:main1} by induction on $i$. The statement is obvious for $i=0$, as the ring $A\otimes R$ is finitely generated over itself. We focus henceforth on the inductive step: we fix $\nu\ge0$, assume that $H_i(\Hur(Q);R)$ is finitely generated over $A\otimes R$ for all $0\le i\le \nu$, and aim at proving that $H_{\nu+1}(\Hur(Q);R)$ is also finitely generated over $A\otimes R$.

In the previous paragraph, all $A\otimes R$-modules are meant as \emph{left} $A\otimes R$-modules, as in the statement of Theorem \ref{thm:main1}. Yet the Pontryagin ring structure of $H_*(\Hur(Q);R)$ makes $H_i(\Hur(Q);R)$ into an $A\otimes R$-bimodule, i.e. $H_i(\Hur(Q);R)$ is endowed with compatible structures of left $A\otimes R$-module and right $A\otimes R$-module.

The following definition is taken from \cite[Subsection 4.1]{EVW:homstabhur} and slightly adapted to our purposes.
\begin{defn}
\label{defn:Kcomplex}
Let $M$ be an $A\otimes R$-bimodule. We define a chain complex $K_*(M)$ of right $A\otimes R$-modules, concentrated in degrees $*\ge-1$. We set $K_p(M)=RQ^{p+1}\otimes_R M$, where for a set $S$ we denote by $RS$ the free $R$-module generated by $S$. We put on $K_p(M)$ the right $A\otimes R$-module structure coming from $M$.

The differential $d_p\colon K_p(M)\to K_{p-1}(M)$ has the form $d_p=\sum_{j=0}^i(-1)^jd_{p,j}$, where $d_{p,j}$ sends
\[
(a_0,\dots,a_p)\otimes \mu\mapsto(a_0,\dots,\hat a_j,\dots,a_p)\otimes [a_i^{a_{i+1}\dots a_p}]\cdot \mu,
\]
for $(a_0,\dots,a_p)\in Q^{p+1}$ and $\mu\in M$.
\end{defn}
Our interest in the chain complex $K_*(H_q(\Hur(Q)))$ comes from the fact that it coincides with the $q$\textsuperscript{th} row of the $E^1$-page of the spectral sequence associated with the augmented semisimplicial space $\cS_\bullet$: this is a consequence of Proposition \ref{prop:descriptiondi} and the general description of the first page of the spectral sequence associated with the skeletal filtration of an augmented semisimplicial space.

In \cite{EVW:homstabhur}, $K_*(M)$ is referred to as a ``Koszul-like complex'', and in fact in \cite{ORW:Hurwitz} it is shown that $K_*(M)$ is isomorphic to the Koszul complex of the augmented dg ring $C_*(\Hur(Q);R)$ acting on the $A$-module $M$, where the action is given by the dg ring map $C_*(\Hur(Q);R)\to A\otimes R$ sending each 0-chain to its homology class and vanishing in higher degree, and where the augmentation $C_*(\Hur(Q);R)\to R$ is the composite of the previous dg ring map with the ring map $A\otimes R\to R$
killing the positive-weight part of $A\otimes R$.
In particular $H_i(K_*(M))\cong\Tor_i^{C_*(\Hur(Q);R)}(M,R)$.
See also \cite[Remark 7.2]{ETW:shufflealgebras}: the Koszul dual of the augmented dga $C_*(\Hur(Q);R)$, i.e. $\mathrm{Ext}^*_{C_*(\Hur(Q);R)}(R;R)$, can be identified with the quantum shuffle $R$-algebra generated by the braided $R$-module $\Hom_R(RQ,R)$, i.e. the $R$-linear dual of the free $R$-module $RQ$ generated by the set $Q$. The dual $R$-coalgebra can be canonically identified, as a graded $R$-module, with $\bigoplus_{i\ge0} (RQ)^{\otimes_R i}$.

\subsection{\texorpdfstring{$G$}{G}-twists} In the following we consider each left/right/bimodule over $A\otimes R$ as a left/right/bimodule over $A$ and over $R$ by considering the canonical ring homomorphisms $A\to A\otimes R$ and $R\to A\otimes R$.
\begin{defn}\label{defn:Gtwist}
Let $M$ be an $A\otimes R$-bimodule. A \emph{$G$-twist} on $M$ is a right action of $G$ on $M$, denoted $(m,g)\mapsto m^g$ for $m\in M$ and $g\in G$, satisfying the following: 
\begin{enumerate}
\item for all $r\in R$ we have $r\cdot m=m\cdot r$;
\item for all $a,b\in Q$ we have $([a]\cdot m)^b=[a^b]\cdot m^b$ and $(m\cdot [a])^b=m^b\cdot[a^b]$;
\item for all $a\in Q$ we have $m\cdot [a]=[a]\cdot m^{a}$.
\end{enumerate}
\end{defn}

For example, for $i\ge0$ the $A\otimes R$-bimodule $H_i(\Hur(Q);R)$ admits a $G$-twist as follows. The group $G$ acts on the right on the quandle $Q$ by conjugation: the element $g$ sends $a\in Q$ to $a^g=g^{-1}ag\in Q$. We can then let $G$ act diagonally on right on the set $Q^n$; as a matter of notation, the element $g\in G$ sends $(a_1,\dots,a_n)\mapsto(a_1,\dots,a_n)^g=(a_1^g,\dots,a_n^g)$. This action commutes with the left action of $\Br_n$ on $Q^n$, and thus it induces a right action of $G$ on $\Hur_n(Q)=Q^n\qq\Br_n$. For $i\ge0$, we can then take $i$\textsuperscript{th} homology with coefficients in $R$ and consider all values of $n$ at the same time: we obtain a right action of $G$ on the weighted $A\otimes R$-bimodule $H_i(\Hur(Q);R)$.
\begin{lem}\label{lem:Gtwist}
 The right action of $G$ on the $A\otimes R$-bimodule $H_i(\Hur(Q);R)$ is a $G$-twist.
\end{lem}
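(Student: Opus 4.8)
The plan is to verify the three axioms of Definition \ref{defn:Gtwist} directly from the geometric description of the $G$-action on $\Hur_n(Q) = Q^n\qq\Br_n$, since each axiom is a statement about the left/right multiplication structure and the $G$-action, all of which are induced from maps of sets $Q^n \to Q^m$ that commute with the braid actions. The key point is that everything takes place at the level of the diagonal right $G$-action $(a_1,\dots,a_n)^g = (a_1^g,\dots,a_n^g)$ on the sets $Q^n$, which commutes with $\Br_n$, together with the concatenation maps defining the bimodule structure; passing to homotopy quotients and then to homology is functorial, so it suffices to check the corresponding identities of maps of spaces (or even of the underlying equivariant maps of sets) before applying $H_i(-;R)$.

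First I would treat axiom (1). The $R$-module structure on $H_i(\Hur(Q);R)$ is the scalar action of the coefficient ring, which is central and commutes with every map induced by a map of spaces; in particular it commutes with the $G$-action, giving $r\cdot m = m\cdot r$ for free. Next, for axiom (2), I would observe that left multiplication by $[a]$ is induced by $\lst(a)$, i.e.\ by the set map $Q^n \to Q^{n+1}$, $(a_1,\dots,a_n)\mapsto (a,a_1,\dots,a_n)$, and right multiplication by $[a]$ by $\rst(a)$, $(a_1,\dots,a_n)\mapsto(a_1,\dots,a_n,a)$. Applying $g\in G$ diagonally afterwards conjugates every entry, so $\lst(a)$ followed by $(-)^g$ agrees with $(-)^g$ followed by $\lst(a^g)$ on the level of set maps $Q^n\to Q^{n+1}$; the same holds for $\rst$. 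Since these identities hold as equalities of $\Br$-equivariant maps of sets, they descend to homotopy quotients and induce the desired identities $([a]\cdot m)^b = [a^b]\cdot m^b$ and $(m\cdot[a])^b = m^b\cdot [a^b]$ after taking homology.

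The main substance is axiom (3), $m\cdot[a] = [a]\cdot m^a$. Here I would compare the two set maps $Q^n \to Q^{n+1}$: right-stabilisation by $a$ sends $(a_1,\dots,a_n)\mapsto(a_1,\dots,a_n,a)$, whereas left-stabilisation by $a$ composed with the diagonal $a$-conjugation sends $(a_1,\dots,a_n)\mapsto (a,a_1,\dots,a_n)\mapsto (a^a, a_1^a,\dots,a_n^a)=(a,a_1^a,\dots,a_n^a)$. These two maps are not equal, but I expect them to differ precisely by the braid $\sigma_n\sigma_{n-1}\cdots\sigma_1 \in \Br_{n+1}$ that cyclically slides the last (respectively first) strand past the others; the quandle conjugation appearing in the braid action on $Q^{n+1}$ is exactly what converts one tuple into the other. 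Thus the two maps become equal after post-composing with the action of a single element of $\Br_{n+1}$, and hence induce homotopic maps to $\Hur_{n+1}(Q)$, giving equal maps on homology. This braid-bookkeeping step is the one place requiring genuine care, since one must track how conjugation by $a$ interacts with the braid-group action; I would make it precise by writing out the action of $\sigma_n\cdots\sigma_1$ on $(a_1,\dots,a_n,a)$ and checking it produces $(a,a_1^a,\dots,a_n^a)$, after which the equality in homology follows from the standard fact (already invoked in the proof of Proposition \ref{prop:descriptiondi}) that the self-homeomorphism of a homotopy quotient induced by an element acting on the set and by conjugation on the group is homotopic to the identity.
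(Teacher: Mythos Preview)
Your proposal is correct and follows essentially the same approach as the paper: axioms (1) and (2) are verified from the $G$-equivariance of concatenation, and axiom (3) is reduced to showing that $\rst(a)$ and $\lst(a)\circ(-)^a$ differ by the action of a single braid element on $Q^{n+1}$, hence become homotopic after passing to the homotopy quotient. One small point: under the paper's left-action convention the element you want is $\bar\fb_{n+1}=\sigma_1\sigma_2\cdots\sigma_n$ (apply $\sigma_n$ first), not $\sigma_n\cdots\sigma_1$; your explicit check would have revealed this immediately.
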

\begin{proof}
The action of $R$ on $H_*(\Hur(Q);R)$ comes from the multiplication of $R$, which we assume to be commutative: this ensures that condition (1) in Definition \ref{defn:Gtwist} is satisfied.
We observe moreover that $\Hur(Q)$ is a topological monoid and that the right action of $G$ on $\Hur(Q)$ is an action by automorphisms of topological monoids: indeed for all $n,n\ge0$, the concatenation map $Q^n\times Q^m\overset{\cong}{\to} Q^{n+m}$ is $G$-equivariant; taking homotopy quotients with respect to the groups $\Br_n\times\Br_m\subset\Br_{n+m}$, we obtain that the multiplication $\Hur_n(Q)\times\Hur_m(Q)\to\Hur_{n+m}(Q)$ is $G$-equivariant. Taking homology, we obtain that $G$ acts on right on $H_*(\Hur(Q);R)$ by automorphisms of rings, and this implies that condition (2) is satisfied.

Now let $n\ge0$ and let $a\in Q$ be fixed. We denote by $\bar\fb_{n+1}\in\Br_{n+1}$ the product of standard generators $\sigma_1\dots\sigma_n$; then we have a commutative square of sets, and a commutative square of groups
 \[
  \begin{tikzcd}
     Q^n\ar[r,"-\times a"] \ar[d,"{(-)^a}"'] & Q^{n+1}\ar[d,"\bar\fb_{n+1}\cdot-"]& &    \Br_n\ar[d,equal]\ar[r,"-\times\one_1"] &\Br_{n+1}\ar[d,"(-)^{\bar\fb_{n+1}^{-1}}"]\\
   Q^n\ar[r,"a\times -"'] & Q^{n+1},& & \Br_n\ar[r,"\one_1\times-"']&\Br_{n+1},
  \end{tikzcd}
 \]
such that each map of sets is equivariant with respect to the corresponding map of groups. Taking homotopy quotients, we obtain a square of spaces, that commutes on the nose
\[
 \begin{tikzcd}
  \Hur_n(Q)\ar[r,"\rst(a)"]\ar[d,"(-)^a"'] &\Hur_{n+1}(Q)\ar[d,"\bar\fb_{n+1}\cdot-"]\\
  \Hur_n(Q)\ar[r,"\lst(a)"']&\Hur_{n+1}(Q).
 \end{tikzcd}
\]
The right vertical map in the last diagram is homotopic to the identity; we conclude that $\rst(a)\colon\Hur_n(Q)\to\Hur_{n+1}(Q)$ is homotopic to the composition $\lst(a)\circ(-)^a$; taking $i$\textsuperscript{th} homology, we obtain that also condition (3) in Definition \ref{defn:Gtwist} is satisfied.
\end{proof}

\begin{lem}\label{lem:trivialaction}
 Let $M$ be an $A\otimes R$-bimodule with a $G$-twist, and let $j\ge0$. Then $H_j(K_*(M))$ is a trivial right $A\otimes R$-module, in the sense that right multiplication by any element of $A\otimes R$ of positive weight is zero.
\end{lem}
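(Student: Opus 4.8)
The plan is to prove the stronger statement that, for each fixed $a\in Q$, the chain map $R_a\colon K_*(M)\to K_*(M)$ given on $K_p(M)=RQ^{p+1}\otimes_R M$ by $(a_0,\dots,a_p)\otimes\mu\mapsto(a_0,\dots,a_p)\otimes(\mu\cdot[a])$ is null-homotopic. This $R_a$ is indeed a chain map, since the differential of $K_*(M)$ only involves the \emph{left} $A\otimes R$-action, which commutes with the right action defining $R_a$. Granting null-homotopy, the lemma follows formally: the positive-weight part of $A\otimes R$ is the two-sided ideal generated by the classes $[a]$, $a\in Q$, so every positive-weight element is an $R$-linear combination of monomials $[a_1]\cdots[a_k]$ with $k\ge1$; right multiplication by such a monomial equals $R_{[a_2]\cdots[a_k]}\circ R_{a_1}$, which is null-homotopic because $R_{a_1}$ is and precomposing a null-homotopic map by a chain map preserves null-homotopy, and $R$-linearity (with $R$ central of weight $0$) completes the reduction.

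To build the null-homotopy I would append $a$ as a new rightmost letter while simultaneously applying the $G$-twist by $a$: define $h_p\colon K_p(M)\to K_{p+1}(M)$ for $p\ge-1$ by
\[
 h_p\bigl((a_0,\dots,a_p)\otimes\mu\bigr)=(-1)^{p+1}(a_0,\dots,a_p,a)\otimes\mu^a,
\]
and then compute $d_{p+1}h_p+h_{p-1}d_p$. The single face of $h_p(\xi)$ deleting the appended letter $a$ left-multiplies by $[a]$ and yields $(a_0,\dots,a_p)\otimes[a]\cdot\mu^a$; by axiom (3) of Definition \ref{defn:Gtwist} this is exactly $(a_0,\dots,a_p)\otimes\mu\cdot[a]=R_a(\xi)$, with the two signs combining to $+1$. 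Each remaining face deletes some $a_j$ with $0\le j\le p$, left-multiplying by $[a_j^{a_{j+1}\cdots a_p a}]$ and acting on $\mu^a$; by axiom (2) one has $[a_j^{a_{j+1}\cdots a_p a}]\cdot\mu^a=\bigl([a_j^{a_{j+1}\cdots a_p}]\cdot\mu\bigr)^a$, which is exactly the corresponding term of $h_{p-1}(d_p\xi)$ with the opposite sign, so all interior terms cancel. The net outcome is $d_{p+1}h_p+h_{p-1}d_p=R_a$, and checking the formula directly in degrees $-1$ and $0$ confirms the relation at the bottom of the complex (where $d_0 h_{-1}(\mu)=[a]\cdot\mu^a=\mu\cdot[a]$).

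The main obstacle is the conjugation bookkeeping. Appending $a$ without twisting $\mu$ fails, because the differential conjugates each deleted letter $a_j$ by everything to its right, and the presence of the new rightmost $a$ replaces $a_j^{a_{j+1}\cdots a_p}$ by $a_j^{a_{j+1}\cdots a_p a}$; these interior terms then fail to cancel against the $h\circ d$ terms. The twist $\mu\mapsto\mu^a$ is forced precisely to absorb this discrepancy, and the two compatibility axioms of a $G$-twist are exactly what make the computation close up: axiom (2) reconciles the conjugated left-multiplications appearing in the interior faces, while axiom (3) identifies the surviving boundary term with right multiplication by $[a]$. Getting the degree-dependent sign $(-1)^{p+1}$ right is essential, since it is what makes the interior faces of $h_p$ cancel against $h_{p-1}d_p$ rather than reinforce them.
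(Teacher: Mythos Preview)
Your proof is correct and follows exactly the same approach as the paper: the chain homotopy $h_p$ you write down coincides verbatim with the paper's $\cH_a$, and the verification proceeds identically. Your account is in fact slightly more careful in attributing the roles of the $G$-twist axioms (axiom (2) for the interior faces, axiom (3) for the surviving term), whereas the paper's proof sketch cites the axioms somewhat loosely.
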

\begin{proof}
 This is similar to \cite[Lemma 4.11]{EVW:homstabhur}, but we repeat here the argument. Since the positive-weight ideal $(A\otimes R)_+\subset A\otimes R$ is generated by the elements $[a]=[a]\otimes 1$ for $a\in Q$, it suffices to prove that $-\cdot [a]$ induces the zero map on $H_j(K_*(M))$, and for this one proves that the chain map of chain complexes of abelian groups $-\cdot [a]\colon K_*(M)\to K_*(M)$ is chain homotopic to the zero chain map.

 One defines a chain homotopy $\cH_a\colon K_*(M)\to K_{*+1}(M)$ by sending
 \[
 \cH_a\colon (a_0,\dots,a_p)\otimes \mu\mapsto (-1)^{p+1}(a_0,\dots,a_p,a)\otimes \mu^a,
 \]
 for $p\ge-1$, $(a_0,\dots,a_p)\in Q^{p+1}$ and $\mu\in M$. Using property (1) from Definition \ref{defn:Gtwist}, one checks that $(d\circ \cH_a+\cH_a\circ d)((a_0,\dots,a_p)\otimes \mu)$ is equal to $(a_0,\dots,a_p)\otimes [a]\cdot \mu^a$; using property (2), one identifies the latter with $(a_0,\dots,a_p)\otimes \mu\cdot[a]$.
\end{proof}

\subsection{Proof of Theorem \ref{thm:main1}}
Let $R$ be a Noetherian ring.
We start by observing that if $M$ is an $A\otimes R$-bimodule that is finitely generated as a right $A\otimes R$-module, then $K_*(M)$ is a chain complex of finitely generated right $A\otimes R$-modules. By Lemma \ref{lem:Anoetherian} $A\otimes R$ is Noetherian, hence also $H_i(K_*(M))$ is a finitely generated right $A\otimes R$-module; it then follows from Lemma \ref{lem:trivialaction}
that $H_j(K_*(M))$ is finitely generated also as a module over the weight-zero subring $R\subset A\otimes R$, and in particular $H_j(K_*(M))$ vanishes in high enough weight, for all $j\ge-1$.
We also observe that if $M$ is an $A\otimes R$-bimodule with a $G$-twist, then $M$ is finitely generated as a left $A\otimes R$-module if and only if it is finitely generated as a right $A\otimes R$-module.

Recall now the inductive strategy that we have established at the beginning of Subsection \ref{subsec:Koszulcomplex}.
We can apply the above discussion applies to the modules $M=H_0(\Hur(Q);R),\dots, H_\nu(\Hur(Q);R)$, and
we obtain that for $n$ large enough, with in particular $n\ge \nu+3$, the spectral sequence associated with $\cS^n_\bullet$ vanishes on a large part of its $E^2$-page: we have $E^2_{p,q}=0$ for $q\le\nu$ and $p+q\le\nu+1$. This implies in particular that the differential $d^1_{0,\nu+1}\colon E^1_{0,\nu+1}\to E^1_{-1,\nu+1}$ must be surjective, otherwise we would have $E^2_{-1,\nu+1}\neq0$ and by the vanishing of $E^2_{p,q}$ for all $p\ge1$, $q\ge0$ with $p+q=\nu+1$ we would have no other non-trivial differential hitting $E^2_{-1,\nu+1}$ and forcing its vanishing on the $E^\infty$-page, as must happen by Lemma \ref{lem:damiolini}.

The differential $d^1_{0,\nu+1}$ can be identified with the map $RQ\otimes H_{\nu+1}(\Hur(Q);R)\to H_{\nu+1}(\Hur(Q);R)$ sending $q_i\otimes m\mapsto q_i\cdot m=(q_i\otimes 1)\cdot m$; if this map is surjective for $n$ large enough, say $n\ge\bar n$, then the left $A\otimes R$-module $H_{\nu+1}(\Hur(Q);R)$ is generated by the sub-abelian group $\bigoplus_{n=0}^{\bar n} H_{\nu+1}(\Hur_n(Q);R)$. We conclude the proof of Theorem \ref{thm:main1} by observing that the latter is a finite direct sum of finitely generated abelian groups: indeed, for each $n\ge0$, the space $\Hur_n(Q)$ has the homotopy type of a finite cover of $B\Br_n$, and $B\Br_n$ is homotopy equivalent to the $n$\textsuperscript{th} unordered configuration space of points in the plane, in particular it has the the homotopy type of a finite CW complex. This concludes the proof of Theorem \ref{thm:main1}.

\section{Quasi-polynomial growth of Betti numbers}
In this section we analyse the growth of the homology groups $H_i(\Hur_n(Q);\F)$, where $\F$ is a field, for fixed $i$ and increasing $n$, and prove Corollaries \ref{cor:main1}, \ref{cor:main2}.
\subsection{Growth of weighted dimensions of \texorpdfstring{$B$}{B} and \texorpdfstring{$A$}{A}}
We first analyse the growth of the weighted dimensions (over $\Z$, aka rank) of $B$ and $A$. Recall that $\pi_0(\Hur(Q))=\pi_0(\coprod_{n\ge0}\Hur_n(Q))=\coprod_{n\ge0}\pi_0(\Hur_n(Q))$ is a weighted set; also $\pi_0^\ell(\Hur(Q))$ is a weighted set, concentrated in weights that are multiple of $\ell$. Similarly, $A$ and $B$ are weighted rings, with $B$ commutative and concentrated in weights multiple of $\ell$. Notice also that both $A$ and $B$ are free as $\Z$-modules.
\begin{prop}
 \label{prop:Agrowth}
 There is a polynomial $p_B(t)\in\Q[t]$ of degree $k(G,Q)-1$ such that, for $n$ large enough, $\dim_\Z B_{\ell n}=p_B(n)$.
 
 Similarly, there is a quasi-polynomial $p_A(t)\in\Q^\Z(t)$ of degree $k(G,Q)-1$ and period dividing $\ell$ such that $\dim_\Z A_{n}=p_A|_n(n)$ for $n$ large enough.
\end{prop}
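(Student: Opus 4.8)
The plan is to reduce everything to the computation of a single Krull dimension, namely $\dim(B\otimes\Q)=k(G,Q)$, and then feed this into the classical Hilbert polynomial theorem for standard graded algebras and modules. Since $B$ is free as a $\Z$-module, $\dim_\Z B_{\ell n}=\dim_\Q(B\otimes\Q)_{\ell n}$. Regrading $B\otimes\Q$ by weight divided by $\ell$, it becomes a standard graded $\Q$-algebra (generated in degree $1$ by the classes $[q_i]^\ell$, with degree-$0$ part $\Q$), so the Hilbert polynomial theorem shows that $\dim_\Q(B\otimes\Q)_{\ell n}$ agrees for $n\gg0$ with a polynomial $p_B(n)\in\Q[n]$ of degree $\dim(B\otimes\Q)-1$. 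It therefore suffices to prove $\dim(B\otimes\Q)=k(G,Q)$; here I call a subgroup $H\subseteq G$ \emph{$Q$-generated} if $H=\langle Q\cap H\rangle$.

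For the upper bound $\dim(B\otimes\Q)\le k(G,Q)$, which I expect to be the main obstacle, I would realise $B\otimes\Q$ as a quotient of $\Q[x_a:a\in Q]/I$, where $x_a$ corresponds to $[a]^\ell$ and $I$ is generated by the elements $x_b(x_a-x_{a^b})$ supplied by Lemma \ref{lem:relationsB}, and then compute $\dim(\Q[x_a]/I)$ directly on its spectrum. A point $(t_a)_{a\in Q}$ of $V(I)$ satisfies $t_b(t_a-t_{a^b})=0$ for all $a,b\in Q$; writing $S=\{a:t_a\ne0\}$ and $H=\langle S\rangle$, the relations force $S$ to be a union of $H$-conjugacy classes of $Q\cap H$ with $t$ constant on each such class and $t_a=0$ for $a\notin S$, and conversely every such configuration lies in $V(I)$. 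Hence $V(I)$ is the finite union of the linear subspaces $L_{H,S}$ indexed by $Q$-generated subgroups $H$ and unions $S$ of $H$-conjugacy classes with $\langle S\rangle=H$, and $\dim L_{H,S}$ equals the number of $H$-conjugacy classes contained in $S$. Maximising over $(H,S)$, by taking $S=Q\cap H$, yields exactly $k(G,Q)$, using that the maximum in Definition \ref{defn:kGQ} is attained on a $Q$-generated subgroup (replacing $H$ by $\langle Q\cap H\rangle$ only refines conjugacy classes and leaves $Q\cap H$ unchanged). Thus $\dim(\Q[x_a]/I)=k(G,Q)$, and a fortiori $\dim(B\otimes\Q)\le k(G,Q)$.

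For the lower bound I would exploit the complete presentation $A=\Z\langle[q_1],\dots,[q_m]\mid[q_i][q_j]=[q_j][q_i^{q_j}]\rangle$. Fixing a $Q$-generated subgroup $H$ attaining $k(G,Q)$, with $Q\cap H=Q_1\sqcup\dots\sqcup Q_{k(G,Q)}$, I would define a ring homomorphism $\rho\colon A\otimes\Q\to\Q[y_1,\dots,y_{k(G,Q)}]$ by $[a]\mapsto y_i$ when $a\in Q_i$ and $[a]\mapsto0$ when $a\notin Q\cap H$. Since conjugation by an element of $H$ preserves both $Q\cap H$ and each class $Q_i$, one checks directly that the defining relations are respected, so $\rho$ is well defined and visibly surjective. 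Restricting $\rho$ to $B$ gives image the polynomial ring $\Q[y_1^\ell,\dots,y_{k(G,Q)}^\ell]$, of Krull dimension $k(G,Q)$, whence $\dim(B\otimes\Q)\ge k(G,Q)$. Combining the two bounds gives $\dim(B\otimes\Q)=k(G,Q)$ and hence the first statement with $\deg p_B=k(G,Q)-1$.

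For the second statement I would pass to $A\otimes\Q$, a finitely generated $(B\otimes\Q)$-module by Lemma \ref{lem:Anoetherian}. Because $B$ is concentrated in weights divisible by $\ell$, its action preserves weight modulo $\ell$, so $A\otimes\Q=\bigoplus_{j=0}^{\ell-1}(A\otimes\Q)^{(j)}$ splits as a finite direct sum of finitely generated graded $(B\otimes\Q)$-modules, where $(A\otimes\Q)^{(j)}$ collects the weights congruent to $j$. Applying the Hilbert polynomial theorem for finitely generated graded modules to each summand, and using $\dim_\Z A_n=\dim_\Q(A\otimes\Q)_n$, the quantity $\dim_\Z A_{\ell m+j}$ agrees for $m\gg0$ with a polynomial in $m$ of degree at most $\dim(B\otimes\Q)-1=k(G,Q)-1$; reindexing by $n=\ell m+j$ assembles these into a quasi-polynomial $p_A$ of period dividing $\ell$ and degree at most $k(G,Q)-1$. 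Finally, the inclusion $B\subseteq A$ exhibits $B\otimes\Q$ as a submodule of the residue-$0$ summand, so that summand has full support $\Spec(B\otimes\Q)$ and its Hilbert polynomial has degree exactly $k(G,Q)-1$; hence $\deg p_A=k(G,Q)-1$, completing the proof.
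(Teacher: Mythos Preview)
Your proof is correct and takes a genuinely different route from the paper's for pinning down the degree $k(G,Q)-1$. The paper argues combinatorially: it bounds the cardinality $|\pi_0^\ell(\Hur(Q))_{n\ell}|$ directly, obtaining the lower bound by exhibiting $\binom{n-r+k-1}{k-1}$ distinct elements (distinguished via the conjugacy-class-partition invariant of Subsection~\ref{subsec:invcomponents}) and the upper bound by putting every monoid element into a normal form using Lemma~\ref{lem:relationsB}. You instead compute the Krull dimension $\dim(B\otimes\Q)=k(G,Q)$ and invoke the equality $\deg p_B=\dim(B\otimes\Q)-1$: your upper bound passes to the intermediate ring $\Q[x_a]/I$ and determines $V(I)$ as a finite union of linear subspaces indexed by pairs $(H,S)$, while your lower bound is a clean surjection $A\otimes\Q\twoheadrightarrow\Q[y_1,\dots,y_k]$ verified directly on the presentation of $A$. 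What your approach buys is conceptual clarity---the degree is visibly a Krull dimension, and the stratification of $V(I)$ makes the role of $Q$-generated subgroups transparent---whereas the paper's approach yields explicit numerical bounds (e.g.\ $p_B^+(t)=2^m t^{k-1}$) that are reused almost verbatim in the proof of Proposition~\ref{prop:Bomegagrowth}. The treatments of the quasi-polynomial for $A$ are essentially the same in both arguments.
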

\begin{proof}
In the proof we set $k=k(G,Q)$.
Fix a field $\F$ and let $B_\F=B\otimes \F$; then $\dim_\Z B_{\ell n}=\dim_\F B_{\F,\ell n}$, since $B$ is a free $\Z$-module. The $\F$-algebra $B_\F$ is a quotient of the weighted polynomial ring $\F[x_1,\dots,x_m]$, with variables $x_i$ put in weight $\ell$: this is witnessed by the surjective map of $\F$-algebras $\F[x_1,\dots,x_m]\twoheadrightarrow B_\F$ sending $x_i\mapsto[q_i]^\ell$. By the classical theory of the Hilbert function of a finitely generated $\F$-algebra, there is a polynomial $p_B(t)\in\Q[t]$ of degree at most $m-1$ such that, for $n$ large enough, $\dim_\F B_{\F,\ell n}=p_B(n)$.

We now want to argue that the degree of $p_B(t)$ is precisely $k-1$. For this, we will show that there exist polynomials $p^-_B(t)\in\Q[t]$ and $p^+_B(t)\in\Q[t]$ of degree $k-1$ such that, for $n$ large enough, $p^-_B(n)\le |\pi_0^\ell(\Hur(Q))_{n\ell}|\le p^+_B(n)$.
\begin{itemize}
 \item {\bf Lower bound.} Let $H\subseteq G$ be a subgroup such that $H\cap Q$ is the union of precisely $k$ conjugacy classes of $H$. Let $H'$ be the subgroup of $H$ generated by $H\cap Q$; then $Q\cap H'=Q\cap H$ also splits as a union of at least, hence precisely $k$ conjugacy classes, so we may assume to have chosen $H=H'$ at the beginning. Let $Q\cap H=\set{a_1,\dots,a_r}$ be the list of all elements in $Q\cap H$, and let $J=\set{b_1,\dots,b_k}\subset Q\cap H$ be a set representatives of the $k$ conjugacy classes in $Q\cap H\subseteq H$. For $n\ge r$ and for every splitting $n=r+n_1+\dots+n_k$, with $n_1,\dots,n_k\ge0$, we can form an element
 \[
  \hat a_1^\ell\cdot\dots\cdot\hat a_r^\ell\cdot \hat b_1^{n_1\ell}\cdot\dots\cdot\hat b_k^{n_k\ell}\in\pi_0^\ell(\Hur(Q))_{n\ell};
 \]
these elements are all distinct, for different choices of $n_1,\dots,n_k$, as witnessed by the \emph{conjugacy class partition} invariant from Subsection \ref{subsec:invcomponents}. Since there are $\binom{n-r+k-1}{k-1}$ choices for the splitting $n=r+n_1+\dots+n_k$, we can define $p_B^-(t)=\binom{t-r+k-1}{k-1}$ and obtain $|\pi_0^\ell(\Hur(Q))_{n\ell}|\ge p_B^-(n)$ for $n\ge r$.
\item {\bf Upper bound.} Let $\hat a_1^\ell\cdot\dots\cdot\hat a_n^\ell$ be an element in $\pi_0^\ell(\Hur(Q))_{n\ell}$; we want to construct a ``normal form'' for this element. First, up to reordering the factors $\hat a_i^\ell$, we may assume that there is $1\le r\le n$ such that the elements $a_1,\dots,a_r\in Q$ are all distinct, and such that for all $r+1\le j\le n$ there is $1\le i\le r$ with $a_j=a_i$.
It follows that the subgroup $H=\left<a_1,\dots,a_n\right>\subseteq G$, i.e. the \emph{image subgroup} invariant of the chosen element in $\pi_0^\ell(\Hur(Q))_{n\ell}$, can also be described as $\left<a_1,\dots,a_r\right>\subseteq G$. Let now $Q_1,\dots, Q_s$ be the conjugacy classes in $H$ in which $Q\cap H$ splits, and fix representatives $b_1\in Q_1,\dots,b_s\in Q_s$. Using repeatedly the relations from Lemma \ref{lem:relationsB} with $q_j$ chosen among $a_1,\dots,a_r$ and $q_i$ chosen among $a_{r+1},\dots,a_n$, we can achieve the situation in which each of $a_{r+1},\dots,a_n$ is one of $b_1,\dots,b_s$. Thus we have proved that each element in $\pi_0^\ell(\Hur(Q))_{n\ell}$ can be written as $\hat a_1^\ell\cdot\dots\cdot\hat a_r^\ell\cdot \hat b_1^{\ell n_1}\cdot\dots\cdot\hat b_s^{\ell n_s}$, for some $r\le m$, $a_1,\dots,a_r$ distinct elements of $Q$, and $n_1+\dots+n_s=n-r$. Making a very rough estimate, there are $2^m$ subsets in $Q$, among which one can choose $\set{a_1,\dots,a_r}$, and for each choice there are at most $n^{k-1}$ ways to choose the numbers $n_1,\dots,n_s$, since $s\le k$ and each of $n_1,\dots,n_{s-1}$ is $\le n$, and the last number $n_s$ is forced by the sum condition. Setting $p_B^+(t)=2^mt^{k-1}$, we have $|\pi_0^\ell(\Hur(Q))_{n\ell}|\le p_B^+(n)$ for all $n\ge0$.
\end{itemize}
This concludes the proof of the existence of $p_B(t)$ of degree $k-1$.
To prove that there is similarly a quasi-polynomial $p_A(t)$ such that $p_A|_n(n)=\dim_\Z(A_n)$, we use that $A$ is finitely generated as a $B$-module and invoke again the classical theory of the Hilbert function of a finitely generated graded
module over a graded algebra of finite type: this in particular ensures that the degree of $p_A(t)$ is at most $k-1$, and that the period divides $\ell$ (as $B$ is concentrated in degrees multiple of $\ell$). Finally, since $B\subseteq A$, we have that $\dim_\Z(A_{n\ell})\ge\dim_\Z(B_{n\ell})=p_B(n)$ grows in $n$ at least as a polynomial of degree $k-1$: this forces the degree of $p_A(t)$ to be $k-1$.
\end{proof}
Similarly as in Proposition \ref{prop:Agrowth}, one can show that the dimension of $(A_\one)_n$ grows like a quasi-polynomial of degree precisely $k(Q,G)-1$. This result has been proved independently by S\'eguin \cite{Seguin}, who has studied extensively the $\F$-algebra $A_\one\otimes\F$, for $\F$ a field of characteristic coprime with $|G|$.

We observe that Proposition \ref{prop:Agrowth} establishes Corollary \ref{cor:main1} for $i=0$; the general case will use Theorem \ref{thm:main1}.
\begin{proof}[Proof of Corollary \ref{cor:main1}]
 Let $i\ge0$ and fix a field $\F$; by Theorem \ref{thm:main1} the left $A\otimes\F$-module $H_i(\Hur(Q);\F)$ is finitely generated; recall also Definition \ref{defn:B} and Lemma \ref{lem:Anoetherian}, and observe that, since $A\otimes\F$ is finitely generated as a $B\otimes\F$-module, we have that $H_i(\Hur(Q);\F)$ is also finitely generated as a $B\otimes\F$-module. We now appeal again to the  
 classical theory of the Hilbert function of a finitely generated graded module over a graded algebra of finite type, together with the fact that $B\otimes \F$ is concentrated in weights multiple of $\ell$ and has weighted dimension given by a polynomial of degree $k(G,Q)$.
\end{proof}

\subsection{Proof of Corollary \ref{cor:main2}}
We fix an element $\omega\in G$ and $i\ge0$ throughout the subsection.
\begin{defn}\label{defn:Bomega}
 We denote by $B_\omega$ the quotient of $B$ by the ideal generated by the elements $[q_i]^\ell-[q_i^\omega]^\ell$.
\end{defn}
We observe that $B_\omega$ is the monoid ring of the quotient of the abelian monoid $\pi_0^\ell(\Hur(Q))$ by the relations $\hat q_i^\ell=\widehat{q_i^{\omega}}^\ell$; in particular $B_\omega$ is again a weighted ring and it is free as a $\Z$-module. Similarly, $B_\omega\otimes R$ is free as an $R$-module, for any commutative ring $R$.

\begin{lem}\label{lem:lstarstag}
 Let $g\in G$ and let $a\in Q$; then the maps $\lst(a)\colon\Hur(Q)_g\to\Hur(Q)_{ag}$ and $\rst(a^g)\colon\Hur(Q)_g\to\Hur(Q)_{ag}$ are homotopic.
\end{lem}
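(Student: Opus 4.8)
The plan is to reproduce the mechanism of the proof of Lemma \ref{lem:Gtwist}: I will exhibit a single braid whose action conjugates $\lst(a)$ into $\rst(a^g)$, and then invoke the standard fact (recalled in the proof of Proposition \ref{prop:descriptiondi}) that the self-map of a homotopy quotient induced by the action of one group element, together with conjugation by that element, is homotopic to the identity.

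First I would fix the total monodromy. Writing $Q^n_g\subseteq Q^n$ for the braid-invariant subset of tuples with product $g$, so that $\Hur_n(Q)_g=Q^n_g\qq\Br_n$, I observe that prepending $a$ sends $Q^n_g$ into $Q^{n+1}_{ag}$, while appending $a^g$ also sends $Q^n_g$ into $Q^{n+1}_{ag}$, since $g\cdot a^g=ag$. This records why the two maps land in the same component and why $a^g$ (rather than $a$) is the correct element to append on the $g$-monodromy part.

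Next I would introduce the braid $\fb'=\sigma_n\sigma_{n-1}\cdots\sigma_1\in\Br_{n+1}$, which drags the first strand past all the others into last position. A direct computation with the quandle action yields
\[
 \fb'\cdot(a,a_1,\dots,a_n)=(a_1,\dots,a_n,a^g),\qquad g=a_1\cdots a_n,
\]
so that the square of sets
\[
 \begin{tikzcd}
  Q^n_g\ar[r,"a\times -"]\ar[d,equal] & Q^{n+1}_{ag}\ar[d,"\fb'\cdot -"]\\
  Q^n_g\ar[r,"-\times a^g"'] & Q^{n+1}_{ag}
 \end{tikzcd}
\]
commutes on the nose. To pass to homotopy quotients I would verify the matching equivariances: the top map is equivariant along $\one_1\times-\colon\Br_n\hookrightarrow\Br_{n+1}$, the bottom along $-\times\one_1$, and the right map along conjugation $\beta\mapsto\fb'\beta\fb'^{-1}$. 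The key algebraic input is the relation $\fb'\sigma_{i+1}\fb'^{-1}=\sigma_i$ for $1\le i\le n-1$, which shows that conjugation by $\fb'$ carries $\one_1\times\Br_n=\langle\sigma_2,\dots,\sigma_n\rangle$ isomorphically onto $\Br_n\times\one_1=\langle\sigma_1,\dots,\sigma_{n-1}\rangle$, so that the accompanying square of group homomorphisms commutes.

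Taking homotopy quotients then produces a strictly commutative square of spaces with top arrow $\lst(a)$, bottom arrow $\rst(a^g)$, and right arrow the self-map of $\Hur_{n+1}(Q)_{ag}$ induced by $\fb'\cdot-$ together with conjugation by $\fb'$. By the standard fact this right arrow is homotopic to the identity, whence $\rst(a^g)\simeq\lst(a)$, as claimed. The only genuine computations are the quandle identity displayed above and the index-shift relation $\fb'\sigma_{i+1}\fb'^{-1}=\sigma_i$; I expect the latter (most cleanly checked on generators using the braid relations) to be the main, if routine, obstacle to a fully rigorous write-up.
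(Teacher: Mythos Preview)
Your proposal is correct and follows essentially the same argument as the paper: the paper uses precisely the braid $\tilde\fb_{n+1}=\sigma_n\cdots\sigma_1$, the same commuting diagram of sets and groups (drawn as a triangle rather than a square with identity on the left), and the same appeal to the fact that the induced self-map of the homotopy quotient is homotopic to the identity. Your write-up is in fact slightly more detailed, in that you spell out the conjugation relation $\fb'\sigma_{i+1}\fb'^{-1}=\sigma_i$ explicitly.
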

\begin{proof}
 The argument is similar to the one in the proof of Lemma \ref{lem:Gtwist}. We fix $n\ge0$ and prove that $\lst(a)$ and $\rst(a^g)$ are homotopic as maps $\Hur_{n}(Q)_g\to\Hur_{n+1}(Q)_{ag}$.
 Let $\tilde\fb_{n+1}\in\Br_{n+1}$ be the product of standard generators $\sigma_n\dots\sigma_1$. Denote by $Q^n_g$ the subset of $Q^n$ of sequences $(a_1,\dots,a_n)$ with total monodromy $a_1\dots a_n=g$, and define similarly $Q^{n+1}_{ag}$; then we have commutative triangles of sets and groups
 \[
  \begin{tikzcd}
     Q^n_g\ar[r,"a\times -"]\ar[dr,"-\times a^g"']  & Q^{n+1}_{ag}\ar[d,"\tilde\fb_{n+1}\cdot-"]& &   \Br_n \ar[dr,"-\times\one_1"']\ar[r,"\one_1\times-"] &\Br_{n+1}\ar[d,"(-)^{\tilde\fb_{n+1}^{-1}}"]\\
    & Q^{n+1}_{ag},& & &\Br_{n+1},
  \end{tikzcd}
 \]
such that each map of sets is equivariant with respect to the corresponding map of groups. Taking homotopy quotients, we obtain a commutative triangle of spaces
\[
 \begin{tikzcd}
  \Hur_n(Q)_g\ar[r,"\lst(a)"]\ar[dr,"\rst(a^g)"'] &\Hur_{n+1}(Q)_{ag}\ar[d,"\tilde\fb_{n+1}\cdot-"]\\
  &\Hur_{n+1}(Q)_{ag},
 \end{tikzcd}
\]
and we conclude by noticing that the right vertical map in the last diagram is homotopic to the identity.
\end{proof}

In the following proposition it is helpful to notice that if $M$ is an $A\otimes R$-bimodule with a $G$-twist (see Definition \ref{defn:Gtwist}), then the left and the right actions of $B\otimes R$ on $M$ coincide.

\begin{prop}\label{prop:HiBomegamodule}
Let $R$ be a commutative ring; then the action of $B\otimes R$ on $H_i(\Hur(Q)_\omega;R)$ factors through $B_\omega\otimes R$.
\end{prop}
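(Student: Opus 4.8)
The plan is to prove that the generator $[q_i]^\ell-[q_i^\omega]^\ell$ of the defining ideal of $B_\omega$ (Definition \ref{defn:Bomega}) acts as zero on $H_i(\Hur(Q)_\omega;R)$ for every $i$ and every index; since $B_\omega\otimes R$ is by construction the quotient of $B\otimes R$ by the ideal generated by these elements, this is exactly the assertion that the $B\otimes R$-action factors through $B_\omega\otimes R$. As the action in question is the left action (coming from the $A_\one$-module structure, via $B\subseteq A_\one$), it suffices to show that left multiplication by $[q_i]^\ell$ and left multiplication by $[q_i^\omega]^\ell$ induce the same endomorphism of $H_i(\Hur(Q)_\omega;R)$.

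First I would translate both sides into stabilisation maps. Left multiplication by $[q_i]$ is induced by $\lst(q_i)$ and that by $[q_i^\omega]$ by $\lst(q_i^\omega)$ (Definition \ref{defn:stabmap}), so the two endomorphisms to compare are $\lst(q_i)^\ell_*$ and $\lst(q_i^\omega)^\ell_*$. Both genuinely preserve the $\omega$-component, hence are endomorphisms of $H_i(\Hur(Q)_\omega;R)$, because $q_i^\ell=\one$ and likewise $(q_i^\omega)^\ell=(q_i^\ell)^\omega=\one$. The strategy is to prove the geometric comparison $\lst(q_i)^\ell\simeq\rst(q_i^\omega)^\ell$ on $\Hur(Q)_\omega$ and then to invoke the $G$-twist on $H_i(\Hur(Q);R)$ (Lemma \ref{lem:Gtwist}), which by the remark preceding the statement converts the right action of $[q_i^\omega]^\ell$ into its left action, giving $\rst(q_i^\omega)^\ell_*=\lst(q_i^\omega)^\ell_*$.

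The heart of the argument is an $\ell$-step comparison built from Lemma \ref{lem:lstarstag}. I would factor $\lst(q_i)^\ell$ as the composite
\[
 \Hur(Q)_\omega\xrightarrow{\lst(q_i)}\Hur(Q)_{q_i\omega}\xrightarrow{\lst(q_i)}\cdots\xrightarrow{\lst(q_i)}\Hur(Q)_{q_i^\ell\omega}=\Hur(Q)_\omega,
\]
whose $k$-th factor is $\lst(q_i)\colon\Hur(Q)_{q_i^k\omega}\to\Hur(Q)_{q_i^{k+1}\omega}$. Applying Lemma \ref{lem:lstarstag} with $g=q_i^k\omega$ and $a=q_i$ rewrites this factor, up to homotopy, as $\rst(q_i^{\,q_i^k\omega})$, and the key computation is that the conjugating element simplifies uniformly: $q_i^{\,q_i^k\omega}=\omega^{-1}q_i^{-k}q_iq_i^k\omega=q_i^\omega$, because $q_i$ commutes with $q_i^k$. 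Thus every factor is homotopic to $\rst(q_i^\omega)$, with matching source and target components since $q_i^k\omega\cdot q_i^\omega=q_i^{k+1}\omega$; composing the $\ell$ homotopies yields $\lst(q_i)^\ell\simeq\rst(q_i^\omega)^\ell$, and passing to $H_i(-;R)$ and combining with the $G$-twist identity of the previous paragraph finishes the proof. Tensoring with $R$ throughout is harmless, since $B$ and $B_\omega$ are free over $\Z$.

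The only step I expect to require genuine care is this component bookkeeping: one must track in which total-monodromy component each intermediate map lives and verify that the conjugation $q_i^{\,q_i^k\omega}$ collapses to $q_i^\omega$ independently of $k$ — this uniformity is exactly what makes the $\ell$ individual homotopies composable into a single homotopy between the two $\ell$-fold stabilisations. Once that is in place, the remaining ingredients (Lemma \ref{lem:lstarstag}, the $G$-twist, and the definition of $B_\omega$) combine formally.
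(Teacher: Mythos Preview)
Your proof is correct and follows essentially the same route as the paper: the key step in both is the iterated application of Lemma \ref{lem:lstarstag} to show $\lst(a)^\ell\simeq\rst(a^\omega)^\ell$ on $\Hur(Q)_\omega$, using the same computation $a^{a^k\omega}=a^\omega$. The only cosmetic difference is that the paper phrases the goal as equality of right multiplications $-\cdot[a]^\ell=-\cdot[a^\omega]^\ell$ and explicitly redoes the argument that $\rst(a)^\ell\simeq\lst(a)^\ell$, whereas you work with left multiplications and invoke the remark preceding the proposition to identify left and right $B$-actions; both amount to the same use of the $G$-twist.
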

\begin{proof}
It suffices to prove that for all $a\in Q$, the maps
\[
-\cdot[a]^\ell,\  -\cdot[a^\omega]^\ell\colon H_i(\Hur(Q)_\omega;R)\to H_i(\Hur(Q)_\omega;R)
\]
coincide; the previous maps can be identified with the maps induced in homology by the maps of spaces
\[
\rst(a)^\ell,\ \rst(a^\omega)^\ell\colon \Hur(Q)_\omega\to\Hur(Q)_\omega,
\]
and hence it will suffice to prove that these two maps are homotopic. As shown in the proof of Lemma \ref{lem:Gtwist}, the map $\rst(a)\colon\Hur(Q)\to\Hur(Q)$ is homotopic to the composition $\lst(a)\circ(-)^a$; we also notice that $\lst(a)\circ(-)^a=(-)^a\circ\lst(a)$, as a consequence of the fact that $a^a=a$. Taking $\ell$-fold compositions, we obtain that $\rst(a)^\ell$ is homotopic to $((-)^a)^\ell\circ\lst(a)^\ell$; we then notice that $((-a)^a)^\ell$ coincides with $(-)^{a^\ell}$, which is the identity of $\Hur(Q)$. Thus we have shown that $\rst(a)^\ell$ is homotopic to $\lst(a)^\ell$ as a map $\Hur(Q)\to\Hur(Q)$, and in particular the restricted maps $\Hur(Q)_\omega\to \Hur(Q)_\omega$ are homotopic.

By Lemma \ref{lem:lstarstag} we moreover have that the map $\lst(a)^\ell\colon\Hur(Q)_\omega\to\Hur(Q)_\omega$ is homotopic to the composition $\rst(a^{a^{\ell-1}\omega})\circ\dots\circ\rst(a^{a\omega})\circ\rst(a^\omega)$; since $a^{a^i}=a$ for all $i\ge0$, we also have $a^{a^i\omega}=a^\omega$, and eventually we obtain that $\lst(a)^\ell$ is homotopic to $\rst(a^\omega)^\ell$.
\end{proof}

We can now analyse the growth of the weighted dimension of the algebra $B_\omega$, in a similar way as we did for $B$ in Proposition \ref{prop:Agrowth}.
\begin{prop}\label{prop:Bomegagrowth}
 There is a polynomial $p_{B_\omega}(t)\in\Q[t]$ of degree $\le k(G,Q,\omega)-1$ such that, for $n$ large enough, $\dim_\Z (B_\omega)_{\ell n}=p_{B_\omega}(n)$.
\end{prop}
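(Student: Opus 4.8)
The plan is to follow closely the upper-bound half of the proof of Proposition \ref{prop:Agrowth}, the only difference being that the relation defining $B_\omega$ forces us to enlarge the ambient subgroup so as to contain $\omega$, which is exactly what replaces $k(G,Q)$ by $k(G,Q,\omega)$. First I would record that $B_\omega$ is the monoid ring of the commutative monoid $\bar\pi$ obtained from $\pi_0^\ell(\Hur(Q))$ by imposing the extra relations $\hat q_i^\ell=\widehat{q_i^\omega}^\ell$, and that $B_\omega$ is free as a $\Z$-module with $(B_\omega)_{\ell n}$ free on the weight-$\ell n$ elements of $\bar\pi$; hence $\dim_\Z(B_\omega)_{\ell n}=|\bar\pi_{\ell n}|$. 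Eventual polynomiality of $n\mapsto\dim_\Z(B_\omega)_{\ell n}$ is then obtained exactly as for $B$: after tensoring with a field $\F$, the algebra $B_\omega\otimes\F$ is a quotient of the weighted polynomial ring $\F[x_1,\dots,x_m]$ (variables in weight $\ell$), so by the theory of Hilbert functions $\dim_\F(B_\omega\otimes\F)_{\ell n}$ agrees with a polynomial in $n$ for $n$ large, and freeness over $\Z$ identifies this with $\dim_\Z(B_\omega)_{\ell n}$.

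It then remains to bound the degree of this polynomial by $k(G,Q,\omega)-1$, for which it suffices to exhibit a polynomial upper bound on $|\bar\pi_{\ell n}|$ of degree $k(G,Q,\omega)-1$, since a nonnegative eventually-polynomial function dominated by a polynomial of degree $D$ has degree at most $D$. To produce such a bound I would put elements of $\bar\pi_{\ell n}$ in a normal form. Starting from $\hat a_1^\ell\cdots\hat a_n^\ell$, commutativity lets me reorder so that $a_1,\dots,a_r$ are distinct and every $a_j$ with $j>r$ equals some $a_i$ with $i\le r$; the relevant subgroup is now $H'=\left<\omega,a_1,\dots,a_r\right>=\left<\omega,a_1,\dots,a_n\right>$, which contains $\omega$ by construction. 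Letting $Q_1,\dots,Q_s$ be the conjugacy classes in $H'$ into which $Q\cap H'$ splits, with representatives $b_1,\dots,b_s$, I would rewrite each factor $\hat a_j^\ell$ with $j>r$ into the representative of its class, reaching $\hat a_1^\ell\cdots\hat a_r^\ell\cdot\hat b_1^{\ell n_1}\cdots\hat b_s^{\ell n_s}$ with $n_1+\dots+n_s=n-r$. Since $H'\ni\omega$, Definition \ref{defn:kGQomega} gives $s\le k(G,Q,\omega)$; as $r\le m$ and there are at most $2^m$ choices for the set $\set{a_1,\dots,a_r}$, while the number of admissible tuples $(n_1,\dots,n_s)$ equals $\binom{n-r+s-1}{s-1}$, a polynomial in $n$ of degree $s-1\le k(G,Q,\omega)-1$, this bounds $|\bar\pi_{\ell n}|$ by $2^m$ times a polynomial of degree $k(G,Q,\omega)-1$, as desired. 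Unlike in Proposition \ref{prop:Agrowth}, no matching lower bound is needed, since the statement only claims an upper bound on the degree.

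The step requiring the most care, and the one genuinely new compared with Proposition \ref{prop:Agrowth}, is justifying the two assertions about $H'$. First, I must check that $H'=\left<\omega,a_1,\dots,a_n\right>$ is an invariant of $\bar\pi$: under the relations of Lemma \ref{lem:relationsB} one replaces some $a_i$ by $a_i^{a_j}$ with $a_j$ still present, and under the defining relation of $B_\omega$ one replaces $a_i$ by $a_i^\omega$; in both cases the conjugating element lies in $H'$, so $H'$ is unchanged. Second, I must verify that the available relations really let me conjugate an individual factor by an arbitrary element of $H'$: the defining relation of $B_\omega$ conjugates any factor directly by $\omega^{\pm1}$ (applying it to suitable elements of $Q$, using that $Q$ is conjugation-invariant), while, in the presence of the factors $\hat a_1^\ell,\dots,\hat a_r^\ell$, iterating the relation of Lemma \ref{lem:relationsB} conjugates any factor by any power of each $a_i$, hence by $a_i^{\pm1}$; composing these conjugations realises conjugation by every element of $H'=\left<\omega,a_1,\dots,a_r\right>$. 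This is precisely the point at which $\omega$ is incorporated into the ambient subgroup, and thus where $k(G,Q,\omega)$ rather than $k(G,Q)$ controls the count.
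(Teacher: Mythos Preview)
Your proof is correct and follows essentially the same approach as the paper's: eventual polynomiality comes from Hilbert function theory applied to the finitely generated $\F$-algebra $B_\omega\otimes\F$, and the degree bound comes from putting elements into a normal form $\hat a_1^\ell\cdots\hat a_r^\ell\cdot\hat b_1^{\ell n_1}\cdots\hat b_s^{\ell n_s}$ where $s\le k(G,Q,\omega)$ because the ambient subgroup contains $\omega$. The only cosmetic difference is that the paper first invokes the $B$-normal form (with subgroup $H=\langle a_1,\dots,a_r\rangle$) and then enlarges to $L=\langle H,\omega\rangle$, whereas you work with $H'=\langle\omega,a_1,\dots,a_r\rangle$ from the outset; your last paragraph, justifying that the relations of $B$ together with the defining relation of $B_\omega$ allow conjugation by arbitrary elements of $H'$, is exactly the content of the paper's passage from $b_i$ to $c_{\iota(i)}$.
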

\begin{proof}
 The existence of a polynomial $p_{B_\omega}(t)$, possibly of degree $\ge k(G,Q,\omega)$, but satisfying $\dim_\Z (B_\omega)_{\ell n}=p_{B_\omega}(n)$ for $n$ large enough, is guaranteed by the classical theory of the Hilbert function of a finitely generated $\F$-algebra, after tensoring $B_\omega$ by some field $\F$; in fact, since $B_\omega$ is a quotient of $B$, we immediately obtain that the degree of $p_{B_\omega}(t)$ is at most the degree of $p_B(t)$, which is $k(G,Q)$.
 
 We now want to improve the upper bound on the degree of $p_{B_\omega}(t)$ to $K(G,Q,\omega)$. Recall from the proof of the upper bound in Proposition \ref{prop:Agrowth} that for $n\ge m=|Q|$, the abelian group $B_{\ell n}$ is generated by the products $[a_1]^\ell\dots[a_r]^\ell[b_1]^{\ell n_1}\dots[b_s]^{\ell n_s}$, for varying choices of:
 \begin{itemize}
  \item an integers $0\le r\le m=|Q|$;
  \item elements $a_1,\dots,a_r\in Q$;
  \item a partition $n-r=n_1+\dots+n_s$,
 \end{itemize}
 where we set $H=\left<a_1,\dots,a_r\right>\subseteq G$ (depending on the choice of $r$ and $a_1,\dots,a_r$), we set $s\le k(G,Q)$ to be the number of conjugacy classes of $Q\cap H$ in $H$, and we let $b_1,\dots,b_s$ be a system of representatives of these conjugacy classes (depending on $H$; we can choose a priori such a system of representatives for any subgroup $H\subseteq G$).

Fix now $0\le r\le m$ and elements $a_1,\dots,a_r\in Q$, and let $H$, $s$ and $b_1,\dots,b_s$ be as above. Let also $L=\left<H,\omega\right>\subseteq G$ and let $P_1,\dots,P_u$ be the conjugacy classes in which the conjugation invariant subset $Q\cap L$ of $L$ splits, for some $0\le u\le k(G,Q,\omega)$; finally, let $c_1,\dots,c_u\in Q$ be representatives. Each element $b_i$ belongs to $L\cap Q$, and inside $L$ is conjugate to some element $c_{\iota(i)}$, for a suitable function $\iota\colon\set{1,\dots,s}\to\set{1,\dots,u}$; using the relations of $B$ together with the additional relations of $B_\omega$ from Definition \ref{defn:Bomega}, we have in $B_\omega$ the equality
\[
[a_1]^\ell\dots[a_r]^\ell[b_1]^{\ell n_1}\dots[b_s]^{\ell n_s}=
[a_1]^\ell\dots[a_r]^\ell[c_{\iota(1)}]^{\ell n_1}\dots[c_{\iota(s)}]^{\ell n_s}.
\]
It follows that $(B_\omega)_{\ell n}$ is generated by all products $[a_1]^\ell\dots[a_r]^\ell[c_1]^{\ell \nu_1}\dots[c_u]^{\ell \nu_u}$, for varying $r\ge0$, $a_1,\dots,a_r\in Q$, and partitions $n-r=\nu_1+\dots+\nu_u$, where $0\le u\le k(G,Q,\omega)$ and $c_1,\dots,c_n\in Q$ depend on $r\ge0$ and $a_1,\dots,a_r\in Q$. Again by a very rough estimate, there are at most $2^mn^{k(G,Q,\omega)-1}$ such products, and this proves that $\pi_{B_\omega}(n)\le 2^mn^{k(G,Q,\omega)-1}$ for $n$ large enough, in particular the degree of $\pi_{B_\omega}(t)$ is at most $k(G,Q,\omega)-1$.
\end{proof}
Since $B_\omega$ is free as an abelian group, for any field $\F$ we have  $\dim_\F (B_\omega\otimes \F)_{\ell n}= \dim_\Z (B_\omega)_{\ell n}$.
\begin{proof}[Proof of Corollary \ref{cor:main2}]
This is now analogous to the proof of Corollary \ref{cor:main1}. For a field $\F$ and $i\ge0$, by Theorem \ref{thm:main2} we have that $H_i(\Hur(Q);\F)$ is a finitely generated $A_\one\otimes\F$-module, and by Lemma \ref{lem:Anoetherian} we have that $A_\one\otimes\F$ is finitely generated over $B\otimes \F$; it follows that $H_i(\Hur(Q);\F)$ is a finitely generated $B\otimes\F$-module, and since by Proposition \ref{prop:HiBomegamodule} the action of $B\otimes\F$ on $H_i(\Hur(Q);\F)$ factors through the projection of rings $B\otimes\F\twoheadrightarrow B_\omega\otimes\F$, we have that $H_i(\Hur(Q);\F)$ is a finitely generated $B_\omega\otimes\F$-module; the statement is now a consequence of Proposition \ref{prop:Bomegagrowth} and the classical theory of the Hilbert function of a finitely generated graded
module over a graded algebra of finite type, together with the observation that $B_\omega\otimes\F$ is concentrated in weights multiple of $\ell$.
\end{proof}

\section{Large elements as total monodromy}
In this section we prove Theorem \ref{thm:main3} and Corollary \ref{cor:main3}. We fix a commutative ring $R$ throughout the section, as well as a homological degree $i\ge0$. We assume that $Q\subset G$ is a single conjugacy class and fix a large element $\omega\in G$ as in Definition \ref{defn:omegalarge}.

\subsection{The algebra \texorpdfstring{$C$}{C}}
In this subsection we prove Theorem \ref{thm:main3} assuming that $R$ is Noetherian. We fix a Noetherian ring $R$ throughout the subsection.
\begin{defn}
 We denote by $C$ the quotient of $B$ by the relations $[q_i]^\ell=[q_j]^\ell$ for all $1\le i,j\le m$.
\end{defn}
Notice that $C$ is also a quotient of $B_\omega$; as a ring, it is isomorphic to a weighted polynomial ring in a variable of weight $\ell$; for instance, one can take the image of $[a]^\ell$ in the quotient to be such a variable, for any $a\in Q$.

Let $\bar n\ge\ell $ be such that the finitely generated $B\otimes R$-module $H_i(\Hur(Q)_\omega;R)$ is generated by the direct sum $\bigoplus_{n=0}^{\bar n-\ell}H_i(\Hur_n(Q)_\omega;R)$: the existence of such $\bar n$ is guaranteed by the assumption that $R$ is Noetherian and Theorem \ref{thm:main2}. Then we can consider the direct sum
\[
H_i(\Hur_{\ge\bar n}(Q)_\omega;R):=\bigoplus_{n\ge\bar n} H_i(\Hur_n(Q)_\omega;R)
\]
as a sub-$B\otimes R$-module of $H_i(\Hur(Q)_\omega;R)$.
\begin{lem}\label{lem:HiCmodule}
 The action of $B\otimes R$ on $H_i(\Hur_{\ge\bar n}(Q)_\omega;R)$ factors through $C\otimes R$.
\end{lem}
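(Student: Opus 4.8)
The plan is to exploit Proposition \ref{prop:HiBomegamodule}, which already tells us that the $B\otimes R$-action on $N:=H_i(\Hur(Q)_\omega;R)$ factors through $B_\omega\otimes R$; so $N$ is a $B_\omega\otimes R$-module, and it suffices to prove that the extra relations cutting out $C$ act as zero on the high-weight part $H_i(\Hur_{\ge\bar n}(Q)_\omega;R)$. Writing $J'=\ker(B_\omega\to C)$ for the ideal generated by the differences $[q_i]^\ell-[q_j]^\ell$, the goal becomes $J'\cdot N_n=0$ for every $n\ge\bar n$.

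The heart of the argument is a collapsing statement inside the ring $B_\omega$ itself: I claim that for $L\ge 2$ all monomials $[a_1]^\ell\cdots[a_L]^\ell$ with $a_1,\dots,a_L\in Q$ are equal in $B_\omega$. To prove it I use the relation $[a]^\ell[b]^\ell=[a^b]^\ell[b]^\ell$ from Lemma \ref{lem:relationsB}, valid in the commutative ring $B$ and hence in its quotient $B_\omega$, together with the relation $[a]^\ell=[a^\omega]^\ell$ defining $B_\omega$. Since $L\ge 2$, any chosen factor $[a_i]^\ell$ sits in a monomial containing at least one further factor $[a_{j_0}]^\ell$; commuting $[a_{j_0}]^\ell$ next to $[a_i]^\ell$ and applying the first relation replaces $a_i$ by $a_i^{a_{j_0}}$ while leaving all the remaining factors (in particular $[a_{j_0}]^\ell$) untouched, so this move may be iterated. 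Combined with the free conjugation $a_i\mapsto a_i^\omega$ coming from $B_\omega$, I can thus conjugate $a_i$ by any element of the subgroup $\langle\omega,a_{j_0}\rangle$, which equals $G$ precisely because $\omega$ is \emph{large} with respect to $Q$ (Definition \ref{defn:omegalarge}). As $Q$ is a single conjugacy class, $a_i^G=Q$, so the factor $[a_i]^\ell$ can be turned into $[c]^\ell$ for any $c\in Q$; changing the factors one at a time yields the claim. Granting this, the surjection $B_\omega\to C$ is an isomorphism in every weight $\ge 2\ell$: in weights that are multiples of $\ell$ both sides are free of rank one (the source because all length-$L$ monomials coincide, the target because $C$ is a polynomial ring on one weight-$\ell$ generator), and in all other weights both vanish. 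Hence $J'_m=0$ for all $m\ge 2\ell$.

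With this in hand the lemma follows quickly from the defining property of $\bar n$. By Theorem \ref{thm:main2} and Lemma \ref{lem:Anoetherian} the module $N$ is finitely generated over $B_\omega\otimes R$, generated in weights $\le\bar n-\ell$; so for $n\ge\bar n$ any $x\in N_n$ can be written as $x=\sum_k b_k m_k$ with $m_k$ of weight $\le\bar n-\ell$ and $b_k\in B_\omega\otimes R$ of weight $n-\mathrm{wt}(m_k)\ge\ell$. For $u=[q_i]^\ell-[q_j]^\ell\in J'$ we then have $u\cdot x=\sum_k (u\,b_k)\,m_k$, where each product $u\,b_k$ lies in the ideal $J'$ and has weight $\ge 2\ell$, hence vanishes by the previous paragraph. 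Therefore $u\cdot x=0$, which is exactly the statement that the $B_\omega\otimes R$-action on $H_i(\Hur_{\ge\bar n}(Q)_\omega;R)$ factors through $C\otimes R$.

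I expect the main obstacle to be the collapsing claim in $B_\omega$, and in particular the bookkeeping ensuring that the conjugation moves never consume the auxiliary factor $[a_{j_0}]^\ell$, so that they may be repeated often enough to realise the full subgroup $\langle\omega,a_{j_0}\rangle=G$. This is the only place where largeness and the single-conjugacy-class hypothesis enter, and it is exactly the step that would break down without them. By contrast, the passage from this ring-level collapse to the module-level vanishing is routine once the generation degree $\bar n$ is invoked.
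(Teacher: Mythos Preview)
Your argument is correct. The core computation is the same as the paper's: the relation $[b]^\ell[a]^\ell=[b^a]^\ell[a]^\ell$ from Lemma~\ref{lem:relationsB} together with the $B_\omega$-relation $[b]^\ell=[b^\omega]^\ell$ lets one conjugate $b$ by any element of $\langle\omega,a\rangle$, which is all of $G$ by largeness. The paper runs this at the module level, showing directly that $[b]^\ell x=[c]^\ell x$ whenever $x=[a]^\ell y$ with $y\in H_i(\Hur_{n-\ell}(Q)_\omega;R)$, and then invokes the definition of $\bar n$ to write every class in weight $\ge\bar n$ as a sum of such $x$. You instead lift the computation to the ring $B_\omega$ and extract the standalone fact that $B_\omega\to C$ is an isomorphism in every weight $\ge 2\ell$; the module statement then drops out by writing $x=\sum_k b_k m_k$ with $b_k$ of weight $\ge\ell$. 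This is a mild repackaging rather than a genuinely different route, but your ring-level formulation is arguably cleaner and isolates a reusable statement about $B_\omega$ that the paper does not make explicit.
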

\begin{proof}
 Let $n\ge\bar n$ and let $x\in H_i(\Hur_n(Q)_\omega;R)$ be a homology class of the form $[a]^\ell y$, for some $y\in H_i(\Hur_{n-\ell}(Q)_\omega;R)$; then for all $b\in \Q$ we have the following:
 \begin{itemize}
  \item $[b]^\ell x=[b^\omega]^\ell x$, by Proposition \ref{prop:HiBomegamodule};
  \item $[b]^\ell x=[b]^\ell[a]^\ell y=[b^a]^\ell[a]^\ell y=[b^a]^\ell x$, using the relations of $B$.
 \end{itemize}
 It follows that if $b,c\in Q$ can be obtained from another by a sequence of conjugations by $\omega^{\pm1}$ or $a^{\pm1}$, then $[b]^\ell x=[c]^\ell x$. Since $\omega$ is large, the group $\left<\omega,a\right>$ is the entire $G$, and by assumption $Q$ is a unique conjugacy class in $G$. We conclude for any homology class $x$ of the form $[a]^\ell y$ and for any $b,c\in Q$ we have $[b]^\ell x=[c]^\ell x$.
 
 The claim now follows from the definition of $\bar n$: for $n\ge\bar n$, every class in $H_i(\Hur_n(Q)_\omega;R)$ is a linear combination of classes of the form $[a]^\ell y$, for possibly different values of $a\in Q$ and $y\in H_i(\Hur_{n-\ell}(Q)_\omega;R)$.
\end{proof}
\begin{proof}[Proof of Theorem \ref{thm:main3} for $R$ Noetherian]
By Theorem \ref{thm:main2} $H_i(\Hur(Q)_\omega;R)$ is a finitely generated $A\otimes R$-module, and by Lemma \ref{lem:Anoetherian} we have that $A\otimes R$ is finitely generated over $B\otimes R$; it follows that $H_i(\Hur(Q)_\omega;R)$ is finitely generated over $B\otimes R$; since $B\otimes R$ is Noetherian, and since $H_i(\Hur_{\ge\bar n}(Q)_\omega;R)$ is a sub-$B\otimes R$-module of $H_i(\Hur(Q)_\omega;R)$, we obtain that also $H_i(\Hur_{\ge\bar n}(Q)_\omega;R)$ is finitely generated over $B\otimes R$. It is then a consequence of Lemma \ref{lem:HiCmodule} that $H_i(\Hur_{\ge\bar n}(Q)_\omega;R)$ is in fact a finitely generated $C\otimes R$-module.

Let now $a\in Q$; the ring $C\otimes R$ is isomorphic to the polynomial algebra over $R$ generated by $[a]^\ell$, and in particular also $C\otimes R$ is Noetherian. We conclude that $H_i(\Hur_{\ge\bar n}(Q)_\omega;R)$ is not only finitely generated, but also finitely presented. Choosing $\tilde n\ge\bar n$ such that $H_i(\Hur_{\ge\bar n}(Q)_\omega;R)$ admits a presentation in weights $\le\tilde n$, we obtain that for $n>\tilde n$ the multiplication $[a]^\ell\cdot-$ induces a bijection $H_i(\Hur_n(Q)_\omega;R)\cong H_i(\Hur_{n+\ell}(Q)_\omega;R)$; this multiplication coincides with $\lst(a)^\ell_*$.

We conclude by observing that, by Lemma \ref{lem:HiCmodule}, for $n\ge\bar n$ and for varying $a\in Q$, the stabilisation maps $\lst(a)_*^\ell\colon H_i(\Hur_n(Q)_\omega;R)\to H_i(\Hur_{n+\ell}(Q)_\omega;R)$ are equal to each other.
\end{proof}
We conclude the subsection by proving Theorem \ref{thm:main3} for a general ring $R$; so in the following proof we drop the hypothesis that $R$ is Noetherian.
\begin{proof}[Proof of Theorem \ref{thm:main3} in the general case]
Let $i\ge0$, let $G,Q,\omega$ as in the statement of Theorem \ref{thm:main3}, and let $a\in Q$. Then the proof of Theorem \ref{thm:main3} for integral homology has the following direct consequence: there is $\bar n\ge0$ such that for $n\le \bar n$ the map $\lst(a)^\ell\colon\Hur_n(Q)_\omega\to\Hur_{n+\ell}(Q)_\omega$ is an integral homology isomorphism in all homological degrees $\le i$; by the universal coefficient theorem for homology, it follows that for $n\ge\bar n$ the same map induces an isomorphism in $R$-homology in the same range of degrees, for any ring $R$.

In particular, the direct sum $\bigoplus_{n\le\bar n}H_i(\Hur_n(Q)_\omega;R)$ generates $H_i(\Hur(Q)_\omega;R)$ over $B\otimes R$; we can now repeat the argument of Lemma \ref{lem:HiCmodule}, and show that the sub-$B\otimes R$-module $\bigoplus_{n\ge\bar n+\ell}H_i(\Hur_n(Q)_\omega;R)$ is in fact a $C\otimes R$-module; this implies that for $n\ge\bar n+\ell$ the stabilisation map $\lst(a)^\ell_*\colon H_i(\Hur_n(Q)_\omega;R)\to H_i(\Hur_{n+\ell}(Q)_\omega;R)$ is independent of $a\in Q$.
\end{proof}

\subsection{Homology of the group completion}
We recall the group-completion theorem by McDuff and Segal \cite{McDuffSegal}
(see also \cite[Theorem Q.4]{FM94}).
\begin{thm}[Group-completion theorem]
\label{thm:groupcompletion}
Let $R$ be a ring and let $M$ be a topological monoid; suppose that the localisation $H_*(M;R)[\pi_0(M)^{-1}]$
can be constructed by right fractions. Then the canonical
map
\[
  H_*(M;R)[\pi_0(M)^{-1}]\to H_*(\Omega BM;R)
\]
is an isomorphism of rings.
\end{thm}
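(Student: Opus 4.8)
The plan is to follow the approach of McDuff and Segal and to realise both sides of the asserted isomorphism through a single auxiliary space, a mapping telescope. I would choose a sequence $m_1, m_2, \dots$ of elements of $M$ whose classes in $\pi_0(M)$ are cofinal for inverting $\pi_0(M)$ (for instance, arranging that every element of a fixed generating set of $\pi_0(M)$ occurs infinitely often among the $m_k$, so that each element of $\pi_0(M)$ eventually divides a partial product $[m_1\cdots m_k]$; the right--fractions hypothesis is what guarantees such a cofinal sequence exists), and then form the mapping telescope
\[
 M_\infty \;=\; \operatorname{hocolim}\bigl( M \xrightarrow{\,\cdot\, m_1\,} M \xrightarrow{\,\cdot\, m_2\,} M \xrightarrow{\,\cdot\, m_3\,} \cdots \bigr)
\]
under the right--translation maps. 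There are then two things to establish: first, that $H_*(M_\infty; R) \cong H_*(M; R)[\pi_0(M)^{-1}]$; and second, that $M_\infty$ is weakly homotopy equivalent to $\Omega BM$. Granting both, the composite is the desired isomorphism, and it remains only to check multiplicativity.

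The first point is essentially formal. Singular homology commutes with the filtered homotopy colimit defining $M_\infty$, so $H_*(M_\infty; R)$ is the colimit of the system of right--multiplication maps $H_*(M;R) \xrightarrow{\,\cdot [m_1]\,} H_*(M;R) \to \cdots$. By construction this colimit inverts each class $[m_k]$, hence every element of $\pi_0(M)$, and the standing hypothesis that $H_*(M;R)[\pi_0(M)^{-1}]$ may be constructed by right fractions is exactly what ensures that this naive telescope of right multiplications computes the two--sided localisation.

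The second point is the technical heart of the argument, and the step I expect to be the main obstacle. I would form the two--sided bar construction $B(M_\infty, M, *)$, with $M_\infty$ regarded as a right $M$--space, and consider the projection $p\colon B(M_\infty, M, *)\to B(*, M, *)=BM$. Since the bar construction commutes with the sequential homotopy colimit, $B(M_\infty, M, *)$ is the telescope of the spaces $B(M, M, *)$, each of which is contractible (it carries an extra degeneracy coming from the unit of $M$); hence the total space $B(M_\infty, M, *)$ is contractible, while the point--set fibre of $p$ over the basepoint is $M_\infty$. The crucial claim is that $p$ is a \emph{quasifibration}, and this is precisely where the McDuff--Segal quasifibration criterion for bar constructions must be invoked; here it applies because right multiplication by any element of $M$ acts on $M_\infty$ as a homotopy equivalence (a shift in the telescope), so the required homology--fibration condition is automatic. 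Once $p$ is known to be a quasifibration with contractible total space, its long exact sequence yields $M_\infty \simeq \Omega BM$.

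Combining the two points gives $H_*(\Omega BM; R) \cong H_*(M_\infty; R) \cong H_*(M;R)[\pi_0(M)^{-1}]$. Finally, I would verify that this identification carries the Pontryagin product on $H_*(\Omega BM;R)$ to the product on the localisation. This follows because all the maps involved---the canonical map $M\to\Omega BM$, the telescope structure maps, and the bar--construction projection---are compatible with the multiplicative structures, so that the resulting isomorphism is one of rings, as claimed. The delicate part throughout is the quasifibration criterion of the third paragraph; the homological and multiplicative bookkeeping is routine once that geometric input is in hand.
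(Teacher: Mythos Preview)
The paper does not give its own proof of this theorem: it is quoted as a known result of McDuff--Segal (with a reference also to Friedlander--Mazur), and is then applied as a black box. So there is no ``paper's proof'' to compare against.

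Your sketch is the standard McDuff--Segal argument and is correct in outline, but one point is overstated. The bar-construction projection $p\colon B(M_\infty,M,*)\to BM$ is in general only a \emph{homology fibration}, not a quasifibration: what one actually proves is that the comparison map from the point-set fibre $M_\infty$ to the homotopy fibre $\Omega BM$ is an $R$-homology equivalence, not a weak equivalence. This suffices for the stated conclusion, which is only about homology, but your intermediate claim ``$M_\infty\simeq\Omega BM$'' is stronger than what the argument delivers. Relatedly, your justification that right multiplication by an arbitrary $m\in M$ on $M_\infty$ is ``a shift in the telescope'' and hence a homotopy equivalence is not quite right: it is not literally a shift unless $m$ happens to be one of the chosen $m_k$, but it \emph{is} an $R$-homology equivalence, because $[m]$ becomes a unit in $H_*(M_\infty;R)\cong H_*(M;R)[\pi_0(M)^{-1}]$, and that is exactly the input the McDuff--Segal homology-fibration criterion requires.
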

Lemmas \ref{lem:Gtwist} and \ref{lem:lstarstag} imply that the multiplicative set $\pi_0(\Hur(Q))$ of the ring $H_*(\Hur(Q);R)$ satisfies the Ore condition; hence the localisation
\[
H_*(\Hur(Q);R)[\pi_0(\Hur(Q))^{-1}]
\]
can be constructed by right fractions, and Theorem \ref{thm:groupcompletion} is thus applicable to compute $H_*(\Omega B\Hur(Q))$.
\begin{proof}[Proof of Corollary \ref{cor:main3}]
For a fixed homological degree $i\ge0$, the localisation of the $A\otimes R$-module $H_i(\Hur(Q);R)$ at the multiplicative subset $\pi_0(\Hur(Q))\subset A\otimes R$ coincides with the homology $H_i(\Omega B\Hur(Q);R)$. This localisation can be constructed by right fractions; moreover the multiplicative set $\pi_0(\Hur(Q))\subset A\otimes R$ is generated multiplicatively by the finite set of elements $[a]=[a]\otimes 1\in A\otimes R$.

We define $\fw=\hat q_1^\ell\dots \hat q_m^\ell\in \pi_0^\ell(\Hur(Q))$, and denote by $[\fw^{-1}]\in B\otimes R\subset A\otimes R$ the corresponding generator; this allows us to identify the module localisation $H_i(\Hur(Q);R)[\pi_0(\Hur(Q))^{-1}]$ with $H_i(\Hur(Q);R)[\fw^-1]$. Taking the zero component $\Omega_0 B\Hur(Q)$, we can identify  $H_i(\Omega_0 B\Hur(Q);R)$ with the colimit of the following sequential diagram, where
$\alpha$ is the chosen element in $\pi_0(\Hur(Q)_{\omega})$ (but for the following diagram any other $\alpha'\in\pi_0(\Hur(Q))$ would work):
\[
\begin{tikzcd}
 H_i(\Hur_\alpha(Q);R)\ar[r,"{[\fw]\cdot-}"]& H_i(\Hur_{\fw\alpha}(Q);R)\ar[r,"{[\fw]\cdot-}"]&H_i(\Hur_{\fw^2\alpha}(Q);R)\ar[r,"{[\fw]\cdot-}"] &\dots.
\end{tikzcd}
\]
The map $[\fw]\cdot-\colon H_i(\Hur(Q)_\omega)\to H_i(\Hur(Q)_\omega)$ can be written as a composition $\lst(q_m)^\ell_*\circ\dots\circ\lst(q_1)^m_*$, and by Theorem \ref{thm:main3}, for $n$ large enough and for fixed $a\in Q$, each of the maps $\lst(q_i)^\ell_*\colon H_i(\Hur_n(Q)_\omega;R)\to H_i(\Hur_{n+\ell}(Q)_\omega;R)$ is an isomorphism and coincides with $\lst(a)^\ell_*$, where $a\in Q$ is our fixed element. It follows that the sequential diagram above can be regarded as an ``index-$m$'' subdiagram of the following diagram
\[
 \begin{tikzcd}
 H_i(\Hur_\alpha(Q);R)\ar[r,"{[a]^\ell\cdot-}"]& H_i(\Hur_{\hat a^\ell\alpha}(Q);R)\ar[r,"{[a]^\ell\cdot-}"]&H_i(\Hur_{\hat a^{2\ell}\alpha}(Q);R)\ar[r,"{[a]^\ell\cdot-}"] &\dots.
\end{tikzcd}
\]
The last diagram stabilises by Theorem \ref{thm:main3}, and its colimit is $H_i(\Omega_0 B\Hur(Q);R)$. 
\end{proof}


\bibliographystyle{amsalpha}
\bibliography{bibliography}
\end{document}